\documentclass[11pt,dvipsnames]{amsart}
\usepackage{amsmath,amssymb,amsfonts, mathtools}
\usepackage[mathscr]{eucal}
\usepackage[margin=1.5in]{geometry}
\usepackage{fullpage}

\usepackage{tikz, tikz-cd}%
\usepackage{adjustbox}

\usetikzlibrary{matrix,arrows,decorations.pathmorphing,cd,patterns,calc,backgrounds,patterns}
\tikzset{dummy/.style= {circle,fill,draw,inner sep=0pt,minimum size=1.2mm}}
\tikzset{vertex/.style={fill, circle, minimum size=.1cm, inner sep=0pt}}
\usetikzlibrary{decorations.pathreplacing, calc}
\usepackage{extarrows}

\usepackage{marvosym}
\usepackage{quiver} 
\usepackage{adjustbox}
\usepackage{graphicx,scalerel}

\usepackage[pagebackref, colorlinks, citecolor=DarkOrchid, linkcolor=PineGreen, urlcolor=NavyBlue]{hyperref}

\usepackage{comment}

\usepackage[textwidth=25mm, textsize=tiny]{todonotes}
\usepackage[final]{microtype}

\usepackage{enumerate}

\usepackage{appendix}

\numberwithin{equation}{section} 
\numberwithin{figure}{section}
\usepackage[nameinlink,capitalise,noabbrev]{cleveref}

\newcommand{\newrefformat}[2]{}

\usepackage{stmaryrd}%for mapsfrom

\newcommand\restr[2]{{% we make the whole thing an ordinary symbol
  \left.\kern-\nulldelimiterspace % automatically resize the bar with \right
  #1 % the function
  \vphantom{\big|} % pretend it's a little taller at normal size
  \right|_{#2} % this is the delimiter
  }}

\crefname{lemma}{Lemma}{Lemmas}
\crefname{theorem}{Theorem}{Theorems}
\crefname{definition}{Definition}{Definitions}
\crefname{proposition}{Proposition}{Propositions}
\crefname{remark}{Remark}{Remarks}
\crefname{observation}{Observation}{Observations}
\crefname{construction}{Construction}{Constructions}
\crefname{corollary}{Corollary}{Corollaries}
\crefname{question}{Question}{Questions}
\crefname{equation}{Equation}{Equations}
\crefname{construction}{Construction}{Constructions}
\crefname{ex}{Example}{Examples}
\crefname{appsec}{Appendix}{Appendices}
\crefname{subsection}{Subsection}{Subsections}
\Crefname{warning}{Warning}{Warnings}

\theoremstyle{plain}
\newtheorem{theorem}[equation]{Theorem}
\newtheorem{corollary}[equation]{Corollary}
\newtheorem{proposition}[equation]{Proposition}
\newtheorem{lemma}[equation]{Lemma}

\theoremstyle{definition}
\newtheorem{definition}[equation]{Definition}
\newtheorem{example}[equation]{Example}

\newtheorem{remark}[equation]{Remark}
\newtheorem{construction}[equation]{Construction}

\newtheorem{warning}[equation]{Warning}

\newtheorem*{notation}{Notation}

\crefname{introtheorem}{Theorem}{Theorems}

\usepackage{todonotes}

\DeclareMathOperator{\tr}{tr}

\DeclareMathOperator{\End}{End}
\DeclareMathOperator{\Aut}{Aut}
\DeclareMathOperator{\Hom}{Hom}

\DeclareMathOperator{\obj}{ob}
\DeclareMathOperator{\Mor}{Mor}
\DeclareMathOperator{\Proj}{Proj}

\DeclareMathOperator{\vol}{vol}
\def\CC{\mathcal{C}}
\def\DD{\mathcal{D}}
\def\EE{\mathcal{E}}

\def\PP{\mathcal{P}}
\def\RR{\mathcal{R}}

\def\bbR{\mathbb{R}}
\def\bbZ{\mathbb{Z}}
\def\wW{\mathcal{W}}

\def\gG{\mathcal{G}}
\def\mM{\mathcal{M}}

\tikzcdset{
  diagrams={>={Straight Barb[scale=0.5]}}
}
\tikzset{
  altstackar/.style={decorate, decoration={show path construction,
    lineto code={
      \path (\tikzinputsegmentfirst); \pgfgetlastxy{\xstart}{\ystart}
      \path (\tikzinputsegmentlast); \pgfgetlastxy{\xend}{\yend}
      \path ($(0,0)!1.5pt!(\ystart-\yend,\xend-\xstart)$); \pgfgetlastxy{\xperp}{\yperp}
      \foreach \n[evaluate=\n as \k using .5*#1-\n+.5] in {1,...,#1}{
        \ifodd\n{\draw[->, shorten <=2pt, shift={($\k*(\xperp,\yperp)$)}](\xstart,\ystart)--(\xend,\yend);}
        \else{\draw[<-, shorten >=2pt, shift={($\k*(\xperp,\yperp)$)}](\xstart,\ystart)--(\xend,\yend);}\fi
      }
    }
  }}, altstackar/.default={1}
}

\makeatletter
\def\makeCal#1{%
\expandafter\newcommand\csname c#1\endcsname{\mathcal{#1}}}
\def\makeBB#1{%
\expandafter\newcommand\csname b#1\endcsname{\mathbb{#1}}}
\def\makeFrak#1{%
\expandafter\newcommand\csname f#1\endcsname{\mathfrak{#1}}}
\def\makeScr#1{%
\expandafter\newcommand\csname s#1\endcsname{\mathscr{#1}}}

\count@=0
\loop
\advance\count@ 1
\edef\y{\@Alph\count@}%
\expandafter\makeCal\y
\expandafter\makeBB\y
\expandafter\makeFrak\y
\ifnum\count@<26
\repeat

\def\slashedarrowfill@#1#2#3#4#5{%
  $\m@th\thickmuskip0mu\medmuskip\thickmuskip\thinmuskip\thickmuskip
  \relax#5#1\mkern-7mu%
  \cleaders\hbox{$#5\mkern-2mu#2\mkern-2mu$}\hfill
  \mathclap{#3}\mathclap{#2}%
  \cleaders\hbox{$#5\mkern-2mu#2\mkern-2mu$}\hfill
  \mkern-7mu#4$%
}
\def\rightslashedarrowfill@{%
  \slashedarrowfill@\relbar\relbar\mapstochar\rightarrow}
\newcommand\xslashedrightarrow[2][]{%
  \ext@arrow 0055{\rightslashedarrowfill@}{#1}{#2}}
\makeatother

\AtEndDocument{%
\bibliographystyle{alpha}
\bibliography{./ref.bib}
}

\begin{document}

\author[Agarwal]{Sanjana Agarwal}
\address{Indiana University, Bloomington}
\email{sanjagar@iu.edu}

\author[Bilas]{Ramyak Bilas}
\address{Indiana University, Bloomington}
\email{rbilas@iu.edu}

\title[The Dennis trace for assembler $K$-theory]{The Dennis trace for assembler $K$-theory}

%%%%%%%%%%%%%%%%%%%%%%%%%%%%ABSTRACT%%%%%%%%%%%%%%%%%%%%%%%%
\begin{abstract}
We define a notion of Hochschild homology of a weak assembler and use it to construct a Dennis trace map from the group completion $K$-theory of a weak assembler to its Hochschild homology. We compute the Hochschild homology for some examples, in particular for various cases of restricted polytopes. We also give an explicit description of the Dennis trace map at level $0$, i.e. from the zeroth $K$-theory group to the zeroth Hochschild homology group. Further, we show that the Dennis trace is a refinement of the trace/regulator map defined by Bohmann et al thus showing that group homology is a trace invariant. 
\end{abstract}

\maketitle

\setcounter{tocdepth}{1}
\tableofcontents

%%%%%%%%%%%%%%%%%%%%%%%%%INTRO%%%%%%%%%%%%%%%%%%%%%%%%%%%%%%%%%
\section{Introduction}
The classical Dennis trace map for a ring $R$ is a map from its algebraic $K$-theory $K(R)$ to its Hochschild homology $HH(R)$. The machinery of trace methods is built upon refinements of Dennis trace map and has been among the most successful methods towards understanding and computing algebraic $K$-theory for last four decades.

In her work \cite{zakharevich2012scissors, ZAKHAREVICH20171176}, Zakharevich defined an analogous notion of $K$-theory in contexts that are more combinatorial, for example for polytopes. Similar to classical algebraic $K$-theory, the combinatorial $K$-theory is `universal' and thus extremely helpful towards understanding scissors congruence invariants of polytopes - the content of the generalized Hilbert's third problem. To be more precise: 
\begin{definition}
    An \textit{n-simplex} in an $n$-dimensional $X$, where $X$ is  one of the Euclidean $\mathcal{E}^n$, Hyperbolic $\mathcal{H}^n$ or Spherical geometry $\mathcal{S}^n$, is the convex hull of $n+1$ points in general position in $X$. A \textit{polytope} in $X$ is a finite union of simplices in $X$.
\end{definition}
Consider the category of polytopes as follows:
\begin{definition}\label{def: category of polytopes}
   Let $\PP_G^X$ denote the category whose objects are polytopes in $X$. Here $G$ is any subgroup of the full isometry group of $X$. Given two objects $P$ and $Q$, the morphisms from $P$ to $Q$ in $\PP_G^X$ are maps $P \xrightarrow[]{g}Q$ for $g\in G$, i.e. morphisms first send $P$ to $gP$ and then include $gP$ as a set into $Q$. 
\end{definition}
Given a polytope, it is possible to cover it with smaller polytopes $\{P_i\}_{i\in I}$ whose interiors are disjoint. The scissors congruence of polytopes, then, is defined as follows:
\begin{definition}
    The \textit{scissors congruence} is an equivalence relation on the objects of $\PP_G^X$. $P, Q \in \obj\PP_G^X$ are scissors congruent, $P \sim Q$, if there exist covers $\{P_i\}_{i\in I}$ and $\{Q_i\}_{i\in I}$ of $P$ and $Q$, respectively, along with $\{g_i \in G \ \vert \ i\in I\}$ such that $g_iP_i = Q_i$ for all $i\in I$.
\end{definition}
An invariant $\mathcal{I}$ of scissors congruence is a map to an abelian group $G$
$$\mathcal{I}: \obj\PP_G^X \to G$$
such that if $P \sim Q$, then $\mathcal{I}(P) = \mathcal{I}(Q)$.
The generalized Hilbert's problem asks for a full classification of all scissors congruence invariants given $n, X, G$. The problem is completely understood for Euclidean geometry in dimensions less than $5$ but remains open in other dimensions and for other geometries in dimensions above $2$ \cite{Sydler1965, Jessen1968}. 

Classically algebraic $K$-theory space (the infinite loop space associated to the $K$-theory spectrum) of a ring $R$ is the group completion of the topological monoid $|\coprod_{[P]} B\Aut[P]|$ where $[P]$ ranges over the isomorphism classes of finitely generated projective $R$-modules. Similarly, for polytopes, the $K$-theory space is constructed as the group completion of the topological monoid $|\coprod_{[P]} B\Aut[P]|$ where $[P]$ ranges over isomorphism classes of polytopes in the category $\gG(\PP_G^X)$ which we define in \cref{Def:G(C)} \cite{KupersLemannMalkiewichMillerSroka2024ScissorsAuto}. It is an infinite loop space whose zeroth homotopy group $K_0(\PP_G^X)$, is the classical scissors congruence group and the first homotopy group $K_1(\PP_G^X)$, is the abelianization of the scissors automorphism group, i.e. $K_1(\PP_G^X)$ captures information related to ways in which a polytope can be scissors congruent to itself \cite{ZAKHAREVICH20171176, KupersLemannMalkiewichMillerSroka2024ScissorsAuto}. In recent years, there has been a lot of progress towards better understanding various combinatorial $K$-theories, however, the machinery of trace methods for combinatorial $K$-theories is still not developed.

In this article, we construct a Dennis trace map for polytopes (more generally for any weak assembler (\cref{def:Assemblers})):
\begin{theorem}[\cref{const: Dennis trace map}]
    There exists a Dennis trace map
    \[DTr: K(\PP_G^X) \to HH(\PP_G^X).\]
\end{theorem}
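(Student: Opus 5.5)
We will build the trace by mimicking the classical construction of the Dennis trace for a ring. There it is assembled from the inclusions $B\mathrm{Aut}(P)\to$ cyclic nerve, with $K(R)$ obtained by group completing $\coprod_{[P]}B\mathrm{Aut}(P)$. Concretely, for a weak assembler $\cC$ (here $\PP_G^X$) we first define its Hochschild homology as the realization of a cyclic bar construction. Let $\gG(\cC)$ be the category of \cref{Def:G(C)}, whose objects are finite tuples of objects of $\cC$ and whose morphisms are the ``piecewise'' isomorphisms. It is symmetric monoidal under disjoint union. We set
\[
HH(\cC) \;:=\; \bigl|[q]\mapsto \textstyle\coprod_{(A_0,\dots,A_q)} \Hom(A_1,A_0)\times \Hom(A_2,A_1)\times\cdots\times\Hom(A_0,A_q)\bigr|,
\]
taken on the isomorphism groupoid of $\gG(\cC)$. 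This is the cyclic nerve $N^{cy}$ with the usual faces given by composition, the last face composing cyclically. Disjoint union makes $N^{cy}\gG(\cC)$ a simplicial $E_\infty$-monoid, so we group complete it and call the resulting spectrum $HH(\cC)$, i.e.\ $HH(\PP_G^X)$.

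Next we construct the map on the monoid level. For each object $A$ we have $B\mathrm{Aut}(A)\simeq$ the cyclic bar construction of $\mathrm{Aut}(A)$ restricted to the ``constant'' part. The standard map $B_qG\to N^{cy}_qG$, $(g_1,\dots,g_q)\mapsto ((g_1\cdots g_q)^{-1},g_1,\dots,g_q)$, is simplicial. It is natural in group homomorphisms and compatible with conjugation, so it assembles into a simplicial map
\[
\textstyle\coprod_{[A]} B\mathrm{Aut}(A)\;\simeq\; N\,\mathrm{iso}\gG(\cC)\longrightarrow N^{cy}\,\mathrm{iso}\gG(\cC).
\]
This is just the inclusion of the nerve of a groupoid into its cyclic nerve via the free loop space $L B\gG \simeq N^{cy}$, taking a point to a constant loop. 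The map is strictly compatible with disjoint union, since all formulas are componentwise. Hence it is a map of $E_\infty$-spaces, or more concretely a map of $\Gamma$-spaces using Segal's construction on both sides.

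Finally we apply group completion, which is functorial for maps of $E_\infty$-spaces. This gives $DTr\colon K(\cC)=\Omega B\bigl(\coprod B\mathrm{Aut}\bigr)\to \Omega B(N^{cy})=HH(\cC)$, and we invoke the identification of $K(\PP_G^X)$ with this group completion from \cite{KupersLemannMalkiewichMillerSroka2024ScissorsAuto}.

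The main obstacle is verifying that the disjoint-union structure is genuinely coherent on the cyclic nerve, so that we get a $\Gamma$-space and not merely an $H$-space. The difficulty is that composition in $\gG(\cC)$ involves refinements of covers. To handle this, we will first try to work with Segal's $\Gamma$-category $n\mapsto$ ($n$-tuples of disjoint objects), with $N^{cy}$ applied levelwise. We will then check the Segal condition using the fact that the cyclic nerve of a product of groupoids is the product of the cyclic nerves.
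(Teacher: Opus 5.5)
\textbf{There is a genuine gap: your target is not Hochschild homology.} In the paper, $HH(\PP_G^X)$ means the Hochschild complex of an $Ab$-enriched category, namely $HH(\bbZ\mM(\PP_G^X))$. What you call $HH(\CC)$ is instead $\Omega B|N^{cy}\gG(\CC)|$, the group completion of the cyclic nerve of the groupoid $\gG(\CC)$. This is a nonlinear object, with $|N^{cy}\gG(\CC)|\simeq L|B\gG(\CC)|$.

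The map you build into it carries no information beyond $K(\CC)$. The simplicial map $N^{cy}\gG(\CC)\to N\gG(\CC)$ that forgets the closing morphism is a left inverse of $cyc$. It is also strictly compatible with disjoint union. So after group completion your $DTr$ is a split injection of $K(\CC)$: the constant loops, as you note yourself. Consequently:
\begin{itemize}
\item the factorization of the regulator of \cite{bohmann2024trace} through your map would hold automatically for every map out of $K(\CC)$;
\item nothing like \cref{thm: Dennis trace at level 0}, where $[Q]$ goes to $\sum_i (P\hookleftarrow Q_i\hookrightarrow P)$ in $HH_0$ of an endomorphism ring, can even be formulated.
\end{itemize}
The $\Gamma$-space coherence you single out as the main obstacle is therefore not the real issue.

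\textbf{What is missing is the linearization step.} This is the analogue of map 2 in the classical sequence $B\Aut P\to B^{cyc}\Aut P\to HH(\End P)$. You need an $Ab$-enriched analogue of $\Proj_R$, with additive Hom-groups and non-invertible morphisms, into whose Hochschild complex $B^{cyc}\gG(\CC)$ maps. The groupoid $\gG(\CC)$ has neither feature. Merely taking free abelian groups, $\bbZ[N^{cy}\gG(\CC)]$, does not fix this: for $\mathbf{FinSet}$ it gives essentially $\bigoplus_n HH(\bbZ[\Sigma_n])$ rather than $HH(\bbZ)$.

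The paper builds $\bbZ\mM(\CC)$ in \cref{def: category of scissors correspondences}, in three moves:
\begin{itemize}
\item morphisms are spans of \emph{partial} covers up to refinement, which supplies inclusions, projections and non-invertible endomorphisms;
\item the Hom-sets are freely $\bbZ$-linearized, and spans through $\emptyset$ are set to $0$;
\item a relation is imposed forcing $\Hom(PQ,R)\cong\Hom(P,R)\oplus\Hom(Q,R)$.
\end{itemize}
Then $DTr$ is the composite $B\gG(\CC)\xrightarrow{cyc}B^{cyc}\gG(\CC)\to B^{cyc}\bbZ\mM(\CC)\to HH(\bbZ\mM(\CC))$, followed by group completion. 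Since the target is a simplicial abelian group, it is already group-like, so no $\Gamma$-structure on a cyclic nerve is needed. Morita invariance, using a Morita object $P$, then identifies the target with $HH(\Hom_{\bbZ\mM(\CC)}(P,P))$.
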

 Here $HH(\PP_G^X)$ is defined as the Hochschild homology of an $Ab$-enriched category we construct associated to $\PP_G^X$. This category which we denote by $\bbZ\mM(\PP_G^X)$ (\cref{def: category of scissors correspondences}) is an analogue of the category of finitely generated projective modules, $\Proj_R$, for polytopes. In classical algebraic $K$-theory, $K(R)$ is defined using the automorphisms of the category $\Proj_R$ and the codomain of Dennis trace map is defined using the endomorphisms of this category. Using essentially the fact that $R$ (seen as a single object category) and $\Proj_R$ are Morita equivalent, one can then rewrite the codomain of Dennis trace map, equivalently, as Hochschild homology of $R$, $HH(R)$. We show something similar happens for polytopes. Let $G$ be the full isometry group, and $P$ any choice of polytope, we have
 \begin{theorem}[\cref{prop: Morita equiv between ZM and ZP}]
     The inclusion of the full subcategory generated by a non-empty polytope $P$, denoted as $\bbZ P$, in $\bbZ\mM(\PP_G^X)$ gives a Morita equivalence between $\bbZ P$ and $\bbZ\mM(\PP_G^X)$.
 \end{theorem}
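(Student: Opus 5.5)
The plan is to use the standard criterion for Morita equivalence of $Ab$-enriched (additive) categories. If $\iota\colon \bbZ P \hookrightarrow \bbZ\mM(\PP_G^X)$ is a fully faithful inclusion, it is a Morita equivalence as soon as every object of $\bbZ\mM(\PP_G^X)$ is a retract of a finite direct sum of copies of $P$. Equivalently, the representable presheaves $\Hom(-,Q)$, restricted to $\bbZ P$, must be finitely generated projective modules over $\End(P)$. This criterion also gives an isomorphism $HH(\bbZ P)\simeq HH(\bbZ\mM(\PP_G^X))$. Full faithfulness is automatic, since $\bbZ P$ is defined as a full subcategory. So the work is to show that every object is dominated by $P$.

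The key steps, in order, are as follows.

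First, I will recall the structure of $\bbZ\mM(\PP_G^X)$ from \cref{def: category of scissors correspondences}. Morphisms $Q\to Q'$ are formal $\bbZ$-combinations of scissors correspondences, that is, spans $Q \leftarrow \coprod_i Q_i \to Q'$ of finite covers followed by isometric embeddings. I expect disjoint union of polytopes to behave like a direct sum up to isomorphism. At minimum, a cover $\{Q_i\}$ of $Q$ should give maps $Q_i\to Q\to Q_i$ realizing $Q$ as a direct sum of the $Q_i$ in the additive closure.

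Second, I will show that every nonempty polytope $Q$ is a retract of $P$ (or of $P^{\oplus n}$) in $\bbZ\mM$. Because $G$ is the full isometry group, I can subdivide $Q$ into simplices and each simplex into pieces small enough to be moved isometrically into the interior of $P$. More carefully, I cut $Q$ into finitely many pieces $Q_1,\dots,Q_n$, each congruent to a subpolytope of $P$. This gives a correspondence $s\colon Q\to P^{\oplus n}$, sending $Q_i$ into the $i$-th copy of $P$, and $r\colon P^{\oplus n}\to Q$, the reverse span restricted to the images $g_iQ_i\subseteq P$. Then $r\circ s=\mathrm{id}_Q$, because composing spans intersects pieces and recovers the cover $\{Q_i\}$ of $Q$, which is identified with $\mathrm{id}_Q$. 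One could even avoid direct sums by placing disjoint shrunken copies inside $P$, but scaling is not an isometry, so I will use $P^{\oplus n}$. The empty polytope is the zero object and is trivially a retract.

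Third, I will conclude via the criterion: the Yoneda restriction functor is fully faithful on representables and lands in finitely generated projectives. Since $P$ itself is in the image, the functor is essentially surjective onto them, so the module categories are equivalent. Hence $\bbZ P$ and $\bbZ\mM(\PP_G^X)$ are Morita equivalent.

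The main obstacle is the second step, specifically verifying that $r\circ s=\mathrm{id}_Q$ in $\bbZ\mM$. This requires knowing exactly how composition of scissors correspondences is defined, and that refinement of covers (up to measure-zero boundaries) is identified with the identity in the quotient defining the hom-groups. If the definition does not directly include such identifications, I would first try to prove a lemma that any cover of $Q$, viewed as an endo-correspondence, equals $\mathrm{id}_Q$. I would also need to check that the hom-groups are additive, so that $P^{\oplus n}$ makes sense, or else pass to the additive closure, which does not change Hochschild homology or Morita class.
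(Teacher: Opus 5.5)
Your proposal is correct and follows essentially the same route as the paper. Both arguments show that every object is a retract of some $P^n$ and that $P^n$ lies in the thick closure of $P$, because concatenation is a biproduct (the paper does this by induction with split short exact sequences $P^n\to P^{n+1}\to P$). The only difference is that the paper gets the inclusion $Q\to P^n$ from the weak EA axioms (\cref{ex: Morita object in Polytopes}, \cref{lemma: weak EA assembler has Morita}), whereas you give a direct subdivide-and-move argument using the full isometry group.

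The two obstacles you flag are built into the definitions. A span $Q\xleftarrow{f}Q'\xrightarrow{f}Q$ with equal legs equals $\mathrm{id}_Q$ by equivalence up to refinement (\cref{rem: projection is retract}). Additivity of hom-groups on concatenations is exactly relation (3) of \cref{def: category of scissors correspondences} (see \cref{rem: Hom of concatenations}).

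One small wording slip: the condition that $\Hom(-,Q)$ restricted to $\bbZ P$ is finitely generated projective is not equivalent to $Q$ being a retract of $P^{n}$. The retract condition is the one you actually use, and it is the right one.
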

 The result holds more generally whenever you have a weak assembler with a Morita object $P$ (\cref{def: ZP}). In particular, this holds for any weak assembler which satisfies $EA$ axioms (\cref{def: EA weak assembler}) but also for various restricted polytope weak assemblers that do not satisfy the $EA$ axioms (\cref{ex:Restricted_polytope}, \cref{ex: Morita object in Restricted Polytopes}).

 Using Morita invariance of Hochschild homology (\cref{thm: Morita_Invariance_Hochschild_homology}) and the definition of Hochschild complex for $\bbZ P$ (\cref{lemma: Hochschild homology of weak assembler}), this allows us to write the image of the Dennis trace map as follows:
 \begin{center}
\fbox{%
  \begin{minipage}{0.9\linewidth}
    \begin{equation}
      DTr: K(\PP_G^X) \to HH(R)
    \end{equation}
  \end{minipage}%
}
\end{center}
where $R$ is the ring of endomorphisms of $P$ in $\bbZ\mM(\PP_G^X)$, i.e. $\Hom_{\bbZ\mM(\PP_G^X)}(P,P)$. 

We also do various computations of Hochschild homology for various restricted polytopes:
\begin{lemma}[\cref{ex: HH of P_n}, \cref{ex: HH of P_mm}, \cref{ex: HH of P_01}]
  Hochschild homology of various restricted polytopes $\PP^{[0,1]}_1$, $\PP^{[0,1]}_{C_2}$, $\PP^n_{\bbZ\rtimes C_2}$ and $\PP^{m,m}_{\bbZ^2\rtimes D_4}$ can be fully computed.  
\end{lemma}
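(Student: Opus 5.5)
The plan is to reduce each computation to the Hochschild homology of an explicit ring, using the Morita equivalence of \cref{prop: Morita equiv between ZM and ZP}. For each of the four restricted polytope weak assemblers I first exhibit a Morita object $P$: the unit interval for $\PP^{[0,1]}_1$ and $\PP^{[0,1]}_{C_2}$, a unit segment for $\PP^n_{\bbZ\rtimes C_2}$, and a unit square for $\PP^{m,m}_{\bbZ^2\rtimes D_4}$, as in \cref{ex: Morita object in Restricted Polytopes}. By \cref{thm: Morita_Invariance_Hochschild_homology}, $HH(\PP)\cong HH(R)$ with $R=\Hom_{\bbZ\mM(\PP)}(P,P)$, and \cref{lemma: Hochschild homology of weak assembler} then gives the Hochschild complex of $R$ directly. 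So the work is to identify $R$ in each case and then compute $HH_*(R)$ with standard tools.

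\textbf{Step 1: identify $R$.} The endomorphisms of $P$ in the category of scissors correspondences are $\bbZ$-linear combinations of equivalence classes of pairs of covers of $P$ together with piece-by-piece identifications by elements of $G$. In the restricted settings, every cover refines to one by the atomic pieces: unit intervals or unit squares, possibly cut into a fixed number of pieces. Refining along a common subdivision, I expect each correspondence to reduce to a single group element acting on a single piece, modulo the relations imposed by refinement.
\begin{itemize}
\item For $\PP^{[0,1]}_1$ (trivial group), I expect $R\cong\bbZ$.
\item For $\PP^{[0,1]}_{C_2}$, I expect the group ring $\bbZ[C_2]$.
\item For $\PP^n_{\bbZ\rtimes C_2}$ and $\PP^{m,m}_{\bbZ^2\rtimes D_4}$, the translations move a unit cell to other cells, so the self-correspondences of a chosen cell should be governed by the stabilizer-type data. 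This gives a group ring $\bbZ[H]$ for the relevant finite group $H$, namely $C_2$ or $D_4$, possibly twisted into a matrix ring over it.
\end{itemize}
A matrix ring is harmless, because it is Morita equivalent to $\bbZ[H]$.

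\textbf{Step 2: compute.} For a group ring I use the standard decomposition
\[
HH_*(\bbZ[H])\cong\bigoplus_{[h]} H_*\bigl(C_H(h);\bbZ\bigr),
\]
where $[h]$ runs over conjugacy classes of $H$ and $C_H(h)$ is the centralizer of $h$. This yields:
\begin{itemize}
\item $HH_*(\bbZ)=\bbZ$, concentrated in degree $0$.
\item For $C_2$: two copies of $H_*(C_2;\bbZ)$, that is, $\bbZ$ in degree $0$ and $\bbZ/2$ in each odd degree, doubled.
\item For $D_4$: a sum over its five conjugacy classes. The centralizers are $D_4$ (twice), $C_4$, and $C_2\times C_2$ (twice), and their integral homology is known in closed form.
\end{itemize}
This fits the abstract's theme that group homology is a trace invariant.

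\textbf{Main obstacle.} I expect Step 1 to be the hardest part: checking that $R$ really is the claimed group ring, or matrix ring over it, with no additional relations or additional generators. The issues are, first, that composition of correspondences is defined via pullback or common refinement; second, that the equivalence relation on covers could identify different group elements when pieces are cut; and third, that for the $\bbZ$ and $\bbZ^2$ translation parts the unit cell might acquire extra endomorphisms from nontrivial translations composed with cuts. My first approach is to construct explicit mutually inverse ring maps $\bbZ[H]\to R$ and $R\to\bbZ[H]$. The second map would send a correspondence to the sum of the group elements labelling its pieces, with each summand read as an element of $H$ after normalizing by translations. I would then verify that this assignment is invariant under refinement, which should be exactly the content of the restriction on allowed polytopes.
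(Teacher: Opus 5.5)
\textbf{The gap is in Step 1, for the two cases over $[0,1]$.} Your treatment of $\PP^n_{\bbZ\rtimes C_2}$ and $\PP^{m,m}_{\bbZ^2\rtimes D_4}$ matches the paper. There the Morita object is a minimal cell $[0,1/n]$, resp.\ $I_m\times I_m$, and any nonempty restricted subpolytope of a cell is the whole cell. So translations normalize the middle object, the stabilizer remains, and the endomorphism ring is $\bbZ[C_2]$, resp.\ $\bbZ[D_4]$; no matrix twist is needed. The conjugacy-class formula of \cref{lemma: HH of group rings} then finishes.

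The heuristic breaks for $\PP^{[0,1]}_1$ and $\PP^{[0,1]}_{C_2}$, and your answers there ($\bbZ$ and $HH_*(\bbZ[C_2])$) are wrong. Two things go wrong:
\begin{itemize}
\item Endomorphisms in $\mM(\CC)$ are spans of \emph{partial} covers, not of covers. For every nonempty finite union of closed intervals $D\subseteq[0,1]$, the span $[0,1]\hookleftarrow D\hookrightarrow[0,1]$ is a nonzero idempotent $e_D$. Two such spans are equivalent only if the middle objects have the same underlying set, since equivalence requires covers $S\to D$ and $S\to D'$. So the $e_D$ are linearly independent.
\item Here $\mathcal{L}$ consists of all points of $[0,1]$, so there are no atomic cells to refine to.
\end{itemize}
Your proposed inverse map $R\to\bbZ[H]$ is not even multiplicative: $e_{[0,1/2]}e_{[1/2,1]}=0$, because the intersection $\{1/2\}$ is not a polytope, yet both factors go to $1$. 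The correct rings are these:
\begin{itemize}
\item For $\PP^{[0,1]}_1$: the monoid ring $\bbZ[M]$ of the semilattice $M$ of such $D$ under intersection. This is commutative, so already $HH_0=\bbZ[M]\neq\bbZ$.
\item For $\PP^{[0,1]}_{C_2}$: the ring $\bbZ[M]_e\oplus\bbZ[M]_u$ with $(D_1,\phi_1)(D_2,\phi_2)=(D_1\cap\phi_1(D_2),\phi_1\phi_2)$.
\end{itemize}

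\textbf{These rings need tools beyond the group-ring decomposition.} For $\bbZ[M]$, the paper writes it as a filtered colimit of $\bbZ[S]$ over finite sub-semilattices $S$. By Solomon's theorem each $\bbZ[S]$ is a finite product of copies of $\bbZ$. Since $HH$ commutes with filtered colimits, $HH_*(\bbZ[M])$ is $\bbZ[M]$ concentrated in degree $0$.

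For the $C_2$ case, the ring is strongly $C_2$-graded, so the paper applies Lorenz's spectral sequence (\cref{lemma: HH of graded rings}). It collapses to the row $q=0$. The only nontrivial input is $HH_0(\bbZ[M]_e,\bbZ[M]_u)\cong\bbZ[M^{C_2}]$, obtained via $[X]\mapsto[X\cap u(X)]$. The result is
\begin{itemize}
\item $HH_0=\bbZ[M/C_2]\oplus\bbZ[M^{C_2}]$,
\item $(\bbZ/2)[M^{C_2}]^{\oplus 2}$ in odd degrees,
\item $0$ in positive even degrees.
\end{itemize}
None of this is reachable by identifying the endomorphism ring with a group ring.
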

Further, we identify explicitly the image of the Dennis trace map at level $0$, i.e. the image of the map from $K_0 \to HH_0$:
\begin{theorem}[\cref{thm: Dennis trace at level 0}]
For a choice of non-empty polytope $P$, let $[Q]$ be a scissors congruence class, i.e. an element of $K_0(\PP_G^X)$, and $n$ be the smallest positive integer such that there is an inclusion $Q \to P^n$ in $\bbZ\mM(\PP_G^X)$ as in \cref{def: ZP}: 
% https://q.uiver.app/#q=WzAsMyxbMCwxLCJRIl0sWzIsMSwiUF57bn0iXSxbMSwwLCJRXzFRXzJcXGNkb3RzIFFfbiJdLFsyLDBdLFsyLDEsIlxccHNpXzFcXHBzaV8yXFxjZG90cyBcXHBzaV9uIiwxLHsic3R5bGUiOnsidGFpbCI6eyJuYW1lIjoiaG9vayIsInNpZGUiOiJ0b3AifX19XV0=
\[\begin{tikzcd}
	& {Q_1Q_2\cdots Q_n} & \\
	Q && {P^{n}}
	\arrow[from=1-2, to=2-1]
	\arrow["{\psi_1\psi_2\cdots \psi_n}"{description}, hook, from=1-2, to=2-3]
\end{tikzcd}\]
with $Q_i$ mapping into the $i$th copy of $P$ via a partial cover $\psi_i$. Then the image of $[Q]$ under Dennis trace is the class of the following element in $HH_0(R)$:
% https://q.uiver.app/#q=WzAsMyxbMSwwLCJRXzEiXSxbMCwxLCJQIl0sWzIsMSwiUCJdLFswLDEsIlxccHNpXzEiLDIseyJzdHlsZSI6eyJ0YWlsIjp7Im5hbWUiOiJob29rIiwic2lkZSI6ImJvdHRvbSJ9fX1dLFswLDIsIlxccHNpXzEiLDAseyJzdHlsZSI6eyJ0YWlsIjp7Im5hbWUiOiJob29rIiwic2lkZSI6InRvcCJ9fX1dXQ==
\[
\begin{tikzcd}
	& {Q_1} & \\
	P && P
	\arrow["{\psi_1}"', hook', from=1-2, to=2-1]
	\arrow["{\psi_1}", hook, from=1-2, to=2-3]
\end{tikzcd}
+
\begin{tikzcd}
	& {Q_2} & \\
	P && P
	\arrow["{\psi_2}"', hook', from=1-2, to=2-1]
	\arrow["{\psi_2}", hook, from=1-2, to=2-3]
\end{tikzcd}
+ 
\ \cdots \
+
\begin{tikzcd}
	& {Q_n} & \\
	P && P.
	\arrow["{\psi_n}"', hook', from=1-2, to=2-1]
	\arrow["{\psi_n}", hook, from=1-2, to=2-3]
\end{tikzcd}
\]
\end{theorem}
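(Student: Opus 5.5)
The plan is to trace $[Q]$ through the three maps whose composite is the level-$0$ Dennis trace: the trace itself, then the Morita equivalence of \cref{prop: Morita equiv between ZM and ZP}, then the identification of the Hochschild complex of $\bbZ P$ from \cref{lemma: Hochschild homology of weak assembler}.

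\emph{Step 1: the trace of $[Q]$.} At level $0$, the Dennis trace of \cref{const: Dennis trace map} sends the class of an object $Q$ to the class of its identity endomorphism $\mathrm{id}_Q \in \Hom_{\bbZ\mM}(Q,Q)$. This lands in $HH_0(\bbZ\mM(\PP_G^X))$, which is the quotient of $\bigoplus_X \Hom(X,X)$ by the relations $fg - gf$. This is the standard fact that a point of $B\Aut(Q)$ maps to the $0$-simplex of the cyclic nerve given by the identity. I would check it directly from the construction: the vertex of the nerve corresponding to $Q$ goes to the Hochschild $0$-chain $\mathrm{id}_Q$.

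\emph{Step 2: transport to $\bbZ P$.} The inclusion $\bbZ P \to \bbZ\mM$ induces an isomorphism on $HH_0$ by \cref{thm: Morita_Invariance_Hochschild_homology}. So it suffices to find an element of $\bigoplus \End(P)$ in $\bbZ P$ that is homologous to $\mathrm{id}_Q$ in $HH_0(\bbZ\mM)$. For this I would use the given inclusion data:
\begin{itemize}
\item $i\colon Q \to P^n$, the correspondence $Q \leftarrow Q_1\cdots Q_n \hookrightarrow P^n$;
\item $r\colon P^n \to Q$, the reversed correspondence.
\end{itemize}
I would then verify, by composing correspondences, that $r\circ i = \mathrm{id}_Q$. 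This should hold because the $Q_i$ form a cover of $Q$ and the $\psi_i$ are injective onto their images, so the pullback is the diagonal. The trace relation then gives
\[
[\mathrm{id}_Q] = [r\circ i] = [i\circ r] \quad \text{in } HH_0,
\]
and $i\circ r$ is an endomorphism of $P^n$.

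\emph{Step 3: split $i\circ r$ into $\bbZ P$-pieces.} The object $P^n$ is the formal sum of $n$ copies of $P$, with inclusions $\iota_j$ and projections $\pi_j$ satisfying $\sum_j \iota_j\pi_j = \mathrm{id}$. This is the biproduct structure on the additive completion implicit in \cref{def: ZP}. Using the relation $fg=gf$ in $HH_0$ once more,
\[
[i\circ r] = \sum_{j,k} [\iota_j\pi_j\, i r\, \iota_k\pi_k] = \sum_{j} [\pi_j\, i r\, \iota_j].
\]
Each $\pi_j\, i r\, \iota_j$ is an endomorphism of $P$. Computing the composite of correspondences, it is the span $P \xleftarrow{\psi_j} Q_j \xrightarrow{\psi_j} P$. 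The off-diagonal terms $j\neq k$ drop out because the traces of the cross terms cancel under cyclicity. Summing over $j$ gives exactly the displayed element.

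\emph{Main obstacle.} I expect the hard step to be the bookkeeping in $\bbZ\mM$: confirming that $P^n$ really is a biproduct there (or handling the fact that $HH_0$ is additive over formal sums without one), and that composing spans via pullbacks in the weak assembler gives $r\circ i=\mathrm{id}_Q$ and the stated diagonal pieces. I would first try to compute pullbacks of covers explicitly in \cref{def: category of scissors correspondences}, using that distinct pieces of a cover have disjoint interiors and therefore contribute nothing. Minimality of $n$ is not needed; any such inclusion gives the same class.
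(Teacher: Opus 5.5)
Your proof is correct, but it is organized differently from the paper's.

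\textbf{The paper's route.} It splits on the $Q$ side first. Using the isomorphism $f\colon Q_1\cdots Q_n\to Q$ and the relation $[fg]=[gf]$, it shows $[\mathrm{id}_Q]=[\mathrm{id}_{Q_1\cdots Q_n}]$ in $HH_0(\bbZ\mM)$. It then uses relation (3) of \cref{def: category of scissors correspondences} to rewrite this class as $\sum_i[\mathrm{id}_{Q_i}]$. Only after that does it move each summand separately into $\Hom(P,P)$, by feeding the explicit element $\sum_i(P\xhookleftarrow{\psi_i}Q_i\xrightarrow{id}Q_i)\otimes(Q_i\xleftarrow{id}Q_i\xhookrightarrow{\psi_i}P)$ through the level-$0$ Morita zigzag.

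\textbf{Your route.} You run the classical ``trace of an idempotent on a free module'' argument. A single use of cyclicity on the retraction pair $r\circ i=\mathrm{id}_Q$ (this is \cref{rem: projection is retract}) moves everything to $\End(P^n)$ at once. The splitting then happens on the $P^n$ side, via its biproduct structure.

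\textbf{Comparison.} Both proofs use the same ingredients: cyclicity in $HH_0$, relation (3), and computing composites of spans through a common partial cover. Your version mirrors the ring case more closely and makes it clear that minimality of $n$ plays no role. The paper's version instead exhibits the Morita zigzag concretely. Inverting the inclusion-induced isomorphism on $HH_0$, as you do, is legitimate: the equivalence in \cref{thm: Morita_Invariance_Hochschild_homology} is compatible with the map induced by the inclusion, which is the rightmost vertical arrow in its proof.

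\textbf{Two small corrections.}
\begin{enumerate}
\item The identities $\sum_j\iota_j\pi_j=\mathrm{id}_{P^n}$ and $\pi_k\iota_j=\delta_{jk}\,\mathrm{id}_P$ are not implicit in \cref{def: ZP}. The first comes from relation (3) in the construction of $\bbZ\mM$. The second comes from the zero-span relation there, because distinct coordinates of $P^n$ refine to $\emptyset$.
\item The off-diagonal terms do not ``cancel.'' After cyclic rotation, each one contains the factor $\pi_k\iota_j=0$. You can avoid the double sum entirely by writing $[ir]=\sum_j[\iota_j(\pi_j ir)]=\sum_j[\pi_j ir\iota_j]$.
\end{enumerate}
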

Finally, we also show that the trace or regulator map of \cite{bohmann2024trace} from $K$-theory of $\PP^X_G$ to group homology of $G$ with coefficients in a measure $A$ on $\PP^X_G$ (\cref{def: measure}), factors through the Dennis trace map. This shows that the group homology of $G$ with coefficients in $A$ is a true trace invariant of $\PP^X_G$ and for each $A$ we can extract a trace invariant out of $HH(R)$.

\begin{theorem}[\cref{thm: regulator factors through Dennis trace}]
	The trace/regulator map of \cite{bohmann2024trace} factors through the Dennis trace map, i.e. we have a commutative diagram
    \[\begin{tikzcd}
	{K_n(\PP^X_G)} && {HH_n(R)} \\
	& {H_n(G; A)}
	\arrow["Dtr", from=1-1, to=1-3]
	\arrow["{\operatorname{tr}}"', from=1-1, to=2-2]
	\arrow["{\operatorname{tr}_\mM}", from=1-3, to=2-2]
\end{tikzcd}\]
    where $\operatorname{tr}_\mM$ map is defined using \cref{const: T_M(C)},\eqref{eqn: tr_M}.
\end{theorem}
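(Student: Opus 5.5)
The plan is to build a natural transformation of ``trace'' functors at the level of the categories producing both sides. The regulator of \cite{bohmann2024trace} is, I expect, built from the measure $A$ by sending a polytope $Q$ to $A(Q)$. It is then assembled into a map from the $K$-theory of $\PP^X_G$ (via $\coprod B\Aut$, or an equivalent cyclic-bar model) to the group homology $H_*(G;A)$, computed as the homology of the bar complex $B_*(G;A)$.

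\textbf{Step 1: define $\operatorname{tr}_\mM$.} Using \cref{const: T_M(C)}, I will define $\operatorname{tr}_\mM$ on the cyclic bar complex of $\bbZ\mM(\PP^X_G)$. A generator is a cyclic chain of scissors correspondences
\[
P_0 \leftarrow S_1 \rightarrow P_1 \leftarrow \cdots \rightarrow P_0 .
\]
Refining the pieces so that each correspondence is a disjoint family of pieces moved by single group elements, such a chain decomposes as a sum over ``cycles of pieces''. Each cycle records a polytope $Q$ together with the group elements $g_1,\dots,g_n$ it passes through. I will send such a term to $A(Q)\otimes[g_1|\cdots|g_n]$ in the bar complex.

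\textbf{Step 2: check $\operatorname{tr}_\mM$ is a chain map.} I must verify that this assignment does not depend on the refinement, which follows from additivity of the measure $A$. I must also check compatibility with the Hochschild differential, where composing adjacent correspondences corresponds to multiplying adjacent $g_i$ in the bar differential. This is the main obstacle. The bookkeeping of how the pieces of a composite correspondence restrict and recombine must match the bar differential exactly, including the last face map that wraps around and acts on the coefficient. I would first handle the restricted case where every correspondence is a single partial isometry, and then extend by additivity.

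\textbf{Step 3: identify the composite.} I will show $\operatorname{tr}_\mM\circ DTr=\operatorname{tr}$ at the level of models. Recall from \cref{const: Dennis trace map} that the Dennis trace sends an automorphism chain $(f_1,\dots,f_n)$ of $Q$ to the cyclic chain $(f_1,\dots,f_n,(f_1\cdots f_n)^{-1})$, composed with the Morita equivalence of \cref{prop: Morita equiv between ZM and ZP} to land in $HH(R)$. Applying $\operatorname{tr}_\mM$ to this image should reproduce verbatim the formula in \cite{bohmann2024trace}, which sums over pieces of $Q$ together with the induced group elements.

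\textbf{Step 4: pass to homotopy and homology.} Compatibility with group completion is automatic, since both maps are induced from the monoid $\coprod B\Aut$ and the targets are group-like. Taking $\pi_n$ and $H_n$ then yields the commutative triangle. As a sanity check at level $0$, I will verify agreement with \cref{thm: Dennis trace at level 0}: $\operatorname{tr}_\mM$ sends $\sum_i[\psi_i]$ to $\sum_i A(Q_i)=A(Q)$, which is the regulator on $K_0$.
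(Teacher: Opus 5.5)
Your overall plan is the paper's: build $T_\mM\colon HH(\bbZ\mM(\CC_{hG}))\to B(\bbZ,\bbZ[G],A)$ by following pieces of a refinement around the cycle, prove it is simplicial, and check that precomposing with the Dennis trace gives the regulator. Step 3, however, is set up on the wrong models, in two ways.

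First, the regulator is not given on $\coprod B\Aut(Q)$. As recalled in \cref{const: regulator map}, it is the explicit simplicial map $T_\wW$ on $B\wW(\CC_{hG})$, whose $q$-simplices are chains of covers $P^0\leftarrow P^1\leftarrow\cdots\leftarrow P^q$. An automorphism chain in $\gG(\CC_{hG})$ consists of spans, and there is no regulator formula on it to reproduce ``verbatim''. To use it you would first have to extend $T_\wW$ to $B\gG$ and check compatibility with the localization. The paper instead starts from a $q$-simplex of $B\wW(\CC_{hG})$ and pushes it into $B\gG(\CC_{hG})$ along $L_W$, which is an equivalence on nerves, so this suffices. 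There everything is explicit. The map $cyc$ produces the cycle whose $m$th zigzag is $P^{m-1}\xleftarrow{h^m}P^m\xrightarrow{=}P^m$ and whose closing zigzag is the flipped composite. The refinement of the whole cycle is $P^q$ itself, with $g^m=e$ for $m\ge 1$. So $T_\mM$ returns $\sum_i h^1_{\gamma^1(i)}\otimes\cdots\otimes h^q_{\gamma^q(i)}\otimes\mu(P^q_i)$, which is $T_\wW$ on the nose.

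Second, do not pass to $HH(R)$ before applying $\operatorname{tr}_\mM$. The Morita identification is a zigzag of equivalences, so there is no explicit chain in $HH(R)$ to evaluate. Since $T_\mM$ is defined on all of $HH(\bbZ\mM(\CC_{hG}))$, and $HH_n(R)\cong HH_n(\bbZ\mM(\CC_{hG}))$ is induced by the inclusion $\bbZ P(\CC_{hG})\to\bbZ\mM(\CC_{hG})$, the comparison belongs in $HH(\bbZ\mM(\CC_{hG}))$. This is the form in which the paper states and proves the theorem.

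Step 1 also leaves out what makes $T_\mM$ well defined (\cref{rem: TM depends on cyclicity}). The refinement $R$ must be of the entire closed cycle, including the closing correspondence $Z^0$, so that each piece $R_k$ maps compatibly to every $Q^m$. Equivalently, only trajectories that return to themselves with trivial total group element contribute. You then record the $q$ elements $w^m_k=h^m_{\gamma^m(k)}(g^m_{\gamma^m(k)})^{-1}$ for $1\le m\le q$; the closing one is $(w^1_k\cdots w^q_k)^{-1}$ and is dropped. You evaluate $\mu$ on the piece placed in the last object, $\mu\big((g^q_{\gamma^q(k)}r^q_k)R_k\big)$. Since $\mu$ is only $G$-equivariant, this placement is exactly what makes $d_q$ commute.

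If a ``cycle of pieces'' is only required to land back on the same piece, the assignment is not compatible with $d_0=d_1$, and additivity of $\mu$ cannot repair this. For example, the reflection of $[0,1]$ about $1/2$, as a $0$-simplex on $[0,1]$, lands back on its piece. Its conjugate by the isomorphism $[0,1/2]\odot[1/2,1]\to[0,1]$ is homologous to it in $HH_0$, yet it swaps the two pieces. Likewise, refining only the open chain $Z^1,\dots,Z^q$ gives a larger $\widetilde R$ and a map that fails to commute with $d_0$. With these specifications, your Step 2 becomes the paper's verification in \cref{prop: well-defined} and \cref{prop: simplicial}.
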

More generally we show this for any weak $G$-assembler.

\subsection{Outline}
In \cref{sec:Dennis_trace_for_rings} we review the construction of classical Dennis trace map for rings. We then proceed to review weak assemblers, $\CC$, and construction of their $K$-theory, $K(\CC)$, in \cref{Subsec:Assemblers}. There is no new work in these sections. 

In \cref{subsec: ZMC} we construct the category of scissors correspondences $\bbZ\mM(\CC)$ and show the Morita equivalence between $\bbZ\mM(\CC)$ and $\bbZ P(\CC)$, the full subcategory generated by the Morita object $P$, in \cref{subsec: Morita Equivalence}. In \cref{subsec: Morita Equivalence}, we also compute Hochschild homology for various restricted polytope categories. In \cref{Subsec:Dennis_Trace_for_Assemblers}, we finally construct the Dennis trace map for $K$-theory of weak assemblers, studying the explicit map at level $0$. Finally in \cref{subsection:Comparision_with_BGMMZ}, we show that this map factors the trace/regulator map of \cite{bohmann2024trace}.

\subsection{Notations}\label{Notations}
\begin{enumerate}
\item $Ab$ denotes the category of Abelian groups. $sAb$ denotes the category of simplicial abelian groups.
\item $\Proj_R$ denotes the category of finitely generated projective $R$-modules.
\item\label{Not:group_as_category} At various points in the article, where it will be clear from the context, the notation $G$ will mean the one object category associated to a group $G$. Most of the time this will be accompanied by taking the nerve of this category as written as $BG$ using the notation below.
\item\label{Not:nerve} Given a category $\CC$, we use $B(\CC)$ to denote its nerve. The nerve of a group $G$ is the nerve of the associated one object category.

    \item\label{Not:Geometric_realization} We use $|\cdot|$ to denote the geometric realization of a simplicial object. Note that the geometric realization of a simplicial set is a topological space whereas of a simplicial abelian group is a topological group.
    \item\label{Not:cyclic_nerve} We denote the cyclic nerve of a category, $\CC$, by $B^{cyc}(\CC)$ \cite[Page 119]{Connes}. In particular see \cite[7.3.10]{Loday} for $B^{cyc}(G)$.
    
Usually there need not exist a simplicial map from the nerve of a category to its cyclic nerve. But in the case when the category is a groupoid which we also write $G$ here, such a map always exists and, at simplicial level $m$, is given by
\[cyc_m: (g_1, g_2, \cdots , g_m)\mapsto ((g_1 g_2 \cdots g_m)^{-1}, g_1, g_2, \cdots , g_m).\]
    \item\label{Not:cyclic_bar_complex} We denote the cyclic bar complex or the Hochschild complex of a $\bbZ$-algebra $A$ as $HH(A)$ (\cite[Page 9]{Loday}) and the Hochschild complex of an $Ab$-category (i.e. a category enriched in the category of abelian groups), $\CC$, as $HH(\CC)$ (\cite[Definition 2.1.1]{McCarthy1994}). The Hochschild complex is a simplicial abelian group and its geometric realization gives a topological group. Also, using Dold-Kan, we get a chain complex associated to this simplicial abelian group which we also call as the Hochschild complex. The homotopy groups of the simplicial abelian group and the topological group and the homology groups of the chain complex all equal each other and we call these groups Hochschild homology groups $HH_{\ast}(\CC)$.

 For an $Ab$-category, $\CC$, one can forget the enrichment to get an underlying category and take its cyclic nerve $B^{cyc}\CC$. Then there is a natural simplicial map 
 $$I: B^{cyc}\CC\to HH(\CC)$$
 given at simplicial level $m$ by
 $$I_m: (f_0,f_1,f_2,\cdots ,f_m) \mapsto f_0\otimes f_1\otimes f_2\otimes \cdots \otimes f_m.$$
\end{enumerate}

\subsection*{Acknowledgements}
The authors are deeply thankful for many insightful conversations and suggestions from Cary Malkiewich. His encouragement and excitement towards this project has been extremely supportive and foundational. We are also grateful to Maxine Calle, Teena Gerhardt, Mike Hill, Kate Ponto, Maru Sarazola and Inna Zakharevich for taking time to answer various of our questions at different points of this project. We are also grateful to Jim Davis for his feedback on a draft of the paper. Finally, we would like to thank the PIs of NSF Focused Research Group on Cut-and-Paste Methods in Algebraic K-theory for organizing various excellent schools and workshops to learn about this field. The first author deeply thankful to Mike Mandell and Ben Spitz for listening to many ramblings about this project and provide helpful feedback.

%%%%%%%%%%%%%%%%%%%%%%%%%%%%%%SECTION 2 CLASSICAL%%%%%%%%%%%%%%%%%%%%%%%%%%
\section{The Classical Dennis Trace for Rings}\label{sec:Dennis_trace_for_rings}
Consider a ring $R$. The Dennis trace map associated to $R$ is a map from algebraic $K$-theory of $R$ to Hochschild homology of $R$
\[DTr_R: K(R) \to HH(R).\]
In this section, we briefly review how to define this classical map for group completion $K$-theory, since it is crucial towards our construction of Dennis trace map for assembler $K$-theory in \cref{Subsec:Dennis_Trace_for_Assemblers}.

For $P$ a finitely generated projective $R$-module, i.e. an object of $\Proj_R$, let $\Aut(P)$ be the group of automorphisms of $P$ and $\End(P)$ be the ring of endomorphisms of $P$. We first start with the definition of group completion model of algebraic $K$-theory of the ring $R$. 
\begin{definition}
    The \textit{group completion algebraic $K$-theory} of $R$, $K(R)$, is defined as the group completion of the topological space $|\sqcup_{[P]} B\Aut P|$ where $[P]$ ranges over all isomorphism classes in $\Proj_R$ and $B\Aut P$ is as in \cref{Notations}(\ref{Not:nerve}). This group completion can be understood as the loop space of the classifying space of $|\sqcup_{[P]} B\Aut P|$, i.e. 
    $$K(R) \simeq \Omega B|\bigsqcup_{[P]} B\Aut P|$$
    and it is an infinite loop space.
\end{definition}

Now consider the following sequence of maps
\begin{equation}\label{Eq:Dennis_trace_sequence}
  B\Aut P \xrightarrow{1} B^{cyc}\Aut P \xrightarrow{2} HH(\End P) \overset{3}{\simeq} HH(R) 
\end{equation}
where the objects and maps involved are as follows:
\begin{enumerate}[(1)]
\item $B\Aut P$ and $B^{cyc}\Aut P$ denote the nerve and the cyclic nerve, respectively, of the one object category associated to the group $\Aut P$ (see \cref{Notations}(\ref{Not:nerve}),(\ref{Not:cyclic_nerve}),(\ref{Not:group_as_category})). Map 1 is then the map from the nerve to the cyclic nerve of $\Aut P$ as outlined in \cref{Notations}(\ref{Not:cyclic_nerve}). At simplicial level $m$, for $g_i \in \Aut P$, the map is 
\[(g_1, g_2, \cdots , g_m)\mapsto ((g_1 g_2 \cdots g_m)^{-1}, g_1, g_2, \cdots , g_m).\]

\item $HH(\End P)$ is the Hochschild complex associated to the $\bbZ$-algebra $\End P$ as in \cref{Notations}(\ref{Not:cyclic_bar_complex}). The natural inclusion $\Aut P \to \End P$ induces a map of cyclic nerves of these one object (unenriched) categories $$B^{cyc}\Aut P \to B^{cyc}\End P.$$ But the one object category $\End P$ is also enriched in $\bbZ$-algebras and therefore we have a map from $$B^{cyc}\End P \to HH(\End P)$$ as in \cref{Notations}(\ref{Not:cyclic_bar_complex}). Composing these two, we have map 2. Explicitly, at cyclic level $m$, for $g_i \in \Aut P$, the map is
\[(g_0 \times g_1 \times g_2 \times \cdots \times g_m) \mapsto (g_0 \otimes g_1 \otimes g_2 \otimes \cdots \otimes g_m).\]

\item $HH(R)$ is the cyclic bar complex associated to the $\bbZ$-algebra $R$ as in \cref{Notations}(\ref{Not:cyclic_bar_complex}). For $P$ a finitely generated projective $R$-module, $\End P$ and $R$ are Morita equivalent rings inducing a map $HH(\End P) \xrightarrow{\simeq} HH(R)$. More explicitly, this is the multitrace map at each cyclic level.
\end{enumerate}
We get, from \eqref{Eq:Dennis_trace_sequence},
\begin{equation}\label{eqn: trace}
    \sqcup_{[P]} B\Aut P \to \sqcup_{[P]} HH(\End P) \to HH(R).
\end{equation}
\begin{remark}\label{rmk: single object subcat}
    Alternatively, one can think of the first and second map above as going from $\sqcup_{[P]} B\Aut P$ to $HH(\Proj_R)$ and then using the fact that the single object subcategory of $\Proj_R$, $R$, is Morita equivalent to $\Proj_R$, we have the third map.
\end{remark}
 Taking geometric realization of the left hand side followed by group completion, gives algebraic $K$-theory of $R$. The right hand side is a simplicial abelian group. Therefore, its geometric realization is a topological group making group completion redundant, i.e. its group completion is homotopy equivalent to itself. Remember that we refer to the topological space and the chain complex associated to the Hochschild complex by the same notation (\cref{Notations}(\ref{Not:cyclic_bar_complex})). We therefore have the following map which we call the Dennis trace map for the ring $R$,
 \begin{equation}\label{eq:Dennis_Trace_Group_Completion}
    DTr_R: K(R) \to HH(R)
\end{equation}
from its algebraic $K$-theory to its Hochschild homology.
 
\begin{remark}\label{rem: image of K0(R)}
     Note that for a commutative ring $R$ at level $0$, the map from $K_0R \to HH_0R$ basically reads the dimension of the projective module $P$ representing a $K$-theory class by calculating the trace of the identity map from $P$ to itself. This statement is exactly true in particular for when $R$ is a field or when $P$ is a free module.
\end{remark}

%%%%%%%%%%%%%%%%%%%%%%%NEW SECTION%%%%%%%%%%%%%%%%%%%%%%%%%%%
 
\section{\texorpdfstring{$K$}{K}-theory of a Weak Assembler}\label{Subsec:Assemblers}

In this section, we review the construction of assembler $K$-theory \cite{ZAKHAREVICH20171176}. We use the more general setting of weak assemblers \cite{lemann2025scissorscongruencelineregulator} and categories with covering families \cite{bohmann2024trace} which we review below as well. We give particular examples of weak assemblers we are interested in, these examples will be recurring through later sections of this article.

An \textit{assembler} is a type of category where you can make sense of a notion of scissors congruence and define scissors congruence $K$-theory of. In \cite{bohmann2024trace}, the authors weaken the hypotheses of an assembler to define a \textit{category with covering families}. The definition of a category with covering families captures most of the nuances of an assembler, specially for the main examples the theory was created to study. In this paper we work with a \textit{weak assembler}, which is a slightly stronger notion than a category with covering families but is weak enough that most of the constructions of \cite{bohmann2024trace} which involve localization go through. 

\begin{definition}\label{def: category with families}\cite[Definition 2.1]{bohmann2024trace}
Let $\CC$ be a category. A multi-morphism in $\CC$ is an object $B\in \CC$ and a finite set (possibly empty) of morphisms indexed by $I$, all of which have the same codomain $B$:
\[\mathcal{F}_B=\{f_i: A_i \rightarrow B\}_{i\in I}.\]
A \textit{covering family structure} on $\CC$ is a collection of multi-morphisms which we call \textit{covering families} or \textit{covers} such that:
\begin{itemize}
    \item Each $\{Id_A:A\rightarrow A\}$ is a covering family for any $A\in \obj\CC$.
    \item Composition of covering families is preserved, i.e. given a covering family $\mathcal{F}_C$
    \[\{g_j:B_j \rightarrow C\}_{j\in J}\]
    and for each $j\in J$ a covering family $\mathcal{F}_{B_j}$
    \[\{f_{ij}:A_{ij}\rightarrow B_j\}_{i\in I_j},\]
    the composition $\{g_j \circ \mathcal{F}_{B_j}\}_{j\in J}$
    \[\{g_j\circ f_{ij}: A_{ij} \rightarrow C\}_{j\in J, i\in I_j}\]
    is a covering family of $C$.
\end{itemize}
A \textit{category with covering families} is a small category $\CC$ with a covering family structure and a distinguished basepoint $\varnothing\in \CC$ such that:
\begin{itemize}
    \item $\CC(\varnothing,\varnothing)=\{Id_\varnothing\}$ and $\CC(c,\varnothing)=\emptyset$ for $c\neq \varnothing$
    \item For any finite set $I$, the set $\{\varnothing\rightarrow\varnothing\}_{i\in I}$ is a covering family.
\end{itemize}
We call the category of the categories with covering families as \textbf{CatFam} where the morphisms are functors which preserve base point and the covering family structure. 
\end{definition}

Next we define a weak assembler, slightly reformulated from the cited source.
\begin{definition}\label{def:Assemblers}\cite[Definition 2.2]{lemann2025scissorscongruencelineregulator}
A \textit{weak assembler}, $\CC$, is a category with covering family such that:
\noindent\begin{itemize}
    \item \textbf{(I)} $\CC$ has a initial object $\varnothing$ such that empty family is a cover of $\varnothing$. This $\varnothing$ is the distinguished base point.
    \item \textbf{(D)} All the covering families are disjoint i.e. for any covering family $\{f_i:A_i\rightarrow B\}_{i\in I}$, the pullback $A_i \times_{B}A_j$ exists and is initial for $i\neq j$.
    \item \textbf{(R)} Any two covers of an object have a common \emph{refinement}.
    \item \textbf{(M)} Every morphism in $\CC$ is a monomorphism.
\end{itemize}
From \textbf{(I)} as $\CC(c, \varnothing)=\emptyset$ and $\varnothing$ is initial, weak assemblers have unique initial object. In \textbf{(R)}, by \emph{refinement} we mean the following: a covering family $\mathcal{F}_C$ is a refinement of the family $\mathcal{F'}_C = \{g_j: B_j \rightarrow C\}_{j\in J}$ if there exist covers $\mathcal{F}_{B_j}$ for each $j\in J$ such that $\mathcal{F}_C = \{g_j\circ \mathcal{F}_{B_j}\}_{j\in J}$. We define \textbf{wAsm} to the full subcategory of \textbf{CatFam} where the objects are the weak assemblers.
\end{definition}

\begin{remark}
    While our formulation of a weak assembler (\Cref{def:Assemblers}) slightly differs from Lemann's \cite[Definition 2.2]{lemann2025scissorscongruencelineregulator} by explicitly demanding a distinguished base point which is an initial object, this distinction is largely a matter of framing. In any category where a unique initial object $\varnothing$ exists and all morphisms are monomorphisms, the existence of a morphism $X\xrightarrow[]{} \varnothing$ implies $X \cong \varnothing$. Therefore, the initial object automatically satisfies the defining properties of a distinguished base point. This simplifies the analysis without affecting the results.
    
    All of the results here apply to weak assemblers of \cite{lemann2025scissorscongruencelineregulator} by adding a disjoint distinguished base point or an initial base point while using results from \cite{bohmann2024trace} (see \cite[Remark 2.3]{bohmann2024trace}) and forgetting the base point when using the results of \cite{KupersLemannMalkiewichMillerSroka2024ScissorsAuto}. 

\end{remark}
\begin{example}\label{ex:set_assembler}[Finite sets as a weak assembler] 
    Let \textbf{FinSet} be the category of finite sets (including the empty set $\emptyset$) and injective functions. $\emptyset$ is the initial object and $\{ f_i: A_i \rightarrow B\}_{i\in I}$ is a cover if $f_i(A_i)$ forms a partition of $B$.
\end{example}
\begin{example}\label{ex:polytope_assembler}[Polytopes as a weak assemblers \cite{bohmann2024trace}, \cref{def: category of polytopes}] 
    Let $X$ be $\mathcal{H}^n, \mathcal{S}^n$ or $\mathcal{E}^n$ and $G$ be a subgroup of isometry group of $X$. A polytope in $X$ is a finite union of $n$-simplices. 
    
    Let $\PP_G^X$ be the category with objects polytopes in $X$ (including empty polytope $\emptyset$) and morphisms $P\to Q$ consisting of inclusions $gP\subseteq Q$ for $g \in G$. The empty polytope $\emptyset$ is initial and $\{g_i: P_i\to Q\}_{i\in I}$ is a covering family if $Q= \cup g_iP_i$ where for any $i\neq j \in I$, $g_iP_i$ and $g_jP_j$ have disjoint interiors. 
    \end{example}
    
\begin{example}\label{ex:Restricted_polytope}[Restricted polytopes as weak assemblers] There are various restrictions that can be added to the category of polytopes in $\mathcal{E}^n$ to generate categories of restricted polytopes that are also weak assemblers using \cite[Lemma 6.3]{KupersLemannMalkiewichMillerSroka2024ScissorsAuto}. To add these restrictions, one makes a choice of a collection of hyperplanes, $\mathcal{L}$, in $\mathcal{E}^n$ and then defines a restricted convex polytope as a polytope in $\mathcal{E}^n$ which has all its facets on these hyperplanes and their intersections except the highest dimensional face which is in $\mathcal{E}^n$. Note that our use of notation $\mathcal{L}$ is different from \cite{KupersLemannMalkiewichMillerSroka2024ScissorsAuto} in that we are using $\mathcal{L}$ for just the generating hyperplanes and not the collection of all affine subspaces that $\mathcal{L}$ generates, which we denote by $\langle\mathcal{L}\rangle$. A general restricted polytope is then a finite union of restricted convex polytopes. A restricted isometry group $G_{\mathcal{L}}$ is a subgroup of the full isometry group of $\mathcal{E}^n$ that preserves $\langle\mathcal{L}\rangle$ as a set. One defines restricted polytope morphisms similar to $\Mor(\PP_G^{\mathcal{E}^n})$ except now we work with restricted isometry group. We denote a restricted polytope weak assembler corresponding to the set $\mathcal{L}$ of hyperplanes of $\mathcal{E}^n$ and restricted isometry subgroup $G_{\mathcal{L}}$  via $\PP_{G_{\mathcal{L}}}^{\mathcal{L}}$.

Here are some examples of categories of restricted polytopes:
\begin{enumerate}
    \item Let $\mathcal{L}$ be the set of all points in the interval $[0,1] \subset \mathcal{E}^1$. Then the restricted polytopes for $[0,1]$ are given by finite union of (disjoint) closed sub-intervals of $[0,1]$. In particular, we can consider the trivial group to be the restricted isometry subgroup, in which case the morphisms of these restricted polytopes are given by just inclusions. We denote this category of no-move polytopes by $\PP^{[0,1]}_1$.
    
    If we consider the whole symmetry group $C_2=\langle u | u^2=e \rangle$ of $[0,1]$, the morphisms are given either by inclusions corresponding to $e$ or by reflection about $1/2$ corresponding to $u$. We denote this category by $\PP^{[0,1]}_{C_2}$.
\item Let $\mathcal{L}_n$ be the set of points $$\left\{\frac{p}{n} \ \bigg\vert \ p\in\bbZ\right\}$$ in $\mathcal{E}^1$. Then the restricted polytopes for $\mathcal{L}_n$ are given by finite union of disjoint line segments with vertices in $\mathcal{L}_n$. The full restricted isometry group for $\mathcal{L}_n$ is given by $\bbZ \rtimes C_2$ where an element $q$ of $\bbZ$ corresponds to translation by $q/n$ and $C_2$ corresponds to the reflection subgroup. We denote this particular restricted polytope category by $\PP^{n}_{G_{\mathcal{L}_n}}$.
\item The above example can be generalized to `lattice polytopes' in any higher dimensional geometry. While the definition is general, we state the example only for the case $\mathcal{E}^2$ here since that will be most relevant for our computations later. Let $\mathcal{L}_{m,n}$ be the set of lines
\[ \bigcup_{p\in \bbZ}\left\{ x= \frac{p}{m} \right\}  \cup \bigcup_{q\in \bbZ}\left\{  y = \frac{q}{n} \right\}\]
in $\mathcal{E}^2$. Then the restricted polytopes for $\mathcal{L}_{m,n}$ are finite unions of rectangles that have vertices in $(1/m)\bbZ \times (1/n)\bbZ$ and sides as segments of one of the lines in $\mathcal{L}_{m,n}$. The full restricted isometry group depends on the relation between $m,n$. Translations by any $p/m$ or $q/n$ always preserve $\mathcal{L}_{m,n}$ and so does reflection. But if $m=n$ for example, then a rotation of the plane by $\pi/2$ also preserves $\mathcal{L}_{m,m}$. In fact the full restricted isometry subgroup for $\mathcal{L}_{m,m}$ will be the subgroup of $\bbR^2\rtimes O_2$ generated by such translations, rotations and reflections which is $\bbZ^2 \rtimes D_4$. Here $D_4$ is the second dihedral group. We denote the restricted category of polytopes corresponding to the case $\mathcal{L}_{m,m}$ by $\PP_{G_{\mathcal{L}_{m,m}}}^{m,m}$.
\end{enumerate}
\end{example}

Given a weak assembler $\CC$, we can define following categories associated to $\CC$: 

\begin{definition}\label{Def:W(C)}\cite[Definition 2.6]{lemann2025scissorscongruencelineregulator}
The \emph{category of covers}, $\wW(\CC)$ of a weak assembler $\CC$, has pairs $(I, \{P_i\}_{i\in I}) =:P $ as objects where $I$ is a finite set and each $P_i\in \obj\CC$, and a morphism 
\begin{equation}\label{eqn: arrow in W(C)}
    P=(I,\{P_i\})\xrightarrow{f}  Q= (J, \{Q_j\})
\end{equation}
is a map $f:I\rightarrow J$ of finite sets such that $\{f(i): P_i \rightarrow Q_j\}_{i\in f^{-1}(j)}$ is a cover for all $j\in J$. We shall refer to morphisms as $f$ which denotes both a set map from $I$ to $J$ as well as the collection of morphisms $f(i)$ in $\CC$. Composition of two such morphisms is given by a composition of the set maps and the composition of the corresponding morphisms. We shall refer to morphisms of $\wW(\CC)$ as \emph{covering map} or \emph{covers}.

The category $\wW(\CC)$ is symmetric monoidal with the monoidal product defined as $P \sqcup Q := (I \sqcup J, \{P_i\}\sqcup \{Q_j\})$. We often write the monoidal product of $P$ and $Q$ as just $PQ$.
\end{definition}

\begin{remark}\label{rem: Refinement in WC}
\cite[Proposition 2.11 (2)]{ZAKHAREVICH20171176}
Let $P=(I,\{P_i\}) \xrightarrow[]{f} Q= (J,\{Q_j\}) \xleftarrow[]{g}R=(K,\{R_k\})$ be a cospan of morphisms in $\wW(\CC)$. Then, we can use the fact that any two covers in $\CC$ have common refinement in $\CC$ to get the following commuting diagram:
% https://q.uiver.app/#q=WzAsNCxbMCwwLCJTX3tQXFxjYXAgUn0iXSxbMSwwLCJSIl0sWzAsMSwiUCJdLFsxLDEsIlEiXSxbMSwzLCJnIl0sWzIsMywiZiIsMl0sWzAsMiwiaF8xIiwyXSxbMCwxLCJoXzIiXV0=
\[\begin{tikzcd}
	{S_{P\cap R}} & R \\
	P & Q
	\arrow["{h_2}", from=1-1, to=1-2]
	\arrow["{h_1}"', from=1-1, to=2-1]
	\arrow["g", from=1-2, to=2-2]
	\arrow["f"', from=2-1, to=2-2]
\end{tikzcd}\]
where $S_{P\cap R} = (L,\{S_l\})$. By definition of finite cover,  
$$\{S_l\}_{l\in h_1^{-1}(i)} \xrightarrow[]{h_1} P_i \text{ and }\{S_l\}_{l\in h_2^{-1}(k)}\xrightarrow[]{h_2} R_k$$ are covers of pieces $P_i$ and $R_k$ respectively for each $i\in I$ and $k\in K$. We shall call $S_{P\cap R}$ the \textit{common refinement} of $P$ and $R$ over $Q$ in $\wW(\CC)$.
\end{remark}

The morphisms in $\wW(\CC)$ are very restrictive as one only has a morphism from \emph{finer pieces} into \emph{coarser pieces} and there is no map in the backwards direction. In \cite{KupersLemannMalkiewichMillerSroka2024ScissorsAuto}, the authors show that all morphisms of $\wW(\CC)$ can be localized to construct a groupoid $\gG(\CC)$ called the scissors congruence groupoid with the property $B(\wW(\CC))\simeq B(\gG(\CC))$ (see \cite[section 2.3]{lemann2025scissorscongruencelineregulator} for an alternate proof). We briefly recall this groupoid construction now.
\begin{definition}\label{Def: Category of fractions}
    The \textit{category of fractions} of a pair $(\CC,W)$, where $W$ is a wide subcategory of $\CC$, denoted by $\CC[W^{-1}]$, has same objects as of $\CC$ and the morphism are equivalence classes of spans such that the wrong way arrow is in $W$
    \[\CC[W^{-1}](a,b)= \left\{ a \xleftarrow{f} c\xrightarrow{g}b |f\in W \right\}/\sim\]
    where two spans from $a$ to $b$, $a \xleftarrow{f} c\xrightarrow{g}b$ and $a \xleftarrow{f'} c'\xrightarrow{g'}b$ are equivalent if there exists an object $x$ and morphisms $u,v$ in $\CC$ such that the following diagram commutes 
    % https://q.uiver.app/#q=WzAsNSxbMCwxLCJhIl0sWzEsMCwiYyJdLFsxLDIsIlxcZG90e2N9Il0sWzIsMSwiYiJdLFsxLDEsIngiXSxbMiwwLCJcXGRvdHtmfSJdLFsyLDMsIlxcZG90e2d9IiwyXSxbMSwwLCJmIiwyXSxbMSwzLCJnIl0sWzQsMSwidSJdLFs0LDIsInYiXV0=
\begin{equation} \label{diagram: eqivalence of span}
\begin{tikzcd}[cramped, column sep=small, row sep=small]
	& c \\
	a & x & b \\
	& c'
	\arrow["f"', from=1-2, to=2-1]
	\arrow["g", from=1-2, to=2-3]
	\arrow["u", from=2-2, to=1-2]
	\arrow["v", from=2-2, to=3-2]
	\arrow["f'", from=3-2, to=2-1]
	\arrow["g'"', from=3-2, to=2-3]
\end{tikzcd}
\end{equation}
and $fu=f'v\in W$. 
\end{definition}
\begin{remark}
 There is a localization functor $L_{W}: \CC\rightarrow\CC[W^{-1]}]$ which is identity on objects and maps morphisms as
 $$(a\xrightarrow[]{f}b) \mapsto (a \xleftarrow[]{=}a \xrightarrow[]{f}b).$$
 If $\CC$ has a symmetric monoidal structure and morphisms of $W$ are closed under tensoring with objects of $\CC$, then $\CC[W^{-1}]$ inherits a symmetric monoidal structure and $L_{W}$ is a symmetric monoidal functor \cite[Lemma 2.14]{KupersLemannMalkiewichMillerSroka2024ScissorsAuto}, \cite{DayLocalization}.   
\end{remark}
The following is a consequence of properties \textbf{R} and \textbf{M} of a weak assembler.
%%%%%%%%%%%%%%%%%%%%%%%%%%%%%%%%%%%%%%%%%%%%%%%%%%
\begin{lemma}\label{lemma: G(C) from ore}\cite[Lemma 2.15]{KupersLemannMalkiewichMillerSroka2024ScissorsAuto}
    For any weak assembler $\CC$, $\wW(\CC)$ is a wide subcategory of itself, giving a category of fractions $\wW(\CC)[\wW(\CC)^{-1}]$.
\end{lemma}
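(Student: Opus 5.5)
The statement is short, but to be meaningful it should be read as follows. The span construction of \cref{Def: Category of fractions} with $W=\wW(\CC)$ gives a well-defined category $\wW(\CC)[\wW(\CC)^{-1}]$. Concretely, $\wW(\CC)$ satisfies the hypotheses of a calculus of (left) fractions for spans.

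\emph{Wideness.} Wideness of $\wW(\CC)$ inside itself is immediate: it contains all objects and all morphisms. So the plan is to verify the Ore-type conditions that make the equivalence of spans transitive and composition well defined. These are:
\begin{enumerate}[(a)]
\item the Ore condition: every cospan $P\xrightarrow{f}Q\xleftarrow{g}R$ completes to a commutative square;
\item the cancellability condition: if $u\circ a=u\circ b$ for $a,b\colon X\to P$ and $u\colon P\to Q$, then there is $w$ with $a\circ w=b\circ w$.
\end{enumerate}

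\emph{Step 1: the Ore condition.} Condition (a) is exactly \cref{rem: Refinement in WC}. Given the cospan, I would work one piece at a time. Over each $Q_j$, the families $\{P_i\}_{i\in f^{-1}(j)}$ and $\{R_k\}_{k\in g^{-1}(j)}$ are two covers of $Q_j$. Axiom \textbf{(R)} gives a common refinement of these two covers. Taking the disjoint union of these refinements over $j\in J$ gives $S_{P\cap R}$. Refinements factor through both covers, and composites of covers are covers by \cref{def: category with families}. Hence $h_1$ and $h_2$ are morphisms in $\wW(\CC)$, and the square commutes.

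\emph{Step 2: cancellability, the main obstacle.} Take $a,b\colon X\to P$ with $u\circ a=u\circ b$. Since all morphisms of $\CC$ are monomorphisms by \textbf{(M)}, the equation $u(a(x))\circ a(x)=u(b(x))\circ b(x)$ forces the two component morphisms to agree whenever $a(x)=b(x)$ as set maps. The difficulty is the case $a(x)\neq b(x)$ with $u(a(x))=u(b(x))$. In that case the piece $X_x$ maps into two distinct members of the cover $\{P_i\}_{i\in u^{-1}(j)}$ compatibly over $Q_j$. So $X_x$ factors through the pullback $P_{a(x)}\times_{Q_j}P_{b(x)}$. By \textbf{(D)} this pullback is initial, so $X_x\cong\varnothing$ by the remark following \cref{def:Assemblers}.

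\emph{Step 3: constructing $w$.} I would take $X'$ to consist of the pieces of $X$ other than those empty pieces on which $a$ and $b$ disagree. I would then check that removing these pieces still yields covers. This uses that $\varnothing$ is initial and that the empty family covers $\varnothing$ by \textbf{(I)}; it may require the convention that empty pieces can be dropped from covers. If that convention fails, the fallback is to argue that the span equivalence relation is nonetheless transitive directly. To do so, I would apply Step 1 twice to the two witnesses of equivalence and use \textbf{(M)} to conclude that the resulting legs agree.

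\emph{Step 4: conclusion.} Given (a) and (b), the standard Gabriel–Zisman argument shows that $\sim$ is an equivalence relation, that composition via Ore squares is well defined and associative, and that the identity spans are units. This yields the category of fractions, as in \cite[Lemma 2.15]{KupersLemannMalkiewichMillerSroka2024ScissorsAuto}.
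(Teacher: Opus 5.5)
Your proof is correct and follows the route of \cite[Lemma 2.15]{KupersLemannMalkiewichMillerSroka2024ScissorsAuto}, which the paper cites without proof: the Ore condition comes from \textbf{(R)} and cancellability from \textbf{(M)}; your use of \textbf{(D)} to force the pieces on which $a$ and $b$ disagree to be $\varnothing$ also supplies a step that the paper's one-line attribution to \textbf{(R)} and \textbf{(M)} leaves out. The hedge in Step 3 is unnecessary: the index inclusion $w\colon X'\to X$ is already a morphism of $\wW(\CC)$, because each retained index has the identity cover as its fiber and each discarded piece $X_x=\varnothing$ has the empty family as its fiber, which is a cover by \textbf{(I)}, so you can drop the fallback argument.
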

\begin{definition}\label{Def:G(C)}
The \emph{category of scissors congruences} is the category $$\gG(\CC) := \wW(\CC)[\wW(\CC)^{-1}].$$ $\gG(\CC)$ has same objects as $\wW(\CC)$ and its morphisms are equivalence classes of spans $P\xleftarrow{} R \rightarrow Q$ where each arrow is a morphism of $\wW(\CC)$. We shall denote a morphism from $P$ to $Q$ in $\gG(\CC)$ by $$P \xrightarrow{s.c.} Q.$$
\end{definition}
\begin{remark}\label{rem:equiv in GC is refinement in WC}
    In the case of $\gG(\CC)$, $x$ from \eqref{diagram: eqivalence of span} is a refinement of $c$ and $c'$ in $\wW(\CC)$ (\cref{rem: Refinement in WC}) and therefore we call equivalence of two morphisms in $\gG(\CC)$ as \textit{equivalence up to refinement in $\wW(\CC)$} but we often suppress saying in $\wW(\CC)$. Also, the composite of two morphisms is given by Ore localization of middle cospan which is the same as doing the refinement of middle cospan in $\wW(\CC)$ (\cref{rem: Refinement in WC}).
\end{remark}
\begin{remark}\label{rmk: inverse in GC}
    Note that, by construction, $\gG(\CC)$ is a groupoid. The inverse of a morphism $P \xleftarrow[] {f} R \xrightarrow[]{g} Q$ is given by $Q \xleftarrow[]{g} R \xrightarrow[]{f} P$. This can be checked by first composing the two
    % https://q.uiver.app/#q=WzAsMTAsWzAsMiwiUCJdLFsyLDIsIlEiXSxbNCwyLCJQIl0sWzEsMSwiUiJdLFszLDEsIlIiXSxbMiwwLCJSIl0sWzQsMSwiPSJdLFs1LDIsIlAiXSxbNywyLCJQIl0sWzYsMSwiUiJdLFszLDAsImYiXSxbMywxLCJnIiwyXSxbNCwxLCJnIl0sWzQsMiwiZiIsMl0sWzUsMywiPSIsMix7InN0eWxlIjp7ImJvZHkiOnsibmFtZSI6ImRvdHRlZCJ9fX1dLFs1LDQsIj0iLDIseyJzdHlsZSI6eyJib2R5Ijp7Im5hbWUiOiJkb3R0ZWQifX19XSxbOSw3LCJmIiwyXSxbOSw4LCJmIl1d
\[\begin{tikzcd}[cramped, column sep=small, row sep=small]
	&& R \\
	& R && R & {=} && R \\
	P && Q && P & P && P
	\arrow["{=}"', dotted, from=1-3, to=2-2]
	\arrow["{=}"', dotted, from=1-3, to=2-4]
	\arrow["f", from=2-2, to=3-1]
	\arrow["g"', from=2-2, to=3-3]
	\arrow["g", from=2-4, to=3-3]
	\arrow["f"', from=2-4, to=3-5]
	\arrow["f"', from=2-7, to=3-6]
	\arrow["f", from=2-7, to=3-8]
\end{tikzcd}\]
and then checking that the composite is in the same class as the identity morphism of $P$ in $\gG(\CC)$:
% https://q.uiver.app/#q=WzAsNSxbMCwxLCJQIl0sWzEsMCwiUiJdLFsyLDEsIlAiXSxbMSwyLCJQIl0sWzEsMSwiUiJdLFsxLDAsImYiLDJdLFsxLDIsImYiXSxbMywwLCI9Il0sWzMsMiwiPSIsMl0sWzQsMSwiPSJdLFs0LDMsImYiXV0=
\[\begin{tikzcd}[cramped]
	& R \\
	P & R & P \\
	& P
	\arrow["f"', from=1-2, to=2-1]
	\arrow["f", from=1-2, to=2-3]
	\arrow["{=}", from=2-2, to=1-2]
	\arrow["f", from=2-2, to=3-2]
	\arrow["{=}", from=3-2, to=2-1]
	\arrow["{=}"', from=3-2, to=2-3]
\end{tikzcd}\]
Moreover, we note that the localization functor 
\begin{equation}
    \begin{aligned}
        L_{W} : \wW(\CC) & \to \gG(\CC) \label{eq:W(C)_to_G(C)}\\
        P \xrightarrow[]{f}Q & \mapsto P\xleftarrow[]{=}P \xrightarrow[]{f}Q
    \end{aligned}
\end{equation}
is symmetric monoidal.
\end{remark}
For a weak assembler, one can produce a $K$-theory spectrum \cite{ZAKHAREVICH20171176}. The associated infinite loop space of this spectrum can be understood as a group completion of the groupoid $\gG(\CC)$ with respect to its symmetric monoidal operation.
\begin{definition}\label{def:Assemblers_K_theory}\cite[Theorem 4.1, $K$-theory as group completion]{KupersLemannMalkiewichMillerSroka2024ScissorsAuto}
For a weak assembler $\CC$, define the \textit{$K$-theory space} of $\CC$, $K(\CC)$, as the group completion of $\gG(\CC)$:
$$K(\CC) := \Omega B|B\gG(\CC)|.$$
Equivalently, $K(\CC):= \Omega B|B\wW(\CC)|$.
\end{definition}
\begin{remark}\label{rmk: K0}
    $K_0$ group is generated by scissors congruence classes of objects $[Q]$ in $\CC$ via Grothendieck group completion.
\end{remark}

%%%%%%%%%%%%%%%%%%%NEW SECTION%%%%%%%%%%%%%%%%%%%%%%%%%%%%%%%%%%
\section{The Category of Scissors Correspondences}\label{subsec: ZMC}
The category of finitely generated projective modules, $\Proj_R$, is crucial towards the construction of Dennis trace map for rings. In this section we define an analogue of it for a weak assembler $\CC$. The goal is for this category to be enriched over $Ab$ and have general endomorphisms between any two objects that are non zero and not automorphisms. 

\begin{definition}\label{def: partial cover}
Let $P, Q \in \obj\wW(\CC)$. A \textit{partial cover} from $P$ to $Q$, denoted by $$\overline{f}_{P'}:P \xhookrightarrow{} Q,$$ is a choice of object $P' \in \obj\wW(\CC)$ and a cover $f:P \sqcup P' \to Q$ in $\wW(\CC)$.
\end{definition}
\begin{remark}\label{rem: cover is partial cover}
    Note that a cover $f: P\to Q$ always canonically gives a partial cover for the choice of $P' = \emptyset$, the resulting partial cover is $\overline{f}_{\emptyset}: P\emptyset \cong P \xhookrightarrow{} Q$.
\end{remark}
\begin{definition}\label{def: category of partial covers}
 The \textit{category of partial covers}, $\wW^{par}(\CC)$, has objects same as $\obj \wW(\CC)$ and a morphism from $P$ to $Q$ is a partial cover $\overline{f}_{P'}: P \xhookrightarrow{}Q$. Given partial covers $\overline{f}_{P'}: P \xhookrightarrow{} Q$ and $\overline{g}_{Q'}: Q \xhookrightarrow{} R$, we get the following composite in $\wW(\CC)$
 % https://q.uiver.app/#q=WzAsMyxbMCwwLCJQUCdRJyJdLFsxLDAsIlFRJyJdLFsyLDAsIlIiXSxbMCwxLCJmIGlkX3tRJ30iXSxbMSwyLCJnIl1d
\[\begin{tikzcd}[cramped]
	{PP'Q'} & {QQ'} & R
	\arrow["{f id_{Q'}}", from=1-1, to=1-2]
	\arrow["g", from=1-2, to=1-3],
\end{tikzcd}\]
giving us a partial cover $$(\overline{g\circ f id_{Q'}})_{P'Q'}: P \xhookrightarrow{} R.$$ This is the composite of the two partial covers we started with in $\wW^{par}(\CC)$. The identity partial cover from $P$ to itself is $$(\overline{id_P})_{\emptyset}: P\emptyset \cong P \xhookrightarrow{id_P} P.$$ This is a symmetric monoidal category with monoidal operation given by concatenation and there is a faithful symmetric monoidal functor from $\wW(\CC)$ to $\wW^{par}(\CC)$ which is identity on objects and on morphisms sends $f$ to $\overline{f}_{\emptyset}$. We sometimes take a shortcut and say a morphism of $\wW^{par}(\CC)$ is in $\wW(\CC)$ if this morphism is in the image of this faithful functor. 
\end{definition}
Similar to the construction of $\gG(\CC)$ from $\wW(\CC)$, we would like to define a corresponding category of spans for $\wW^{par}(\CC)$ where all the arrows of $\wW^{par}(\CC)$ are inverted. It is tempting to do an analogous category of fractions argument, unfortunately that does not work and we have to define the construction and check all the details explicitly. See \cref{warn: MC not category of fractions} for more details on this. We call this category $\mM(\CC)$ and work now on defining it. First we need a remark that will help define composition in $\mM(\CC)$. 
\begin{remark}\label{rem:completing cospan of partial covers}
Given a cospan of partial covers $$P \xhookrightarrow{\overline{f}_{P'}} Q \xhookleftarrow{\overline{g}_{R'}} R,$$ we get the left commutative diagram below, of morphisms in $\wW(\CC)$, by looking at the corresponding cospan of morphisms in $\wW(\CC)$ and then taking their refinement as in \cref{rem: Refinement in WC}. This gives a commutative diagram of partial covers in $\wW^{par}(\CC)$ on the right below:
% https://q.uiver.app/#q=WzAsOCxbMCwxLCJQIFAnIl0sWzEsMiwiUSJdLFsyLDEsIlIgUiciXSxbMSwwLCJTX3tQXFxjYXAgUn0gXFwgU197UFxcY2FwIFInfSBcXFxcIFNfe1AnIFxcY2FwIFJ9XFwgU197UCcgXFxjYXAgUid9Il0sWzMsMSwiUCJdLFs1LDEsIlIiXSxbNCwyLCJRIl0sWzQsMCwiU197UFxcY2FwIFJ9Il0sWzMsMCwiaCIsMl0sWzMsMiwiayJdLFsyLDEsImciXSxbMCwxLCJmIiwyXSxbNyw0LCJcXG92ZXJsaW5le2h9X3tTX3tQXFxjYXAgUid9fSIsMix7InN0eWxlIjp7InRhaWwiOnsibmFtZSI6Imhvb2siLCJzaWRlIjoiYm90dG9tIn19fV0sWzcsNSwiXFxvdmVybGluZXtrfV97U197UCdcXGNhcCBSfX0iLDAseyJzdHlsZSI6eyJ0YWlsIjp7Im5hbWUiOiJob29rIiwic2lkZSI6InRvcCJ9fX1dLFs0LDYsIlxcb3ZlcmxpbmV7Zn1fe1AnfSIsMix7InN0eWxlIjp7InRhaWwiOnsibmFtZSI6Imhvb2siLCJzaWRlIjoidG9wIn19fV0sWzUsNiwiXFxvdmVybGluZXtnfV97Uid9IiwwLHsic3R5bGUiOnsidGFpbCI6eyJuYW1lIjoiaG9vayIsInNpZGUiOiJib3R0b20ifX19XSxbMiw0LCIiLDEseyJsZXZlbCI6Mn1dXQ==
\[\begin{tikzcd}[cramped, column sep=small, row sep=small]
	& \begin{array}{c} S_{P\cap R} \ S_{P\cap R'} \\ S_{P' \cap R}\ S_{P' \cap R'} \end{array} &&& {S_{P\cap R}} & \\
	{P P'} && {R R'} & P && R \\
	& Q &&& Q
	\arrow["h"', from=1-2, to=2-1]
	\arrow["k", from=1-2, to=2-3]
	\arrow["{\overline{h}_{S_{P\cap R'}}}"', hook', from=1-5, to=2-4]
	\arrow["{\overline{k}_{S_{P'\cap R}}}", hook, from=1-5, to=2-6]
	\arrow["f"', from=2-1, to=3-2]
	\arrow[Rightarrow, from=2-3, to=2-4]
	\arrow["g", from=2-3, to=3-2]
	\arrow["{\overline{f}_{P'}}"', hook, from=2-4, to=3-5]
	\arrow["{\overline{g}_{R'}}", hook', from=2-6, to=3-5]
\end{tikzcd}\]
\end{remark}
We now define a new category denoted as $\mM(\CC)$ where we invert all the partial covers and consider their spans. This category will be then modified to construct the category of scissors correspondences.
\begin{definition}\label{def: unenriched scissors correspondences}
    The \emph{category $\mM(\CC)$} has objects same as $\obj\wW(\CC)$ and a morphism $f: P\to Q$, $P,Q \in \obj\mM(\CC)$, is an equivalence class of spans of partial covers $P \xhookleftarrow{} R \xhookrightarrow{} Q$, $R \in \obj\mM(\CC)$, up to refinement in $\wW(\CC)$. Note that as in \cref{rem:equiv in GC is refinement in WC}, by up to refinement in $\wW(\CC)$, we mean the spans $P \xhookleftarrow{} R \xhookrightarrow{} Q$ and $P \xhookleftarrow{} R' \xhookrightarrow{} Q$ are equivalent if there exists $S \in \obj\wW(\CC)$ along with a commuting diagram as follows in $\wW^{par}(\CC)$
    % https://q.uiver.app/#q=WzAsNSxbMCwxLCJQIl0sWzEsMCwiUiJdLFsxLDIsIlInIl0sWzIsMSwiUSJdLFsxLDEsIlMiXSxbMSwwLCIiLDAseyJzdHlsZSI6eyJ0YWlsIjp7Im5hbWUiOiJob29rIiwic2lkZSI6ImJvdHRvbSJ9fX1dLFsxLDMsIiIsMix7InN0eWxlIjp7InRhaWwiOnsibmFtZSI6Imhvb2siLCJzaWRlIjoidG9wIn19fV0sWzIsMCwiIiwyLHsic3R5bGUiOnsidGFpbCI6eyJuYW1lIjoiaG9vayIsInNpZGUiOiJ0b3AifX19XSxbMiwzLCIiLDAseyJzdHlsZSI6eyJ0YWlsIjp7Im5hbWUiOiJob29rIiwic2lkZSI6ImJvdHRvbSJ9fX1dLFs0LDIsIlxcaW4gXFxtYXRoY2Fse1d9KFxcbWF0aGNhbHtDfSkiXSxbNCwxLCJcXGluIFxcbWF0aGNhbHtXfShcXG1hdGhjYWx7Q30pIiwyXV0=
\[\begin{tikzcd}[cramped]
	& R & \\
	P & S & Q \\
	& {R'}
	\arrow[hook', from=1-2, to=2-1]
	\arrow[hook, from=1-2, to=2-3]
	\arrow["{\in \mathcal{W}(\mathcal{C})}"', from=2-2, to=1-2]
	\arrow["{\in \mathcal{W}(\mathcal{C})}", from=2-2, to=3-2]
	\arrow[hook, from=3-2, to=2-1]
	\arrow[hook', from=3-2, to=2-3]
\end{tikzcd}\]
Similar to \cref{rem:equiv in GC is refinement in WC} we simply call this as \textit{equivalence up to refinement}. The composition is defined by completing the cospan of the partial covers as in \cref{rem:completing cospan of partial covers} and is well defined by \cref{rem: well defined} below. The identity morphism of a object $P$ in $\mM(\CC)$ is given by the class
$$P \xhookleftarrow{\overline{id_P}_{\emptyset}} P \xhookrightarrow{\overline{id_P}_{\emptyset}} P$$ 
and we denote is as $id: P\to P$.
The category $\mM(\CC)$ has a symmetric monoidal structure (by concatenation). Most of the times, when it is clear from context, we will just concatenate objects of $\mM(\CC)$ to denote their product. But, when we need to highlight the product, we use the symbol $\odot$ to denote it. 
    \end{definition}

\begin{remark}\label{rem: well defined}
To check that composition is well defined in $\mM(\CC)$, consider a different representative from equivalence class of two morphisms:
% https://q.uiver.app/#q=WzAsMTEsWzAsMSwiUCJdLFsxLDAsIlIiXSxbMiwxLCJRIl0sWzQsMSwiUyJdLFszLDAsIlQiXSxbMywyLCJUJyJdLFsxLDIsIlInIl0sWzEsMSwiXFx0ZXh0Y29sb3J7Ymx1ZX17UicnfSJdLFszLDEsIlxcdGV4dGNvbG9ye2JsdWV9e1QnJ30iXSxbMiwwLCJcXHRleHRjb2xvcntyZWR9e1V9Il0sWzIsMiwiXFx0ZXh0Y29sb3J7cmVkfXtVJ30iXSxbMSwwLCIiLDIseyJzdHlsZSI6eyJ0YWlsIjp7Im5hbWUiOiJob29rIiwic2lkZSI6ImJvdHRvbSJ9fX1dLFsxLDIsIiIsMCx7InN0eWxlIjp7InRhaWwiOnsibmFtZSI6Imhvb2siLCJzaWRlIjoidG9wIn19fV0sWzYsMiwiIiwwLHsic3R5bGUiOnsidGFpbCI6eyJuYW1lIjoiaG9vayIsInNpZGUiOiJib3R0b20ifX19XSxbNiwwLCIiLDIseyJzdHlsZSI6eyJ0YWlsIjp7Im5hbWUiOiJob29rIiwic2lkZSI6InRvcCJ9fX1dLFs1LDMsIiIsMCx7InN0eWxlIjp7InRhaWwiOnsibmFtZSI6Imhvb2siLCJzaWRlIjoiYm90dG9tIn19fV0sWzQsMywiIiwyLHsic3R5bGUiOnsidGFpbCI6eyJuYW1lIjoiaG9vayIsInNpZGUiOiJ0b3AifX19XSxbNywxXSxbNyw2XSxbOCw0XSxbOCw1XSxbNCwyLCIiLDAseyJzdHlsZSI6eyJ0YWlsIjp7Im5hbWUiOiJob29rIiwic2lkZSI6ImJvdHRvbSJ9fX1dLFs1LDIsIiIsMix7InN0eWxlIjp7InRhaWwiOnsibmFtZSI6Imhvb2siLCJzaWRlIjoidG9wIn19fV0sWzEwLDYsIiIsMCx7InN0eWxlIjp7InRhaWwiOnsibmFtZSI6Imhvb2siLCJzaWRlIjoidG9wIn19fV0sWzEwLDUsIiIsMCx7InN0eWxlIjp7InRhaWwiOnsibmFtZSI6Imhvb2siLCJzaWRlIjoiYm90dG9tIn19fV0sWzksMSwiIiwwLHsic3R5bGUiOnsidGFpbCI6eyJuYW1lIjoiaG9vayIsInNpZGUiOiJib3R0b20ifX19XSxbOSw0LCIiLDAseyJzdHlsZSI6eyJ0YWlsIjp7Im5hbWUiOiJob29rIiwic2lkZSI6InRvcCJ9fX1dXQ==
\[\begin{tikzcd}[cramped, column sep=small, row sep=small]
	& R & {\textcolor{red}{U}} & T & \\
	P & {\textcolor{blue}{R''}} & Q & {\textcolor{blue}{T''}} & S \\
	& {R'} & {\textcolor{red}{U'}} & {T'}
	\arrow[hook', from=1-2, to=2-1]
	\arrow[hook, from=1-2, to=2-3]
	\arrow[hook', from=1-3, to=1-2]
	\arrow[hook, from=1-3, to=1-4]
	\arrow[hook', from=1-4, to=2-3]
	\arrow[hook, from=1-4, to=2-5]
	\arrow[from=2-2, to=1-2]
	\arrow[from=2-2, to=3-2]
	\arrow[from=2-4, to=1-4]
	\arrow[from=2-4, to=3-4]
	\arrow[hook, from=3-2, to=2-1]
	\arrow[hook', from=3-2, to=2-3]
	\arrow[hook, from=3-3, to=3-2]
	\arrow[hook', from=3-3, to=3-4]
	\arrow[hook, from=3-4, to=2-3]
	\arrow[hook', from=3-4, to=2-5]
\end{tikzcd}\]
where $R''$ and $T''$ witness the equivalence and $U$ and $U'$ are the composition as defined in \cref{rem:completing cospan of partial covers}. The cospan of partial covers $R''\xhookrightarrow{}  Q \xhookleftarrow{} T''$ can be completed by a partial cover $U''$ to give 
% https://q.uiver.app/#q=WzAsNCxbMCwxLCJSJyciXSxbMiwxLCJUJyciXSxbMSwxLCJRIl0sWzEsMCwiVScnIl0sWzAsMiwiIiwwLHsic3R5bGUiOnsidGFpbCI6eyJuYW1lIjoiaG9vayIsInNpZGUiOiJ0b3AifX19XSxbMSwyLCIiLDIseyJzdHlsZSI6eyJ0YWlsIjp7Im5hbWUiOiJob29rIiwic2lkZSI6ImJvdHRvbSJ9fX1dLFszLDAsIiIsMCx7InN0eWxlIjp7InRhaWwiOnsibmFtZSI6Imhvb2siLCJzaWRlIjoiYm90dG9tIn19fV0sWzMsMSwiIiwyLHsic3R5bGUiOnsidGFpbCI6eyJuYW1lIjoiaG9vayIsInNpZGUiOiJ0b3AifX19XV0=
\[\begin{tikzcd}[cramped]
	& {U''} & \\
	{R''} & Q & {T''.}
	\arrow[hook', from=1-2, to=2-1]
	\arrow[hook, from=1-2, to=2-3]
	\arrow[hook, from=2-1, to=2-2]
	\arrow[hook', from=2-3, to=2-2]
\end{tikzcd}\]
As $U''$ partially refines both $R''$ and $T''$ which are both finer covers of $R$ and $T$ (in $\wW(\CC)$) and completes the cospan $R'' \xhookrightarrow{} Q \xleftarrow{} T''$, $U''$ is a finer cover of $U$. Similarly, $U''$ is also a finer cover of $U'$. Hence one gets the required commuting diagram establishing the well definedness of composition in $\mM(\CC)$.
\end{remark}

\begin{notation}
    When writing a morphism of $\mM(\CC)$ in its full zigzag form, if a leg of the zig-zag is a cover rather than a partial cover, we might write that leg as an element of $\Mor(\wW(\CC))$ and use just the symbol $\to$ instead of $\hookrightarrow$ for it. 
 \end{notation} 
 \begin{remark}
     We call a morphism $i: P \to Q$ in $\mM(\CC)$ an \textit{inclusion} if it can be written as a zigzag $$i: P \xleftarrow{\in \wW(\CC)} P' \xhookrightarrow{} Q$$ where the first arrow is a cover. Similarly, a morphism $\pi: P \to Q$ is called a \textit{projection} if it can be written as a zigzag $$\pi: P \xhookleftarrow{} Q' \xrightarrow{\in \wW(\CC)} Q$$ where the second arrow is a cover.
     The identity map $$id_{R^{\odot n}}: \underbrace{R R \dots R}_{n \text{ times}} \to \underbrace{R R \dots R}_{n \text{ times}}$$ gives $n$ inclusions of $R$ into $R^{\odot n}$, which we also write as just the concatenation $R^n$, depending on which `coordinate' you choose $R$ to include into. We denote inclusion into the $k^{th}$ coordinate by $i_k$. Similarly there are $n$ projections from $R^n$ to $R$ and the projection onto the $k^{th}$ coordinate is denoted by $\pi_k$.
 \end{remark}
 \begin{remark}\label{rem: projection is retract}
 Note that a projection map is a retract. Given a projection $\pi: P \xhookleftarrow{} Q' \xrightarrow{} Q$, we have an inclusion $i: Q \xleftarrow{} Q' \xhookrightarrow{} P$ and $\pi \circ i$ gives $Q \xleftarrow{} Q' \xrightarrow{} Q$ which up to refinement is the identity map from $Q$ to itself.
 \end{remark}
\begin{remark}
    Note that there are symmetric monoidal functors $$\wW(\CC) \to \wW^{par}(\CC)\to \mM(\CC)$$ where the last functor sends an object to itself and a morphism $\overline{f}_{P'}: P \xhookrightarrow{} Q$ to $$P \xhookleftarrow{\overline{id}_{\emptyset}} P \xhookrightarrow{\overline{f}_{P'}} Q.$$
    Also, there exist symmetric monoidal functors
    \begin{equation}\label{eq:WC to GC to MC}
    \wW(\CC) \to \gG(\CC)\to \mM(\CC)
    \end{equation}
    where the last functor sends an object to itself and a morphism $P \xleftarrow[]{f}Q \xrightarrow[]{g}R$ to 
    $$P \xhookleftarrow[]{\overline{f}_{\emptyset}}Q \xhookrightarrow[]{\overline{g}_{\emptyset}}R.$$
    \end{remark}
    \begin{remark}\label{rem: isos in MC}
     Note that an isomorphism in $\mM(\CC)$ is just a morphism that is in the image of the functor $\gG(\CC) \to \mM(\CC)$ from \eqref{eq:WC to GC to MC}.
 \end{remark}
\begin{warning}\label{warn: MC not category of fractions}
 $\wW^{par}(\CC)$ is a wide subcategory of itself and therefore one can define a category of fractions $\wW^{par}(\CC)[\wW^{par}(\CC)^{-1}]$. The objects here are same as $\obj \mM(\CC)$ and morphisms are an equivalence class of spans of partial covers. However, this equivalence class of spans is different (and smaller) than the equivalence class of spans that make a morphism in $\mM(\CC)$. In fact, given two spans from $P$ to $R$, they are always equivalent in $\wW^{par}(\CC)[\wW^{par}(\CC)^{-1}]$ via the commutative diagram
 % https://q.uiver.app/#q=WzAsNSxbMCwxLCJQIl0sWzEsMCwiUSJdLFsyLDEsIlIiXSxbMSwyLCJRJyJdLFsxLDEsIlxccGhpIl0sWzEsMCwiIiwwLHsic3R5bGUiOnsidGFpbCI6eyJuYW1lIjoiaG9vayIsInNpZGUiOiJib3R0b20ifX19XSxbMSwyLCIiLDIseyJzdHlsZSI6eyJ0YWlsIjp7Im5hbWUiOiJob29rIiwic2lkZSI6InRvcCJ9fX1dLFszLDAsIiIsMix7InN0eWxlIjp7InRhaWwiOnsibmFtZSI6Imhvb2siLCJzaWRlIjoidG9wIn19fV0sWzMsMiwiIiwwLHsic3R5bGUiOnsidGFpbCI6eyJuYW1lIjoiaG9vayIsInNpZGUiOiJib3R0b20ifX19XSxbNCwxLCIiLDAseyJzdHlsZSI6eyJ0YWlsIjp7Im5hbWUiOiJob29rIiwic2lkZSI6ImJvdHRvbSJ9fX1dLFs0LDMsIiIsMCx7InN0eWxlIjp7InRhaWwiOnsibmFtZSI6Imhvb2siLCJzaWRlIjoidG9wIn19fV1d
\[\begin{tikzcd}[cramped, column sep=small, row sep=small]
	& Q & \\
	P & \emptyset & R. \\
	& {Q'}
	\arrow[hook', from=1-2, to=2-1]
	\arrow[hook, from=1-2, to=2-3]
	\arrow[hook', from=2-2, to=1-2]
	\arrow[hook, from=2-2, to=3-2]
	\arrow[hook, from=3-2, to=2-1]
	\arrow[hook', from=3-2, to=2-3]
\end{tikzcd}\]
Therefore, all spans from $P$ to $R$ are equivalent to the zero span $P\xhookleftarrow{} \emptyset\xhookrightarrow{} R$ in $\wW^{par}(\CC)[\wW^{par}(\CC)^{-1}]$.
\end{warning}
We now construct the category of scissors correspondences.
\begin{construction}\label{def: category of scissors correspondences}
\begin{enumerate} The category of scissors correspondences is constructed via the following series of steps:
    \item\label{item: 1} First, enrich the morphism sets of $\mM(\CC)$ freely over abelian groups, i.e., consider $\bbZ\langle\Mor_{\mM(\CC)}(P,Q)\rangle$ for $P,Q \in \obj(\mM(\CC))$ and then reduce this abelian group by identifying
    % https://q.uiver.app/#q=WzAsNSxbMSwwLCJcXHBoaSJdLFswLDEsIlAiXSxbMiwxLCJRIl0sWzMsMSwiPSJdLFs0LDEsIjAuIl0sWzAsMSwiIiwwLHsic3R5bGUiOnsidGFpbCI6eyJuYW1lIjoiaG9vayIsInNpZGUiOiJib3R0b20ifX19XSxbMCwyLCIiLDIseyJzdHlsZSI6eyJ0YWlsIjp7Im5hbWUiOiJob29rIiwic2lkZSI6InRvcCJ9fX1dXQ==
\[\begin{tikzcd}[cramped, column sep=small, row sep=small]
	& \emptyset &&& \\
	P && Q
	\arrow[hook', from=1-2, to=2-1]
	\arrow[hook, from=1-2, to=2-3]
\end{tikzcd}
=  \ \ \ \ \ \ \ \ \ \ 0.
\]
    Note that these reduced groups are still free abelian groups but now they are pointed. For the rest of this construction we write $\Hom$ (without any subscript label) for these reduced free abelian groups.
    \item For $P, Q, R, T_1, T_2 \in \obj(\mM(\CC))$, we now have following two kinds of group homomorphisms. The first one is
    \begin{equation*}
        F: \Hom(PQ,R) \rightarrow \Hom(P,R) \oplus \Hom(Q,R)
    \end{equation*}
    given on a generator on the left by taking its precomposite with inclusion maps $$P \xleftarrow{id} P \xhookrightarrow{(\overline{id_{PQ}})_{Q}} PQ \text{  and  } Q \xleftarrow{id} Q \xhookrightarrow{(\overline{id_{PQ}})_{P}} PQ$$ to get:
\begin{equation}\label{eq: split}
    % https://q.uiver.app/#q=WzAsMyxbMCwxLCJQUSJdLFsxLDAsIlQiXSxbMiwxLCJSIl0sWzEsMCwiIiwwLHsic3R5bGUiOnsidGFpbCI6eyJuYW1lIjoiaG9vayIsInNpZGUiOiJib3R0b20ifX19XSxbMSwyLCIiLDIseyJzdHlsZSI6eyJ0YWlsIjp7Im5hbWUiOiJob29rIiwic2lkZSI6InRvcCJ9fX1dXQ==
\begin{tikzcd}[cramped, column sep=small, row sep=small]
	& T & \\
	PQ && R
	\arrow[hook', from=1-2, to=2-1]
	\arrow[hook, from=1-2, to=2-3]
\end{tikzcd}
\longmapsto
% https://q.uiver.app/#q=WzAsMyxbMCwxLCJQIl0sWzIsMSwiUiJdLFsxLDAsIlNfe1BcXGNhcCBUfSJdLFsyLDAsIiIsMix7InN0eWxlIjp7InRhaWwiOnsibmFtZSI6Imhvb2siLCJzaWRlIjoiYm90dG9tIn19fV0sWzIsMSwiIiwwLHsic3R5bGUiOnsidGFpbCI6eyJuYW1lIjoiaG9vayIsInNpZGUiOiJ0b3AifX19XV0=
\begin{tikzcd}[cramped, column sep=small, row sep=small]
	& {S_{P\cap T}} & \\
	P && R
	\arrow[hook', from=1-2, to=2-1]
	\arrow[hook, from=1-2, to=2-3]
\end{tikzcd}
+
\begin{tikzcd}[cramped, column sep=small, row sep=small]
	& {S_{Q\cap T}} & \\
	Q && R
	\arrow[hook', from=1-2, to=2-1]
	\arrow[hook, from=1-2, to=2-3]
\end{tikzcd}
\end{equation}
Note that $S_{P\cap T} \odot S_{Q\cap T} \cong T$ in $\mM(\CC)$ and the map $T \hookrightarrow PQ$ gives $S_{P\cap T} \hookrightarrow P$ and $S_{Q\cap T} \hookrightarrow Q$. 
The second group homomorphism is
 \begin{equation*}
    G: \Hom(P,R) \oplus \Hom(Q,R) \rightarrow \Hom(PQ,RR) \rightarrow \Hom(PQ,R) 
    \end{equation*} 
    given on a set of generators by first taking their symmetric monoidal product giving an element in $\Hom(PQ,RR)$:
\begin{equation}\label{eq: product}
% https://q.uiver.app/#q=WzAsMyxbMCwxLCJQIl0sWzIsMSwiUiJdLFsxLDAsIlRfMSJdLFsyLDAsIiIsMCx7InN0eWxlIjp7InRhaWwiOnsibmFtZSI6Imhvb2siLCJzaWRlIjoiYm90dG9tIn19fV0sWzIsMSwiIiwyLHsic3R5bGUiOnsidGFpbCI6eyJuYW1lIjoiaG9vayIsInNpZGUiOiJ0b3AifX19XV0=
\begin{tikzcd}[cramped, column sep=small, row sep=small]
	& {T_1} & \\
	P && R
	\arrow[hook', from=1-2, to=2-1]
	\arrow[hook, from=1-2, to=2-3]
\end{tikzcd}
+
% https://q.uiver.app/#q=WzAsMyxbMCwxLCJRIl0sWzEsMCwiVF8yIl0sWzIsMSwiUiJdLFsxLDAsIiIsMix7InN0eWxlIjp7InRhaWwiOnsibmFtZSI6Imhvb2siLCJzaWRlIjoiYm90dG9tIn19fV0sWzEsMiwiIiwwLHsic3R5bGUiOnsidGFpbCI6eyJuYW1lIjoiaG9vayIsInNpZGUiOiJ0b3AifX19XV0=
\begin{tikzcd}[cramped, column sep=small, row sep=small]
	& {T_2} & \\
	Q && R
	\arrow[hook', from=1-2, to=2-1]
	\arrow[hook, from=1-2, to=2-3]
\end{tikzcd}
\longmapsto
% https://q.uiver.app/#q=WzAsMyxbMCwxLCJQUSJdLFsxLDAsIlRfMVRfMiJdLFsyLDEsIlJSLiJdLFsxLDAsIiIsMCx7InN0eWxlIjp7InRhaWwiOnsibmFtZSI6Imhvb2siLCJzaWRlIjoiYm90dG9tIn19fV0sWzEsMiwiIiwyLHsic3R5bGUiOnsidGFpbCI6eyJuYW1lIjoiaG9vayIsInNpZGUiOiJ0b3AifX19XV0=
\begin{tikzcd}[cramped, column sep=small, row sep=small]
	& {T_1T_2} & \\
	PQ && {RR.}
	\arrow[hook', from=1-2, to=2-1]
	\arrow[hook, from=1-2, to=2-3]
\end{tikzcd}
\end{equation}
Note that here $T_1$ is mapping into $P$ and first coordinate of $RR$ and $T_2$ is mapping into $Q$ and second coordinate of $RR$. And then taking the sum of the postcomposites of this element with projection into the first coordinate and projection into the second coordinate of $RR$:
\[
\begin{adjustbox}{max width=\textwidth}
$\displaystyle
% https://q.uiver.app/#q=WzAsMyxbMSwwLCJUXzFUXzIiXSxbMCwxLCJQUSJdLFsyLDEsIlJSIl0sWzAsMSwiIiwwLHsic3R5bGUiOnsidGFpbCI6eyJuYW1lIjoiaG9vayIsInNpZGUiOiJib3R0b20ifX19XSxbMCwyLCIiLDIseyJzdHlsZSI6eyJ0YWlsIjp7Im5hbWUiOiJob29rIiwic2lkZSI6InRvcCJ9fX1dXQ==
%\begin{tikzcd}[cramped, column sep=small, row sep=tiny]
	%& {T_1T_2} & \\
	%PQ && RR
	%\arrow[hook', from=1-2, to=2-1]
	%\arrow[hook, from=1-2, to=2-3]
%\end{tikzcd}
%\!\longmapsto\!
% https://q.uiver.app/#q=WzAsNixbMCwxLCJQUSJdLFsxLDAsIlRfMVRfMiJdLFsyLDEsIlJSIl0sWzMsMCwiUiJdLFs0LDEsIlIiXSxbMywxLCJcXHVuZGVyYnJhY2V7fV97XFxwaV8xfSJdLFszLDIsIiIsMix7InN0eWxlIjp7InRhaWwiOnsibmFtZSI6Imhvb2siLCJzaWRlIjoiYm90dG9tIn19fV0sWzMsNCwiIiwwLHsic3R5bGUiOnsidGFpbCI6eyJuYW1lIjoiaG9vayIsInNpZGUiOiJ0b3AifX19XSxbMSwwLCIiLDAseyJzdHlsZSI6eyJ0YWlsIjp7Im5hbWUiOiJob29rIiwic2lkZSI6ImJvdHRvbSJ9fX1dLFsxLDIsIiIsMSx7InN0eWxlIjp7InRhaWwiOnsibmFtZSI6Imhvb2siLCJzaWRlIjoidG9wIn19fV1d
\begin{tikzcd}[cramped, column sep=small, row sep=small]
	& {T_1T_2} && R & \\
	PQ && RR & {\underbrace{}_{\pi_1}} & R
	\arrow[hook', from=1-2, to=2-1]
	\arrow[hook, from=1-2, to=2-3]
	\arrow[hook', from=1-4, to=2-3]
	\arrow[hook, from=1-4, to=2-5]
\end{tikzcd}
\ + \
\begin{tikzcd}[cramped, column sep=small, row sep=small]
	& {T_1T_2} && R & \\
	PQ && RR & {\underbrace{}_{\pi_2}} & R
	\arrow[hook', from=1-2, to=2-1]
	\arrow[hook, from=1-2, to=2-3]
	\arrow[hook', from=1-4, to=2-3]
	\arrow[hook, from=1-4, to=2-5]
\end{tikzcd}
$
\end{adjustbox}
\]
Looking at the composites $FG$ and $GF$, we see that
\begin{align*}
  FG(P \xhookleftarrow{} T_1 \xhookrightarrow{} R, Q \xhookleftarrow{} T_2 \xhookrightarrow{} R) &\cong (P \xhookleftarrow{} T_1 \xhookrightarrow{} R, 0) + (0, Q \xhookleftarrow{} T_2 \xhookrightarrow{} R)\\ &\cong (P \xhookleftarrow{} T_1 \xhookrightarrow{} R, Q \xhookleftarrow{} T_2 \xhookrightarrow{} R)  
\end{align*}
using the note below \eqref{eq: product} and the identification from \cref{def: category of scissors correspondences}(\ref{item: 1}). So, we have $FG \cong Id$. Whereas, 
$GF(PQ \xhookleftarrow{} T \xhookrightarrow{} R)$ is
% https://q.uiver.app/#q=WzAsNixbMCwxLCJQUSJdLFsxLDAsIlNfe1BcXGNhcCBUfVNfe1FcXGNhcCBUfSJdLFsyLDEsIlJSIl0sWzMsMCwiUiJdLFs0LDEsIlIiXSxbMywxLCJcXHVuZGVyYnJhY2V7fV97XFxwaV8xfSJdLFszLDIsIiIsMix7InN0eWxlIjp7InRhaWwiOnsibmFtZSI6Imhvb2siLCJzaWRlIjoiYm90dG9tIn19fV0sWzMsNCwiIiwwLHsic3R5bGUiOnsidGFpbCI6eyJuYW1lIjoiaG9vayIsInNpZGUiOiJ0b3AifX19XSxbMSwwLCIiLDAseyJzdHlsZSI6eyJ0YWlsIjp7Im5hbWUiOiJob29rIiwic2lkZSI6ImJvdHRvbSJ9fX1dLFsxLDIsIiIsMSx7InN0eWxlIjp7InRhaWwiOnsibmFtZSI6Imhvb2siLCJzaWRlIjoidG9wIn19fV1d
\[\begin{tikzcd}[cramped, column sep=small, row sep=small]
	& {S_{P\cap T}S_{Q\cap T}} && R & \\
	PQ && RR & {\underbrace{}_{\pi_1}} & R
	\arrow[hook', from=1-2, to=2-1]
	\arrow[hook, from=1-2, to=2-3]
	\arrow[hook', from=1-4, to=2-3]
	\arrow[hook, from=1-4, to=2-5]
\end{tikzcd}
+
% https://q.uiver.app/#q=WzAsNixbMCwxLCJQUSJdLFsxLDAsIlNfe1BcXGNhcCBUfVNfe1FcXGNhcCBUfSJdLFsyLDEsIlJSIl0sWzMsMCwiUiJdLFs0LDEsIlIiXSxbMywxLCJcXHVuZGVyYnJhY2V7fV97XFxwaV8yfSJdLFszLDIsIiIsMix7InN0eWxlIjp7InRhaWwiOnsibmFtZSI6Imhvb2siLCJzaWRlIjoiYm90dG9tIn19fV0sWzMsNCwiIiwwLHsic3R5bGUiOnsidGFpbCI6eyJuYW1lIjoiaG9vayIsInNpZGUiOiJ0b3AifX19XSxbMSwwLCIiLDAseyJzdHlsZSI6eyJ0YWlsIjp7Im5hbWUiOiJob29rIiwic2lkZSI6ImJvdHRvbSJ9fX1dLFsxLDIsIiIsMSx7InN0eWxlIjp7InRhaWwiOnsibmFtZSI6Imhvb2siLCJzaWRlIjoidG9wIn19fV1d
\begin{tikzcd}[cramped, column sep=small, row sep=small]
	& {S_{P\cap T}S_{Q\cap T}} && R & \\
	PQ && RR & {\underbrace{}_{\pi_2}} & R
	\arrow[hook', from=1-2, to=2-1]
	\arrow[hook, from=1-2, to=2-3]
	\arrow[hook', from=1-4, to=2-3]
	\arrow[hook, from=1-4, to=2-5]
\end{tikzcd}\]
which is isomorphic to
% https://q.uiver.app/#q=WzAsMyxbMCwxLCJQUSJdLFsxLDAsIlNfe1BcXGNhcCBUfSJdLFsyLDEsIlIiXSxbMSwwLCIiLDAseyJzdHlsZSI6eyJ0YWlsIjp7Im5hbWUiOiJob29rIiwic2lkZSI6ImJvdHRvbSJ9fX1dLFsxLDIsIiIsMSx7InN0eWxlIjp7InRhaWwiOnsibmFtZSI6Imhvb2siLCJzaWRlIjoidG9wIn19fV1d
\[\begin{tikzcd}[cramped, column sep=small, row sep=small]
	& {S_{P\cap T}} & \\
	PQ && R
	\arrow[hook', from=1-2, to=2-1]
	\arrow[hook, from=1-2, to=2-3]
\end{tikzcd}
+
% https://q.uiver.app/#q=WzAsMyxbMCwxLCJQUSJdLFsxLDAsIlNfe1BcXGNhcCBUfSJdLFsyLDEsIlIiXSxbMSwwLCIiLDAseyJzdHlsZSI6eyJ0YWlsIjp7Im5hbWUiOiJob29rIiwic2lkZSI6ImJvdHRvbSJ9fX1dLFsxLDIsIiIsMSx7InN0eWxlIjp7InRhaWwiOnsibmFtZSI6Imhvb2siLCJzaWRlIjoidG9wIn19fV1d
\begin{tikzcd}[cramped, column sep=small, row sep=small]
	& {S_{Q\cap T}} & \\
	PQ && R
	\arrow[hook', from=1-2, to=2-1]
	\arrow[hook, from=1-2, to=2-3]
\end{tikzcd}\]
but not to $PQ \xhookleftarrow{} T \xhookrightarrow{} R$ in $\Hom(PQ,R)$.
\item We enforce $F$ and $G$ to be inverses of each other, i.e., $GF \cong Id$, by imposing the following relation on $\Hom$ groups, where $S_{P\cap T}$ and $S_{Q\cap T}$ satisfy the note below \eqref{eq: split},
\[
% https://q.uiver.app/#q=WzAsMyxbMCwxLCJQUSJdLFsxLDAsIlNfe1BcXGNhcCBUfSJdLFsyLDEsIlIiXSxbMSwwLCIiLDAseyJzdHlsZSI6eyJ0YWlsIjp7Im5hbWUiOiJob29rIiwic2lkZSI6ImJvdHRvbSJ9fX1dLFsxLDIsIiIsMSx7InN0eWxlIjp7InRhaWwiOnsibmFtZSI6Imhvb2siLCJzaWRlIjoidG9wIn19fV1d
\begin{tikzcd}[cramped, column sep=small, row sep=small]
	& {T} & \\
	PQ && R
	\arrow[hook', from=1-2, to=2-1]
	\arrow[hook, from=1-2, to=2-3]
\end{tikzcd}
=
\begin{tikzcd}[cramped, column sep=small, row sep=small]
	& {S_{P\cap T}} & \\
	PQ && R
	\arrow[hook', from=1-2, to=2-1]
	\arrow[hook, from=1-2, to=2-3]
\end{tikzcd}
+
\begin{tikzcd}[cramped, column sep=small, row sep=small]
	& {S_{Q\cap T}} & \\
	PQ && R
	\arrow[hook', from=1-2, to=2-1]
	\arrow[hook, from=1-2, to=2-3]
\end{tikzcd}
\]
\end{enumerate}
We denote the category with same objects as $\mM(\CC)$, but with these modified $\Hom$ groups, by $\bbZ\mM(\CC)$ and call it the \textit{category of scissors correspondences}. Note that $\bbZ\mM(\CC)$ is an $Ab$-enriched category.
\end{construction}

\begin{remark}\label{rem: Hom of actual object}
  Note that for $P, Q \in \obj(\CC) \subset \obj(\mM(\CC))$, $\Hom_{\bbZ\mM(\CC)}(P, Q)$ is the reduced free abelian group $\Hom(P, Q)$ from \cref{def: category of scissors correspondences}(\ref{item: 1}) that you get by freely enriching and then reducing since no extra relation is added in this case.
\end{remark}
\begin{remark}\label{rem: Hom of concatenations}
    Remember that general objects $P$ and $Q$ in $\mM(\CC)$ are concatenations of objects $P_1, P_2, \cdots, P_n$ and $Q_1, Q_2, \cdots, Q_m$ of $\CC$. Therefore, from \cref{def: category of scissors correspondences}, we have $$\Hom_{\bbZ\mM(\CC)}(P,Q) \cong \oplus_{1\leq i\leq n,1\leq j\leq m}\Hom_{\bbZ\mM(\CC)}(P_i,Q_j).$$
\end{remark}
\begin{remark}
The category of scissors correspondences inherits symmetric monoidal structure from $\mM(\CC)$ and we use the same notation, $\odot$, for this symmetric monoidal product. The monoidal product concatenates objects and on morphisms it does the following 
$$\sum_{i=1}^{k}n_{i}(P \xhookleftarrow{} T_i \xhookrightarrow{} Q) \odot \sum_{j=1}^{l}m_{j}(O \xhookleftarrow{} U_j \xhookrightarrow{} R) = \sum_{i=1}^{k}\sum_{j=1}^{l}n_{i}m_{j}((P \xhookleftarrow{} T_i \xhookrightarrow{} Q) \odot (O \xhookleftarrow{} U_j \xhookrightarrow{} R)).$$
Moreover, the functor
\begin{equation}\label{eq:MC to ZMC}
 \mM(\CC) \to \bbZ\mM(\CC)   
\end{equation}
that is identity on objects and sends a morphism in $\mM(\CC)$ to its equivalence class in $\Hom$ group of $\bbZ\mM(\CC)$ is symmetric monoidal.
\end{remark}
\begin{example}\label{ex: ZM for FinSets}
For $\CC:= \textbf{FinSet}$, the category of scissors correspondences $\bbZ\mM(\textbf{FinSet})$ has objects $(I,\{X_i\}_{i\in I})$ where $X_i \in \obj\textbf{FinSet}$ for each $i$. Given two objects $(I,\{X_i\}_{i\in I})$ and $(J,\{Y_j\}_{j\in J})$, 
\begin{align*}
\Hom_{\bbZ\mM(\textbf{FinSet})}((I,\{X_i\}_{i\in I}), (J,\{Y_j\}_{j\in J})) & \overset{\cref{rem: Hom of concatenations}}{\cong} \oplus_{i,j} \Hom_{\bbZ\mM(\textbf{FinSet})}(X_i, Y_j)\\
& \overset{\cref{rem: Hom of actual object}}{\cong} \oplus_{i,j}\Hom(X_i, Y_j). 
\end{align*}
Note that $\Hom(X_i, Y_j)$ is the free abelian group generated by equivalence classes of spans of injections $X_i \xhookleftarrow{} Z \xhookrightarrow{} Y_j$ where $Z$ is a non-empty finite set of cardinality less than $min\{|X_i|,|Y_j|\}$. 
In particular, if we compute $\Hom_{\bbZ\mM(\textbf{FinSet})}(\{1\},\{1\}) \cong \Hom(\{1\},\{1\})$, this will be generated as a reduced free abelian group by zigzags $\{1\}\xhookleftarrow[]{}X\xhookrightarrow[]{}\{1\}$ where $X$ is non empty and injects into the set $\{1\}$. The only possibility of such a zigzag is $$\{1\}\xleftarrow[]{id}\{1\}\xrightarrow[]{id}\{1\}.$$ Therefore, $\Hom_{\bbZ\mM(\textbf{FinSet})}(\{1\},\{1\}) \cong \bbZ$ as free abelian groups.
\end{example}
\begin{example} For the category $\PP_G^X$, the category of scissors correspondences $\bbZ\mM(\PP_G^X)$, has objects $(I,\{P_i\}_{i\in I})$ where $P_i \in \obj\PP_G^X$ for each $i$. Given two objects $(I,\{P_i\}_{i\in I})$ and $(J,\{Q_j\}_{j\in J})$,
        \begin{align*}
			\Hom_{\bbZ\mM(\PP_G^X)}((I,\{P_i\}_{i\in I}),(J,\{Q_j\}_{j\in J})) & \overset{\cref{rem: Hom of concatenations}}{\cong} \oplus_{i,j}\Hom_{\bbZ\mM(\PP_G^X)}(P_i, Q_j)\\
& \overset{\cref{rem: Hom of actual object}}{\cong} \oplus_{i,j}\Hom(P_i, Q_j)
		\end{align*}
        For the object $P:= [0,1]$, $\Hom([0,1],[0,1])$ is generated by all morphisms $[0,1] \xhookleftarrow{} R\xhookrightarrow{} [0,1]$ which is a huge group.
\end{example}
\begin{example}\label{ex: ZM for Restricted Polytopes}
Consider the category of restricted polytopes $\PP^{\mathcal{L}}_{G_{\mathcal{L}}}$ for various $\mathcal{L}$ as in \cref{ex:Restricted_polytope}(1),(2),(3). Here we study the category of scissors correspondences for each of these examples:
    \begin{enumerate}
        \item For $\CC:= \PP^{[0,1]}_1$, the category of scissors correspondences $\bbZ\mM(\PP^{[0,1]}_1)$ has objects $(I,\{P_i\}_{i\in I})$ where $P_i \in \obj\PP^{[0,1]}_1$ for each $i$. Given two objects $(I,\{P_i\}_{i\in I})$ and $(J,\{Q_j\}_{j\in J})$,
        \begin{align*}
			\Hom_{\bbZ\mM(\PP^{[0,1]}_1)}((I,\{P_i\}_{i\in I}),(J,\{Q_j\}_{j\in J})) & \overset{\cref{rem: Hom of concatenations}}{\cong} \oplus_{i,j}\Hom_{\bbZ\mM(\PP^{[0,1]}_1)}(P_i, Q_j)\\
& \overset{\cref{rem: Hom of actual object}}{\cong} \oplus_{i,j}\Hom(P_i, Q_j).
		\end{align*}
		$\Hom(P_i,Q_j)$ is the free abelian group generated by equivalence class of spans of inclusions $P_i \xhookleftarrow{} R \xhookrightarrow{} Q_j$ and therefore such an $R\in \obj\PP^{[0,1]}_1$ is a non-empty subset of $P_i \cap Q_j$. Further note that two such morphisms given by $R$ and $R'$ are equivalent if and only if $R$ and $R'$ are equal as sets and the morphisms are same set inclusion maps for both $R$ and $R'$.

		In particular, consider the polytope $P = [0,1]$. Then $$\Hom_{\bbZ\mM(\PP^{[0,1]}_1)}([0,1],[0,1]) \cong \Hom([0,1],[0,1])$$ is the free abelian group generated by finite union of closed intervals $R \subset [0,1]$. Hence if $M$ is the set of all finite disjoint union of closed intervals of $[0,1]$ then $\Hom_{\bbZ\mM(\PP_{[0,1]})}([0,1],[0,1]) \cong \bbZ[M]$. 

        For $\CC:= \PP^{[0,1]}_{C_2}$, we have the same objects but now $\Hom(P_i,Q_j)$ is the free abelian group generated by equivalence class of spans $$P_i \xhookleftarrow{g} R \xhookrightarrow{h} Q_j$$
        where $g,h \in C_2 =\{e, u\}$. Up to equivalence, this can be written as $$P_i \xhookleftarrow{e} gR \xhookrightarrow{hg^{-1}} Q_j$$ via
        % https://q.uiver.app/#q=WzAsNSxbMCwxLCJQX2kiXSxbMiwxLCJRX2oiXSxbMSwwLCJSIl0sWzEsMSwiUiJdLFsxLDIsImdSIl0sWzIsMCwiZyIsMix7InN0eWxlIjp7InRhaWwiOnsibmFtZSI6Imhvb2siLCJzaWRlIjoiYm90dG9tIn19fV0sWzIsMSwiaCIsMCx7InN0eWxlIjp7InRhaWwiOnsibmFtZSI6Imhvb2siLCJzaWRlIjoidG9wIn19fV0sWzQsMCwiZSIsMCx7InN0eWxlIjp7InRhaWwiOnsibmFtZSI6Imhvb2siLCJzaWRlIjoidG9wIn19fV0sWzQsMSwiaGdeey0xfSIsMix7InN0eWxlIjp7InRhaWwiOnsibmFtZSI6Imhvb2siLCJzaWRlIjoiYm90dG9tIn19fV0sWzMsMiwiZSIsMl0sWzMsNCwiZyIsMl1d
        \[\begin{tikzcd}[cramped]
        	& R & \\
        	{P_i} & R & {Q_j} \\
        	& gR
        	\arrow["g"', hook', from=1-2, to=2-1]
        	\arrow["h", hook, from=1-2, to=2-3]
        	\arrow["e"', from=2-2, to=1-2]
        	\arrow["g"', from=2-2, to=3-2]
        	\arrow["e", hook, from=3-2, to=2-1]
        	\arrow["{hg^{-1}}"', hook', from=3-2, to=2-3]
        \end{tikzcd}\]
        Hence an element of $\Hom(P_i,Q_j)$ is uniquely determined by such a pair $(D, \phi)$ where $D=gR$ and $\phi = hg^{-1}$. In particular, for $P = [0,1]$, $\Hom([0,1],[0,1])$ is the free abelian group generated by set of all such morphisms. The set of such morphisms for $[0,1]$ is $M \times \{e\} \cup M\times \{u\}$. Thus we have, $$\Hom_{\bbZ\mM(\PP^{[0,1]}_{C_2})}([0,1],[0,1]) \cong \Hom([0,1],[0,1]) \cong \bbZ[M\times C_2]$$ where $C_2$ here is just considered as the set $\{e,u\}$.
         
         \item Similarly for $\CC:= \PP^n_{G_{\mathcal{L}_n}}$, given two objects $(I,\{P_i\}_{i\in I})$ and $(J,\{Q_j\}_{j\in J})$ in $\bbZ\mM(\PP^n_{G_{\mathcal{L}_n}})$
        $$\Hom_{\bbZ\mM(\PP^n_{G_{\mathcal{L}_n}})}((I,\{P_i\}_{i\in I}),(J,\{Q_j\}_{j\in J})) \cong \oplus_{i,j}\Hom(P_i, Q_j).$$ Note that $G_{\mathcal{L}_n}$ is any subgroup of restricted isometry group. In particular, we will be interested in the cases where we work with just the translations subgroup $\bbZ$ or with the full restricted isometry group $\bbZ \rtimes C_2$ as discussed in \cref{ex:Restricted_polytope}(2).

		Consider the object $P:= [0,1/n] \in \obj(\bbZ\mM(\PP^n_{G_{\mathcal{L}_n}}))$. We demonstrate computation for $\Hom_{\bbZ\mM(\PP^n_{\bbZ})}([0,1/n],[0,1/n])$ and $\Hom_{\bbZ\mM(\PP^n_{\bbZ\rtimes C_2})}([0,1/n],[0,1/n])$.

		$\Hom_{\bbZ\mM(\PP^n_{\bbZ})}([0,1/n],[0,1/n])$ is generated as a free abelian group by morphisms 
		\begin{equation}\label{eqn: generator}
			[0,1/n]\xhookleftarrow{}R\xhookrightarrow{}[0,1/n]
		\end{equation}
		where $R$ is a non empty object in $\bbZ\mM(\PP^n_{\bbZ})$. But this means $R$ needs to have size at most $1/n$ and given the restriction $\mathcal{L}_n$, we have that $R$ is a single $1/n$-length interval that under this morphism, translates to $[0,1/n]$. Hence the morphisms look like $[0,1/n]\xleftarrow{T}R\xrightarrow{T}[0,1/n]$ where $T$ is the translation of $R$ to $[0,1/n]$. But any such morphism, up to equivalence, can be written as $[0,1/n]\xleftarrow{id}[0,1/n]\xrightarrow{id}[0,1/n]$ via
        % https://q.uiver.app/#q=WzAsNSxbMSwwLCJSIl0sWzAsMSwiWzAsMS9uXSJdLFsyLDEsIlswLDEvbl0iXSxbMSwyLCJbMCwxL25dIl0sWzEsMSwiUiJdLFswLDEsIlQiLDJdLFswLDIsIlQiXSxbMywxLCJpZCJdLFszLDIsImlkIiwyXSxbNCwwLCJpZCJdLFs0LDMsIlQiLDJdXQ==
\[\begin{tikzcd}[cramped]
	& R & \\
	{[0,1/n]} & R & {[0,1/n]} \\
	& {[0,1/n]}
	\arrow["T"', from=1-2, to=2-1]
	\arrow["T", from=1-2, to=2-3]
	\arrow["id", from=2-2, to=1-2]
	\arrow["T"', from=2-2, to=3-2]
	\arrow["id", from=3-2, to=2-1]
	\arrow["id"', from=3-2, to=2-3]
\end{tikzcd}\]
So $\Hom_{\bbZ\mM(\PP^n_{\bbZ})}([0,1/n],[0,1/n])$ is generated by $[0,1/n]\xleftarrow{id}[0,1/n]\xrightarrow{id}[0,1/n]$ and therefore as free abelian group
\begin{equation}\label{eqn: Hom for dim 1 lattice translations}
	\Hom_{\bbZ\mM(\PP^n_{\bbZ})}([0,1/n],[0,1/n]) \cong \bbZ.
\end{equation}		

$\Hom_{\bbZ\mM(\PP^n_{\bbZ\rtimes C_2})}([0,1/n],[0,1/n])$ is also generated as a free abelian group by morphisms \eqref{eqn: generator} and we get the same restrictions on length of $R$ except now $R$ can map into $[0,1/n]$ using some composite of translations and reflections. Consider a general morphism of this type
$$\left[0,\frac{1}{n}\right]\xleftarrow{\theta}\left[\frac{p}{n},\frac{p+1}{n}\right]\xrightarrow{\theta'}\left[0,\frac{1}{n}\right]$$
then we have an equivalence
\[\begin{tikzcd}[cramped]
	& \left[\frac{p}{n},\frac{p+1}{n}\right] & \\
	{\left[0,\frac{1}{n}\right]} & \left[\frac{p}{n},\frac{p+1}{n}\right] & {\left[0,\frac{1}{n}\right]} \\
	& {\left[0,\frac{1}{n}\right]}
	\arrow["\theta"', from=1-2, to=2-1]
	\arrow["\theta'", from=1-2, to=2-3]
	\arrow["id", from=2-2, to=1-2]
	\arrow["T_{-p}"', from=2-2, to=3-2]
	\arrow["\theta\circ T_{p}", from=3-2, to=2-1]
	\arrow["\theta'\circ T_{p}"', from=3-2, to=2-3]
\end{tikzcd}\]
where $T_p$ denotes translation by $p/n$.
So it suffices to only consider non equivalent morphisms $[0,1/n]\xleftarrow{\theta} [0,1/n]\xrightarrow{\theta'} [0,1/n]$. But further we have the equivalence
\[\begin{tikzcd}[cramped]
	& {[0,1/n]} & \\
	{[0,1/n]} & {[0,1/n]} & {[0,1/n]} \\
	& {[0,1/n]}
	\arrow["\theta"', from=1-2, to=2-1]
	\arrow["\theta'", from=1-2, to=2-3]
	\arrow["id", from=2-2, to=1-2]
	\arrow["\theta'"', from=2-2, to=3-2]
	\arrow["\theta\circ\theta'^{-1}", from=3-2, to=2-1]
	\arrow["id"', from=3-2, to=2-3]
\end{tikzcd}\]
showing we can restrict to morphisms of the type $[0,1/n]\xleftarrow{\theta} [0,1/n]\xrightarrow{id} [0,1/n]$. But any morphism from $[0,1/n]$ to itself up to isomorphism of isometry group elements is either identity map or a reflection followed by translation by $1/n$, $T_{1}\circ r$. Therefore as free abelian group 
\begin{equation}\label{eqn: Hom for dim 1 lattice translations+reflections}
	\Hom_{\bbZ\mM(\PP^n_{\bbZ\rtimes C_2})}([0,1/n],[0,1/n]) \cong \bbZ^2
\end{equation} 
generated by $[0,1/n]\xleftarrow{id} [0,1/n]\xrightarrow{id} [0,1/n]$ and $[0,1/n]\xleftarrow{T_{1}\circ r} [0,1/n]\xrightarrow{id} [0,1/n]$.
        \item Similar arguments hold for more general lattice polytope categories as defined in \cref{ex:Restricted_polytope}(3). For the object $I_m\times I_m$, where $I_m$ is $[0,1/m]$ in the category $\PP^{m,m}_{G_{\mathcal{L}_{m,m}}}$, we can compute $\Hom$ groups when $G_{\mathcal{L}_{m,m}}$ is $\bbZ$ (translations) or the full restricted isometry group $\bbZ \rtimes D_4$.
        
		$\Hom_{\PP^{m,m}_{\bbZ}}(I_m\times I_m, I_m\times I_m)$ is generated by 
\begin{equation*}
	I_m\times I_m \xleftarrow{id} I_m\times I_m \xrightarrow{id} I_m\times I_m
\end{equation*}
		and is, therefore, isomorphic to $\bbZ$.

		Whereas $\Hom_{\PP^{m,m}_{\bbZ \rtimes D_4}}(I_m\times I_m, I_m\times I_m)$ is generated by elements
		\begin{equation*}
	I_m\times I_m \xleftarrow{\theta} I_m\times I_m \xrightarrow{id} I_m\times I_m
\end{equation*} 
where $\theta$ ranges over non isomorphic isometries of the square to itself, i.e. $\theta$ ranges over the elements of $D_4$. Thus, as free abelian groups
\begin{equation}
	\Hom_{\PP^{m,m}_{\bbZ \rtimes D_4}}(I_m\times I_m, I_m\times I_m) \cong \bbZ^8.
\end{equation}
Note that the generators of this $D_4$ are $\rho$ (denoting rotation by $\pi/2$) and $T_{1,0}\circ r$ (denoting reflection along $y$-axis followed by translation by $(1/m,0)$).
\end{enumerate}
\end{example}

%%%%%%%%%%%%%%%%%%%%%%%%%%%%NEW SUBSECTION%%%%%%%%%%%%%%%%%%%%%%%%%%%%%%
\section{A Morita Equivalence}\label{subsec: Morita Equivalence}

In this section, we define a notion of Morita object in $\bbZ\mM(\CC)$ along with a single object subcategory containing this Morita object which we call $\bbZ P(\CC)$ (\cref{def: ZP}). We show that these two categories are Morita equivalent (\cref{prop: Morita equiv between ZM and ZP}) and therefore their Hochschild complexes are equivalent (\cref{thm: Morita_Invariance_Hochschild_homology}, \cref{corr: HH of ZM and ZP}). Further the Hochschild complex of $\bbZ P(\CC)$ can be rewritten as Hochschild homology of a ring (\cref{lemma: Hochschild homology of weak assembler}), which we use to do some Hochschild homology computations in \cref{ex: HH of P_n}, \cref{ex: HH of P_mm} and \cref{ex: HH of P_01}. For our discussions on Morita equivalence of $Ab$-categories and Morita invariance of Hochschild homology we use \cite{CaryMorita}, \cite{BM-Morita} as our main reference.

\begin{definition}\label{def: ZP}
Let $P$ be any object in $\bbZ\mM(\CC)$ such that it is also an object in $\CC$ and satisfies the following condition:
for every $Q \in \obj \bbZ\mM(\CC)$, there exists some (not necessarily unique) $n \geq 1$, and an inclusion (in the sense of \cref{def: unenriched scissors correspondences}) $Q \to P^n$. We call an object that satisfies such a condition, a \textit{Morita object}.
For such a $P$, consider the full subcategory of $\bbZ\mM(\CC)$ containing the single object $P$. We denote this category by $\bbZ P(\CC)$.
\end{definition}
Morita objects are not guaranteed to exist (see \cref{ex: Morita object in Polytopes} below). Before we give some examples of Morita objects, we define a type of weak assembler and then show that this type always has Morita objects.
\begin{definition}\label{def: EA weak assembler}\cite[Definition 3.2]{KupersLemannMalkiewichMillerSroka2024ScissorsAuto}
    Let $\operatorname{R}$ be an ordered abelian group and $R_{\geq 0}$ be the subset of elements $r$ that satisfy $r \geq 0$. A \textit{volume function} is a function $$\operatorname{vol}:\obj\CC\to \operatorname{R}_{\geq 0}$$ such that it is preserved by covering families i.e. given $\{P_i \to P\}_{i\in I}$ in $\wW(C)$, $\operatorname{vol}(P)=\sum_{i\in I}\operatorname{vol}(P_i)$. This can be extended to all objects of $\wW(\CC)$ by setting $\operatorname{vol}({P_i}):= \sum_i \operatorname{vol}(P_i)$. A \emph{weak EA-assembler} is a weak assembler with a volume function such that
    \noindent\begin{itemize}
        \item \textbf{(E)}: If $\operatorname{vol}(P)< \operatorname{vol}(Q)$ for $P,Q \in \wW(\CC)$ then
        there exists an inclusion in $\mM(\CC)$
        $$P \xleftarrow[]{\in \wW(\CC)} P' \xhookrightarrow{} Q. $$
        \item \textbf{(A)}: For any $P,Q \in \obj(\wW(\CC))$, there exists a cover ${Q_i}$ of $Q$ such that $2 \operatorname{vol}(Q_i) \leq \operatorname{vol}(P) $ for all $i$.
    \end{itemize} 
\end{definition}
\begin{remark}\label{rem: volumes}
    Note that we have $0\leq \vol(Q_i)$ and therefore $\vol(Q_i)\leq 2 \vol(Q_i)$. Therefore, from \textbf{(A)} we have, $\vol(Q_i)\leq\vol(P)$. In particular if $0< \vol(Q_i)$, then $\vol(Q_i)<\vol(P)$.
\end{remark}
\begin{lemma}\label{lemma: weak EA assembler has Morita}
   Every object in a weak EA-assembler with non-zero volume is a Morita object.  
\end{lemma}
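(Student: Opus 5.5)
Fix an object $P\in\obj\CC$ with $\vol(P)>0$ and an arbitrary $Q=(J,\{Q_j\}_{j\in J})\in\obj\wW(\CC)$. The plan is to cut $Q$ into pieces small enough to fit into $P$ one at a time. Then we place each piece in its own copy of $P$ and assemble the result into an inclusion $Q\to P^n$ in the sense of \cref{def: unenriched scissors correspondences}.

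\textbf{Step 1 (small pieces).} Apply axiom \textbf{(A)} to the pair $(P,Q)$. This gives a cover $f:Q'=(K,\{Q'_k\}_{k\in K})\to Q$ in $\wW(\CC)$ with $2\vol(Q'_k)\le \vol(P)$ for all $k$. By \cref{rem: volumes}, every $k$ with $\vol(Q'_k)>0$ satisfies $\vol(Q'_k)<\vol(P)$. If $\vol(Q'_k)=0$, we still have $\vol(Q'_k)=0<\vol(P)$, because $\vol(P)>0$. So in every case $\vol(Q'_k)<\vol(P)$.

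\textbf{Step 2 (fit each piece).} For each $k$, apply axiom \textbf{(E)} to the singleton objects $Q'_k$ and $P$ of $\wW(\CC)$. This gives an inclusion $Q'_k\xleftarrow{c_k} Q''_k\xhookrightarrow{\psi_k} P$, where $c_k\in\Mor\wW(\CC)$ is a cover and $\psi_k$ is a partial cover.

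\textbf{Step 3 (assemble).} Let $n=|K|$. If $K$ is empty, replace it by a single empty piece (or otherwise pad), so that $n\ge 1$ as required. Take the monoidal product over $k$. Monoidal products of covers are covers, and monoidal products of partial covers are partial covers, since $\wW(\CC)$ and $\wW^{par}(\CC)$ are symmetric monoidal. We obtain
\[Q''=\textstyle\bigodot_k Q''_k\xrightarrow{\odot c_k}\bigodot_k Q'_k\xhookrightarrow{\overline{\psi}}P^{n},\]
where the second arrow is the product partial cover $\bigodot_k\psi_k$. Composing the first arrow with $f$ gives a cover $f\circ(\odot c_k):Q''\to Q$ in $\wW(\CC)$. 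The zigzag $Q\xleftarrow{f\circ(\odot c_k)}Q''\xhookrightarrow{\odot\psi_k}P^n$ therefore has a cover as its backward leg and a partial cover as its forward leg, so it is an inclusion $Q\to P^n$. This shows $P$ is a Morita object.

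\textbf{Main obstacle.} The only delicate point is bookkeeping, namely checking that the concatenated legs really are morphisms of the right type. Two things need care. First, the product of partial covers $\psi_k$ (each with its own complement $R_k$) should be a partial cover with complement $\bigodot_k R_k$; this follows after a reshuffling by the symmetry isomorphism. Second, the edge cases must be handled: $Q$ empty or $K$ empty (pad with $\varnothing$ so that $n\ge1$), and zero-volume pieces, which Step 1 already covers using $\vol(P)>0$. The hypothesis $\vol(P)>0$ is used exactly to make the inequality in \textbf{(E)} strict for all pieces.
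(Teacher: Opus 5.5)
Your proposal is correct and is essentially the paper's argument: apply \textbf{(A)} to cut $Q$ into pieces with $2\vol(Q'_k)\le\vol(P)$, use $\vol(P)>0$ to make the inequality strict in both the zero-volume and nonzero-volume cases, apply \textbf{(E)} piece by piece, and concatenate. You are a bit more careful than the paper, which writes the result as $Q\leftarrow\odot_i Q_i\hookrightarrow P^n$ without composing in the cover legs $Q''_k\to Q'_k$ produced by \textbf{(E)} and does not address the $n\ge 1$ requirement; note only that when $Q$ is the empty family no piece can be added on the source side, so the padding must be done on the target, by adding an extra copy of $P$ to the complement of the partial cover.
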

\begin{proof}
 Let $P$ be an object with $\operatorname{vol}(P)>0$ and let $Q$ be any other object. Then by (A), there is a cover $\{Q_i\}_{i\in I}$ of $Q$ such that $2 \operatorname{vol}(Q_i) \leq \operatorname{vol}(P)$. If $\vol(Q_i) = 0$, then $\vol(Q_i)<\vol(P)$ and from \textbf{(E)}, we have an inclusion $Q_i \to P$ in $\mM(\CC)$. Otherwise, $\vol(Q_i)>0$, in which case from \cref{rem: volumes} $\vol(Q_i)<\vol(P)$, and therefore from \textbf{(E)}, we have again have an inclusion $Q_i \to P$.

Concatenating all these inclusions gives us 
$$Q \xleftarrow{} \odot_{i\in I}Q_i \xhookrightarrow{} P^n$$
where $n = \vert I\vert$ proving our claim. 
\end{proof}
\begin{remark}
    While EA condition always implies the existence of Morita objects, a weak assembler can have a Morita object even if it is not EA. Check \cref{ex: Morita object in FinSet}, \cref{ex: Morita object in Restricted Polytopes} below.
\end{remark}
\begin{example}\label{ex: Morita object in FinSet}
\textbf{FinSet} is NOT a weak EA-assembler with respect to cardinality function. Regardless, the category has Morita objects. In particular $\{1\}$ is a Morita object with one choice of inclusion for a given set $X$ being $X \xhookleftarrow{} X' \xhookrightarrow{} \{1\}^{n}$ where $n = |X|$ and $X' = \odot_{i=1}^n\{x_i \vert x_i\in X\}$.
\end{example}
\begin{example}\label{ex: Morita object in Polytopes}
From \cite[Proposition 5.4]{KupersLemannMalkiewichMillerSroka2024ScissorsAuto} $\PP_G^{\mathcal{E}^n}$ is a weak EA-assembler with the usual volume function whenever $G$ contains a group of translations that is dense in $\bbR^n$. In particular, $\PP_G^{\mathcal{E}^n}$ is weak EA-assembler for $G = \bbR^n$ (the translation isometries) and $G = \bbR^n \rtimes O_n$ (the full isometry group). For example, $[0,1]$ is a Morita object in $\PP_G^{\mathcal{E}^1}$ for these choices of $G$.

From \cite[Proposition 5.12]{KupersLemannMalkiewichMillerSroka2024ScissorsAuto} $\PP_G^{\mathcal{H}^n}$ and $\PP_G^{\mathcal{S}^n}$ are EA-assemblers with hyperbolic and spherical volume respectively, provided that $G$ contains a dense subgroup of orientation preserving isometries so they also have Morita objects. 

On the other hand, if $G$ is chosen to be trivial, the category of no-move polytopes $\PP^{X}_1$ for any $X= \mathcal{E}^n,\mathcal{S}^n$ or $ \mathcal{H}^n$, is not a weak EA-assembler with respect to the respective volume function. The morphisms in this category are just set inclusions. It is easy to see that this category has no Morita object: for any choice of object $P$, there exists an object $Q$ with volume less that $P$ and greater than zero such that $Q \not\subset P$.
\end{example}
\begin{example}\label{ex: Morita object in Restricted Polytopes}
Restricted polytopes are not necessarily always weak EA-assemblers with respect to the volume function. In particular if the volume function takes discrete values and there are multiple objects of minimum non-zero volume, then a pair of such objects will never satisfy \cref{def: EA weak assembler}\textbf{(A)}. \cite[Section 6]{KupersLemannMalkiewichMillerSroka2024ScissorsAuto} has examples of some restricted polytope categories that are EA. However, various categories of restricted polytopes that are not EA still have Morita objects:
\begin{enumerate}
    \item The category $\PP^{[0,1]}_1$ is not EA for same reason as $\PP^{\mathcal{E}^n}_1$. But unlike $\PP^{\mathcal{E}^n}_1$, in $\PP^{[0,1]}_1$, $[0,1]$ is a Morita object because every other object of $\PP^{[0,1]}_1$ includes into $[0,1]$. In fact this is the only Morita object in $\PP^{[0,1]}_1$ since $[0,1]$ cannot include into any other object of $\PP^{[0,1]}_1$. Using similar reasoning, $[0,1]$ is a Morita object of $\PP^{[0,1]}_{C_2}$.
    \item The category $\PP^{n}_{G_{\mathcal{L}_n}}$ is not EA. But, in both the cases, for $G = \bbZ$ and for $G= \bbZ\rtimes C_2$, $[0,1/n]$ is a Morita object. Anything that has same length or less includes into $[0,1/n]$ by translation for example. Anything that is bigger can be covered by intervals of length $1/n$ and each of them can be included into $[0,1/n]$.
    \item The category $\PP^{m,m}_{G_{\mathcal{L}_{m,m}}}$, by similar arguments, is not EA but for both $G = \bbZ^2$ and $G= \bbZ\rtimes D_4$, $I_m\times I_m$ is a Morita object.
\end{enumerate}
\end{example}
\begin{remark}\label{rem: inclusion of morita object is DK}
    The inclusion functor $i: \bbZ P(\CC) \to \bbZ\mM(\CC)$ is a Dwyer-Kan embedding (full and faithful additive functor) since $\bbZ P(\CC)$ is a full subcategory of $\bbZ\mM(\CC)$.
\end{remark}
Next, we define Morita equivalence and prove Morita equivalence of $\bbZ P(\CC)$ and $\bbZ\mM(\CC)$.
\begin{definition}\label{def: Morita Equivalence}
Two $Ab$-enriched categories $\CC$ and $\DD$ are \textit{Morita equivalent} if there exists a functor $F: \CC \to \DD$ which is a Dwyer-Kan embedding and is surjective up to thick closure. The latter means that every object $d \in \obj \DD$ belongs to an enlargement of the set $S:= \{ Fc: c\in \obj\CC\}$ by following these sequence of steps performed iteratively finitely many times:
\begin{itemize}
    \item Take a retract of any object in $S$, and add it to $S$.
    \item Take a pair of composable morphisms $d \to d' \to d''$ in the underlying category $\DD_0$, such that the induced map of modules 
    \begin{equation*}
    \DD(d'',-) \to \DD(d',-) \to \DD(d,-), \ \ \ \DD(-,d) \to \DD(-,d') \to \DD(-,d'')    
    \end{equation*}
 are split short exact sequences, i.e. they are split short exact sequences of abelian groups for every input in $-$. Then, if two of $d, d', d''$ are in $S$, add the third one to $S$.       
\end{itemize}
\end{definition}

\begin{proposition}\label{prop: Morita equiv between ZM and ZP}
    Let $\CC$ be a weak assembler and $P \in \obj \bbZ\mM(\CC)$ be a Morita object as in \cref{def: ZP}, then $i: \bbZ P(\CC) \to \bbZ \mM(\CC)$ gives a Morita equivalence between $\bbZ P(\CC)$ and $\bbZ \mM(\CC)$.
\end{proposition}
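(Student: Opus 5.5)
By \cref{rem: inclusion of morita object is DK}, $i$ is already a Dwyer-Kan embedding. So the whole content of the proposition is that $i$ is surjective up to thick closure in the sense of \cref{def: Morita Equivalence}. The plan is to start from $S=\{P\}$ and show that every object $Q$ of $\bbZ\mM(\CC)$ is obtained in finitely many steps: first the powers $P^n$ via the two-out-of-three step, then $Q$ as a retract of some $P^n$.

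\emph{Step 1: $P^n \in$ thick closure.} We induct on $n$, using the composable pair
\[ P \xrightarrow{i_1} P^{n} \xrightarrow{\pi_2\cdots} P^{n-1}, \]
where $i_1$ is the inclusion into the first coordinate and the second map is the projection onto the last $n-1$ coordinates.

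By \cref{rem: Hom of concatenations}, for any object $X$ we have
\[ \Hom_{\bbZ\mM(\CC)}(P^{n},X)\cong \Hom_{\bbZ\mM(\CC)}(P,X)\oplus \Hom_{\bbZ\mM(\CC)}(P^{n-1},X), \]
and similarly in the covariant variable. We then check that under these identifications, precomposition with $i_1$ and with the inclusion of $P^{n-1}$ are exactly the coordinate projections and inclusions of the direct sum. This amounts to checking that $\pi_k\circ i_j$ is $id$ for $j=k$ (by \cref{rem: projection is retract}) and is the zero span through $\emptyset$ for $j\neq k$. The zero-span computation uses \textbf{(D)}: the common refinement of two disjoint coordinates is initial. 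Together with the relation imposed in step (3) of \cref{def: category of scissors correspondences}, this gives $i_1\pi_1+i'\pi'=id_{P^n}$.

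Hence both sequences of representable modules are split short exact. Since $P$ and $P^{n-1}$ lie in $S$, we may add $P^n$ to $S$. For $n=1$ there is nothing to do.

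\emph{Step 2: retracts.} Let $Q$ be arbitrary. The Morita hypothesis gives an inclusion $\iota: Q \xleftarrow{c} Q' \xhookrightarrow{j} P^n$ with $c$ a cover. Define $r: P^n \xhookleftarrow{j} Q' \xrightarrow{c} Q$, which is a projection. Then $r\circ\iota$ is computed by completing the cospan $Q'\xhookrightarrow{} P^n\xhookleftarrow{} Q'$ via \cref{rem:completing cospan of partial covers}. Since the two legs are the same partial cover $j$, and all morphisms are monic by \textbf{(M)}, this completion should refine to $Q'$ itself with identity legs, and by \textbf{(D)} the cross-pieces are initial. The composite is therefore $Q\xleftarrow{c}Q'\xrightarrow{c}Q$, which is equivalent up to refinement to $id_Q$, exactly as in \cref{rem: projection is retract} and \cref{rmk: inverse in GC}. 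So $Q$ is a retract of $P^n\in S$ and is added to the thick closure.

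\emph{Main obstacle.} I expect the hardest point to be the rigorous verification that the completion of the cospan $Q'\xhookrightarrow{j}P^n\xhookleftarrow{j}Q'$ (and, in Step 1, of the cospan between different coordinates) is what I claimed. Common refinements are only defined up to refinement, so one must show that the pieces $S_{Q'\cap Q'}$ can be taken to be $Q'$, and that the off-diagonal pieces are empty and hence zero in $\Hom$. To handle this, I would pick as common refinement the cover $Q'\sqcup (\text{complement pieces})$ coming from the single cover $Q'\sqcup Q''\to P^n$. The pieces are then indexed by pairs of pieces, and disjointness \textbf{(D)} kills every pair except a piece with itself. The remaining relations are bookkeeping with the additivity relation of \cref{def: category of scissors correspondences}.
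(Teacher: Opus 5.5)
Your proposal is correct and follows the paper's proof. It uses the Dwyer--Kan embedding from the remark, realizes every $Q$ as a retract of some $P^n$ via the inclusion/projection pair, and puts $P^n$ in the thick closure by induction from the split sequences given by the direct-sum decomposition of the $\Hom$ groups. You split $P \to P^n \to P^{n-1}$ where the paper uses $P^n \to P^{n+1} \to P$, which is immaterial, and your explicit checks of $\pi_k\circ i_j$ and of the cospan completion fill in details the paper leaves implicit.
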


\begin{proof}
    Since $i$ is a Dwyer-Kan embedding from \cref{rem: inclusion of morita object is DK}, we just need to show $i$ is surjective up to thick closure. 
    
    Since $P$ is a Morita object, given $Q \in \obj \bbZ\mM(\CC)$, there exists an $n$ and an inclusion from $Q$ into $P^n$ $$Q \xleftarrow{} Q' \xhookrightarrow{} P^n.$$ Composing this with the retraction map $P^n \xhookleftarrow{} Q' \xrightarrow{} Q$ gives the identity map $Q \xrightarrow{} Q$ from \cref{rem: projection is retract}. Thus, every $Q$ is a retract of some $P^n$ and it is enough to show that for each $n \geq 1$, $P^n$ is in the thick closure of $P$ in $\bbZ\mM(\CC)$.

    We show this by induction. We know $P$ is in the thick closure of itself. Now assume $P^n$ is in the thick closure of $P$, we want to show so is $P^{n+1}$. Consider the following composite of morphisms in $\bbZ\mM(\CC)$, $P^n \to P^{n+1} \to P$ where the first map is inclusion into the first $n$ copies of $P$ in $P^{n+1}$ and the second map is projection onto the last copy of $P$ in $P^{n+1}$. Now consider the induced map of modules
    $$\bbZ\mM(\CC)(P,-) \to \bbZ\mM(\CC)(P^{n+1},-) \to \bbZ\mM(\CC)(P^n,-).$$
    From \cref{def: category of scissors correspondences}, we know that this is isomorphic to
    $$\bbZ\mM(\CC)(P,-) \to \oplus_{n+1}\bbZ\mM(\CC)(P,-) \to \oplus_{n}\bbZ\mM(\CC)(P,-)$$
    where the first map sends $\bbZ\mM(\CC)(P,-)$ isomorphically to the last summand in $\oplus_{n+1}\bbZ\mM(\CC)(P,-)$ and the second map projects $\oplus_{n+1}\bbZ\mM(\CC)(P,-)$ to its first $n$-coordinates. Similar statements hold for the map of modules
    $$\bbZ\mM(\CC)(-,P^n) \to \bbZ\mM(\CC)(-,P^{n+1}) \to \bbZ\mM(\CC)(-,P).$$ Thus both of these sequences of modules are split short exact sequences and since $P, P^n$ are in thick closure of $P$ so is $P^{n+1}$.
\end{proof}

Now, we want to show Morita invariance of the Hochschild complex, i.e. we want to show that a Morita equivalence $F: \CC \to \DD$ of two $Ab$-enriched categories induces an equivalence of their Hochschild complexes. We define these terminologies first and then show the result. This is a well-known fact and we are mostly taking this argument from \cite{CaryMorita}, \cite{BM-Morita}. However, we could not find the argument written out explicitly for the case of $Ab$-enriched categories using the notion of Morita equivalence of \cref{def: Morita Equivalence} so we are giving some concise arguments below. For more details, we will refer the readers to \cite{CaryMorita}, \cite{BM-Morita} which contains the full argument in the setting of spectrally enriched categories. 

First, for $\CC$, $\DD$ $Ab$-enriched categories, we define the notion of $\CC$-modules and $\CC,\DD$-bimodules in simplicial abelian groups $sAb$. 
\begin{definition}\label{def: C-modules}
    $M$ is a \textit{left $\CC$-module} in $sAb$ if there exists a set map from $\obj\CC$ to $sAb$ along with multiplication maps $$\CC(c',c) \otimes M(c)\to M(c')$$ for every $c,c' \in \obj\CC$ which are associative and unital. Similarly, one can define right $\CC$-modules in $sAb$. Two left $\CC$-modules $M$ and $N$ are isomorphic if for every object $c$, $M(c) \cong N(c)$ via maps that respect the module structure. They are simplicial homotopy equivalent if for every $c$, $M(c)$ and $N(c)$ are simplicial homotopy equivalent via maps that respect the module structure and weakly equivalent if for every $c$, $M(c)$ and $N(c)$ are weakly equivalent.
\end{definition}
\begin{definition}\label{def: C,D-bimodules}
    $M$ is a \textit{$\CC,\DD$-bimodule} in $sAb$ if there exists a set map from $\obj\CC \times \obj\DD$ to $sAb$ along with multiplication maps $$\CC(c',c) \otimes M(c,d) \to M(c',d) \ \text{and} \ M(c,d) \otimes \DD(d,d') \to M(c,d')$$ for every $c,c' \in \obj\CC$ and $d,d' \in \obj\DD$ which are associative and unital. Additionally, the two maps $$\CC(c',c) \otimes M(c,d) \otimes \DD(d,d') \to M(c',d')$$ must agree with each other. Two $\CC,\DD$-bimodules $M$ and $N$ are isomorphic if for every pair $(c,d)$, $M(c,d) \cong N(c,d)$ via maps that respect the bimodule structure. They are simplicial homotopy equivalent if for every $(c,d)$, $M(c,d)$ and $N(c,d)$ are simplicial homotopy equivalent via maps that respect the bimodule structure and weakly equivalent if for every $(c,d)$, $M(c,d)$ and $N(c,d)$ are weakly equivalent.
\end{definition}
\begin{remark}
    Note that $\CC$ can be thought of as a $\CC,\CC$-bimodule by associating to a pair $(c,d)$, the constant simplicial abelian group $\CC(c,d)$.
\end{remark}
\begin{definition}\label{def: cyclic bar complex or Hochschild complex}
    For an $Ab$-enriched category $\CC$ and a $\CC,\CC$-bimodule $M$, consider the cyclic simplicial abelian group which at cyclic level $n$ is given by $$\bigoplus_{c_0, c_1, \cdots, c_n \in \obj\CC}\CC(c_0,c_1)\otimes \CC(c_1, c_2)\otimes\cdots \otimes \CC(c_{n-1}, c_n)\otimes M(c_n,c_0)$$
    with the face maps and degeneracy maps (in the cyclic direction) given by composing a term in the tensor product with the next one (the second last and last face maps use the left and right module structure of the bimodule $M$) and inserting identity morphism respectively. We define the \textit{Hochschild complex of $\CC$ with coefficients in a $\CC,\CC$-bimodule $M$}, denoted as $HH(\CC;M)$, to be the cyclic abelian group given by the diagonal of this cyclic simplicial abelian group. In other words, the $n$th level of the cyclic complex $HH(\CC;M)$ is given by
    $$HH(\CC;M)_n = \bigoplus_{c_0, c_1, \cdots, c_n \in \obj\CC}\CC(c_0,c_1)\otimes \CC(c_1, c_2)\otimes\cdots \otimes \CC(c_{n-1}, c_n)\otimes M(c_n,c_0)_n$$
    where by $M(c_n,c_0)_n$ we mean the $n$th level of the simplicial abelian group $M(c_n,c_0)$.
    
    When $M$ is the $\CC,\CC$-bimodule $\CC$ itself, we sometimes omit the coefficient and call the complex the \textit{Hochschild complex of $\CC$}, $HH(\CC)$, as this agrees with the classical definition of Hochschild complex of $\CC$ as in \cref{Notations}(\ref{Not:cyclic_bar_complex}). 
    
    Using Dold-Kan, we equivalently get a chain complex which we also call the Hochschild complex of $\CC$ with coefficients in $M$. The homotopy of this simplicial abelian group or, equivalently, the homology of the associated chain complex is defined as the \textit{Hochschild homology of $\CC$ with coefficients in $M$} and denoted by $HH_{\ast}(\CC;M)$. We denote the $n$th Hochschild homology group as $HH_n(\CC;M)$. Note the distinction in notation from the $n$th level of the Hochschild complex, $HH(\CC;M)_n$, above.
\end{definition}
\begin{definition}
  Given a left $\CC$-module $N$ and a right $\CC$-module $M$, the \textit{double sided bar complex} $B(M;\CC;N)$ is a simplicial abelian group with level $n$ given by $$\oplus_{\{c_0,c_1,\cdots ,c_n\}}M(c_0)_n\otimes \CC(c_0,c_1)\otimes \CC(c_1,c_2)\otimes \cdots \otimes \CC(c_{n-1},c_n)\otimes N(c_n)_n$$ and the $i^{th}$ face and degeneracy maps given by multiplying $i^{th}$ term with the next one or inserting unit at the $i^{th}$ spot. Here by $M(c_0)_n$ we mean the $n$th level of the simplicial abelian group $M(c_0)$ and similarly for $N(c_n)_n$. 
\end{definition}
\begin{remark}\label{rem: double-sided Bar complex as bimodule}
    If $M$ and $N$ in the definition above have extra module structures on the other side, for example if $M$ is a left $\DD$-module and $N$ is a right $\EE$-module, then we can define a new $\DD,\EE$-bimodule via $(d,e) \mapsto B(M(d,-);\CC;N(-,e))$.
\end{remark}
\begin{remark}\label{rem:pullback modules}
   Let $F: \CC \to \DD$ be a functor of $Ab$-categories. Then given a left $\DD$-module $M$, we can take pullback along $F$ to get a left $\CC$-module ${}_FM$, where ${}_FM(c):= M(Fc)$ and the multiplication maps $\CC(c',c)\otimes {}_FM(c)\to {}_FM(c')$ are given by $\DD(Fc',Fc)\otimes M(Fc)\to M(Fc')$. Similarly, we can pullback along right a right $\DD$-module, and pullback on either or both sides a $\DD,\DD$-bimodule.
\end{remark}
\begin{theorem}\label{thm: Morita_Invariance_Hochschild_homology}
Let $\CC$ and $\DD$ be two $Ab$-enriched categories such that there exists $F: \CC \to \DD$ giving a Morita equivalence between them. Then their Hochschild complexes are equivalent as simplicial abelian groups, i.e. $HH(\CC) \simeq HH(\DD)$.
\end{theorem}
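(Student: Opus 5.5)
The plan follows the Blumberg--Mandell strategy, specialized to $Ab$-enriched categories. I will show that $F$ induces an equivalence $HH(\CC) \simeq HH(\CC; {}_F\DD_F)$ and that the natural map $HH(\CC;{}_F\DD_F)\to HH(\DD)$ is also an equivalence.

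\emph{Step 1 (naturality and the first map).} The functor $F$ induces a simplicial map
\[
HH(\CC)\to HH(\CC;{}_F\DD_F)\to HH(\DD),
\]
where ${}_F\DD_F$ is the pullback bimodule of \cref{rem:pullback modules}. The first map is induced by the bimodule map $\CC\to{}_F\DD_F$. Since $F$ is a Dwyer--Kan embedding, this bimodule map is an isomorphism, so the first map is an isomorphism of simplicial abelian groups. The second map applies $F$ to the $\CC(c_i,c_{i+1})$ factors.

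\emph{Step 2 (a two-sided bar description).} I will show that the second map is a weak equivalence. The first tool is the standard fact that for a $\DD,\DD$-bimodule $M$, the simplicial abelian group $HH(\DD;M)$ is weakly equivalent to $HH(\DD; B(\DD;\DD;M))$. The second tool is the cyclic invariance identity
\[
HH(\CC;\, B({}_F\DD;\DD;M_F)) \simeq HH(\DD;\, B(\DD_F;\CC;{}_F\DD)\otimes_\DD M).
\]
Both follow from extra degeneracies, which give simplicial homotopy equivalences $B(\DD(-,d);\DD;N)\simeq N(d)$ by contracting along the identity of $d$. Combining these tools reduces the claim to the following: the natural map of $\DD,\DD$-bimodules
\[
B(\DD(-,F\,\cdot);\CC;\DD(F\,\cdot,-))\to \DD
\]
given by composition is a weak equivalence. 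Concretely, for each pair $(d,d')$ I need to show that
\[
B(\DD(d,F-);\CC;\DD(F-,d'))\to\DD(d,d')
\]
is a weak equivalence.

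\emph{Step 3 (thick closure).} Fix $d'$ and let $T$ be the class of objects $d$ for which the map in Step 2 is a weak equivalence. I will show that $T$ contains the image of $F$ and is closed under the two operations of \cref{def: Morita Equivalence}; then $T$ is all of $\obj\DD$.
\begin{itemize}
\item For $d=Fc$: full faithfulness identifies $\DD(Fc,F-)\cong\CC(c,-)$, and the extra degeneracy contracts the bar construction onto $\DD(Fc,d')$.
\item Retracts: both sides are additive functors of $d$, so a retract diagram in $\DD$ gives a retract of maps of simplicial abelian groups, and weak equivalences are closed under retracts.
\item Split short exact sequences: if $\DD(d'',-)\to\DD(d',-)\to\DD(d,-)$ is levelwise split exact, then applying $B(-;\CC;\DD(F-,d'))$ levelwise gives split short exact sequences of simplicial abelian groups, mapping to the split exact sequence of the $\DD(-,d')$ terms. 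The long exact sequences in homotopy and the five lemma then give two-out-of-three.
\end{itemize}
Running the same argument in the other variable handles the right module side.

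The main obstacle I expect is Step 2, namely setting up the cyclic-invariance identification correctly on the diagonal simplicial structure of \cref{def: cyclic bar complex or Hochschild complex}. That identification is a bisimplicial argument: I would write both sides as diagonals of the same bisimplicial abelian group, built from chains in $\CC$ and $\DD$ interleaved, and use that levelwise weak equivalences of bisimplicial abelian groups induce weak equivalences on diagonals. For the full verification of this step I would defer to \cite{BM-Morita}, \cite{CaryMorita}, noting that their spectral arguments specialize verbatim to $Ab$ via the Eilenberg--MacLane embedding.
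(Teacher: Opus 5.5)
This is the same argument as the paper's. Your Steps 1--2 reproduce the bottom row of the paper's commuting ladder (the Bar lemma, the rotator isomorphism of \cref{lemma:Rotator_Isomorphism}, and \cref{lemma:Key_Morita_lemma}), and your thick-closure induction in Step 3 is a valid proof of the part of \cref{lemma:Key_Morita_lemma} that the paper only cites from \cite{CaryMorita}.

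There are two minor caveats. First, these tools give a zigzag $HH(\CC;{}_F\DD_F)\simeq HH(\DD)$ rather than showing that the $F$-induced map itself is an equivalence. The paper gets that from the top row of its ladder, but a zigzag is enough for the statement. Second, Step 3 only produces objectwise \emph{weak} equivalences, and $HH(\DD;-)$ is guaranteed to preserve these only when the $\Hom$ groups are flat (e.g.\ free, as in $\bbZ\mM(\CC)$). For arbitrary $Ab$-categories you should upgrade to objectwise homotopy equivalences. This is possible because split exactness of the representable modules forces $d'\cong d\oplus d''$ by Yoneda, so the closure steps reduce to finite sums and retracts, both of which preserve homotopy equivalences.
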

\begin{proof}
    This is a standard fact and we omit a proof in full details here. We refer the reader to \cite[Section 3]{CaryMorita} for details.
    The idea is that \cite[Lemma 3.3 and Lemma 3.5]{CaryMorita} hold, for convenience we state them again below as \cref{lemma:Key_Morita_lemma} and \cref{lemma:Rotator_Isomorphism} for $Ab$-enriched categories. Then we can directly conclude the result using the following commuting diagram
    \[
\begin{tikzcd}
    HH(\CC;\CC) \ar[d,"\cong"] & HH(\CC;B(\CC;\CC;\CC)) \ar[l,"\cref{lemma:Bar_Lemma}","\simeq"'] \ar[r,"\cong","\cref{lemma:Rotator_Isomorphism}"'] \ar[d] & HH(\CC;B(\CC;\CC;\CC)) \ar[d] \ar[r,"\simeq","\cref{lemma:Bar_Lemma}"'] & HH(\CC;\CC) \ar[d]\\
    HH(\CC;{}_F\DD_F) & HH(\CC;B({}_F\DD;\DD;\DD_F)) \ar[l,"\cref{lemma:Bar_Lemma}","\simeq"'] \ar[r,"\cong","\cref{lemma:Rotator_Isomorphism}"'] & HH(\DD;B(\DD_F;\CC;{}_F\DD)) \ar[r,"\simeq","\cref{lemma:Key_Morita_lemma}"'] & HH(\DD;\DD)
\end{tikzcd}
\]
where all the vertical maps are induced by $F$. The first vertical map is an isomorphism due to $F$ being Dwyer-Kan embedding and the middle two vertical maps are induced by $F$ using \cref{rem:pullback modules}.
This gives us the required equivalence between $HH(\CC)$ and $HH(\DD)$.
\end{proof}
\begin{lemma}\label{lemma:Bar_Lemma}\cite[Lemma 1.4, Bar Lemma]{CaryMorita}
    For any $\CC,\DD$-bimodule $M$, there are canonical simplicial homotopy equivalences between $B(\CC(c,-);\CC(-,-);M(-,d))$ and $M(c,d)$. Similarly for any $\DD, \CC$-bimodule $N$, there are canonical simplicial homotopy equivalences between $B(N(d,-);\CC(-,-);\CC(-,c))$ and $N(d,c)$. One side maps in each of these homotopy equivalences respect the module maps, thus inducing weak equivalence of modules. Thus, we can say that the $\CC,\DD$-bimodules $B(\CC;\CC;M)$ and $M$ are equivalent and the $\DD, \CC$-bimodules $B(N;\CC;\CC)$ and $N$ are equivalent.
\end{lemma}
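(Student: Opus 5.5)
We prove the first claim by writing down an explicit contracting homotopy for the augmented bar construction; the second claim is the mirror image. Fix $c\in\obj\CC$ and $d\in\obj\DD$. The bar construction $B(\CC(c,-);\CC;M(-,d))$ has $n$-simplices
\[
\bigoplus_{c_0,\dots,c_n}\CC(c,c_0)\otimes\CC(c_0,c_1)\otimes\cdots\otimes\CC(c_{n-1},c_n)\otimes M(c_n,d)_n .
\]
The augmentation $\varepsilon\colon B(\CC(c,-);\CC;M(-,d))\to M(c,d)$ composes everything down, i.e.\ it applies the left $\CC$-action on $M$:
\[
f\otimes f_1\otimes\cdots\otimes f_n\otimes m\ \mapsto\ (f f_1\cdots f_n)\cdot m .
\]
Associativity of the action shows $\varepsilon$ commutes with the face maps. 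Unitality shows it commutes with the degeneracies. So $\varepsilon$ is a simplicial map.

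In the other direction, define $\eta\colon M(c,d)\to B(\CC(c,-);\CC;M(-,d))$ levelwise by inserting identities: $m\mapsto \mathrm{id}_c\otimes \mathrm{id}_c\otimes\cdots\otimes\mathrm{id}_c\otimes m$, with all $c_i=c$. This is simplicial, since faces and degeneracies of a string of identities are again strings of identities. Moreover $\varepsilon\eta=\mathrm{id}$ by unitality.

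For the other composite we use the extra degeneracy
\[
s_{-1}(f\otimes f_1\otimes\cdots\otimes f_n\otimes m)=\mathrm{id}_c\otimes f\otimes f_1\otimes\cdots\otimes f_n\otimes m ,
\]
which moves $f$ into the $\CC$-tensor slots. One checks the usual identities: $d_0s_{-1}=\mathrm{id}$, $d_{i+1}s_{-1}=s_{-1}d_i$, and $s_{j+1}s_{-1}=s_{-1}s_j$. One subtlety is that the coefficient $M(c_n,d)$ is itself simplicial and we are on the diagonal. Since $s_{-1}$ does not touch $m$, this is harmless as long as the simplicial direction of $M$ is handled the same way everywhere; concretely, we can first treat each simplicial level of $M$ separately and then pass to the diagonal. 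The standard argument then shows that an augmented simplicial object with extra degeneracies is simplicially homotopy equivalent to its augmentation. Concretely, $h_i=s_{-1}^{\,i+1}$-type formulas (iterating the extra degeneracy) give a simplicial homotopy $\eta\varepsilon\simeq\mathrm{id}$. So $\varepsilon$ and $\eta$ are inverse simplicial homotopy equivalences.

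Next we check compatibility with module structures. The augmentation $\varepsilon$ is natural in $c$, because precomposition acts on the first factor and commutes with composing down. It is also natural in $d$, because $M$ is a bimodule and the two actions commute. Hence $\varepsilon$ is a map of $\CC,\DD$-bimodules. It is levelwise a homotopy equivalence of underlying simplicial abelian groups, so it is a weak equivalence of bimodules. By contrast, $\eta$ is only natural in $d$. This is the ``one side'' statement in the lemma.

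The case $B(N(d,-);\CC;\CC(-,c))\to N(d,c)$ is symmetric: the extra degeneracy now inserts $\mathrm{id}_c$ on the right.

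The main obstacle is bookkeeping: getting the indices of the extra-degeneracy identities right, and making sure the diagonal simplicial structure is respected. We deal with this by first working at a fixed simplicial level of $M$, where everything is a bisimplicial contraction, and then taking the diagonal, which preserves levelwise homotopy equivalences.
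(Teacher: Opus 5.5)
Your proof is correct and is the standard extra-degeneracy argument for the bar lemma, which is how the cited \cite[Lemma 1.4]{CaryMorita} is proved; the paper itself gives no proof beyond that citation. Your treatment of the diagonal is the right fix for the fact that $s_{-1}$ is not defined on the diagonal: contract in the bar direction, then pass to the diagonal. It works because your contraction does not touch $m$, so it is natural in the internal simplicial degree of $M$ and hence defines a horizontal simplicial homotopy of bisimplicial abelian groups.
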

\begin{lemma}\label{lemma:Key_Morita_lemma}\cite[Lemma 3.3]{CaryMorita}
    If $F: \CC \to \DD$ is a Morita equivalence, then the following maps are equivalences of $\CC,\CC$-bimodules and $\DD,\DD$-bimodules respectively
    \begin{align*}
        \CC &\to {}_F\DD_F \\
        B(\DD_F;\CC;{}_F\DD) &\to B(\DD;\DD;\DD)\simeq \DD
    \end{align*}
    where the first and second maps are induced by $F$ and the last equivalence is from Bar lemma, \cref{lemma:Bar_Lemma}.
\end{lemma}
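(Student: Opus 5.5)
The first map $\CC \to {}_F\DD_F$ is the easy half. At $(c,c')$ it is the map $\CC(c,c') \to \DD(Fc,Fc')$ induced by $F$, viewed as a map of constant simplicial abelian groups. It respects both module structures because $F$ is a functor. Since $F$ is a Dwyer--Kan embedding (fully faithful), this map is an isomorphism for every pair $(c,c')$. Hence it is an isomorphism of $\CC,\CC$-bimodules, and in particular an equivalence.

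For the second map $\Phi: B(\DD_F;\CC;{}_F\DD) \to B(\DD;\DD;\DD)$, I would use the map induced levelwise by $F$ on the middle $\CC$-terms. Precisely, at $(d,d')$ it sends $\DD(d,Fc_0)\otimes\CC(c_0,c_1)\otimes\cdots\otimes\DD(Fc_n,d')$ to the summand indexed by $(Fc_0,\dots,Fc_n)$. Composing with the Bar lemma augmentation $B(\DD;\DD;\DD)\to\DD$ (\cref{lemma:Bar_Lemma}) gives the composition map $\epsilon_{d,d'}: B(\DD(d,F-);\CC;\DD(F-,d'))\to \DD(d,d')$. I would then fix $d'$ and define $\mathcal{S}$ to be the class of objects $d$ for which $\epsilon_{d,d'}$ is a weak equivalence for all $d'$. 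The aim is to show $\mathcal{S}$ contains $F(\obj\CC)$ and is closed under the two thick-closure operations of \cref{def: Morita Equivalence}; surjectivity up to thick closure then gives $\mathcal{S}=\obj\DD$. Since a weak equivalence of bimodules is checked objectwise, the claim follows.

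\textbf{Base case.} For $d=Fc$, full faithfulness gives $\DD(Fc,F-)\cong\CC(c,-)$ as right $\CC$-modules. So $\epsilon_{Fc,d'}$ becomes the map $B(\CC(c,-);\CC;{}_F\DD(-,d'))\to \DD(Fc,d')={}_F\DD(c,d')$, which is a simplicial homotopy equivalence by \cref{lemma:Bar_Lemma}.

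\textbf{Closure steps.} The source and target of $\epsilon_{d,d'}$ are additive functors of the representable $\DD(d,-)$ restricted along $F$. For a retract $d$ of $d_1\in\mathcal{S}$ (with $d\to d_1\to d$ the identity), functoriality in $d$ makes $\epsilon_{d,d'}$ a retract of the weak equivalence $\epsilon_{d_1,d'}$, so it is itself a weak equivalence. For a sequence $d\to d_1\to d_2$ whose induced sequences $\DD(d_2,-)\to\DD(d_1,-)\to\DD(d,-)$ of representables are split short exact, I would tensor levelwise with the bar construction; splitness makes this exact. This yields a map of short exact sequences of simplicial abelian groups from the bar side to the $\DD(-,d')$ side. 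The long exact sequences in homotopy and the five lemma then give two-out-of-three. The hypothesis that both the covariant and the contravariant sequences split is what lets the same argument also run for the symmetric statement in $d'$.

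The main obstacle is this two-out-of-three step. Care is needed about which variance of the split exact sequence is used, and one must make sure the levelwise tensoring really preserves exactness. Since everything is split, this should be straightforward, and I would rely on \cite[Lemma 3.3]{CaryMorita} for the bookkeeping.
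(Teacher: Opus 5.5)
Your proposal is correct and is essentially the thick-closure argument behind \cite[Lemma 3.3]{CaryMorita}, which the paper cites instead of reproving: full faithfulness gives $\CC\cong{}_F\DD_F$, and for the second map the bar lemma handles $d=Fc$, while naturality in $d$ and levelwise split exactness (with the long exact sequence and the five lemma) give closure under retracts and two-out-of-three. The closing remark about a symmetric argument in $d'$ is superfluous: your base case already holds for every $d'$, so a single induction on $d$ suffices, and only the sequence $\DD(d_2,-)\to\DD(d_1,-)\to\DD(d,-)$ is ever used.
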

\begin{lemma}\label{lemma:Rotator_Isomorphism}\cite[Lemma 3.5, Dennis-Waldhausen-Morita argument]{CaryMorita}
    For each $\CC, \DD$-bimodule $M$ and $\DD, \CC$-bimodule $N$, there is a natural rotator isomorphism 
    $$HH(\CC;B(M;\DD;N))\cong HH(\DD;B(N;\CC;M)).$$
\end{lemma}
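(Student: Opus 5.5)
The rotator isomorphism is checked levelwise. First I will write both sides out explicitly. By \cref{def: cyclic bar complex or Hochschild complex}, the $n$th level of $HH(\CC;B(M;\DD;N))$ is a sum over $c_0,\dots,c_n\in\obj\CC$ of
\[
\CC(c_0,c_1)\otimes\cdots\otimes\CC(c_{n-1},c_n)\otimes B(M(c_n,-);\DD;N(-,c_0))_n .
\]
Expanding the bar complex, this becomes a sum over $c_0,\dots,c_n\in\obj\CC$ and $d_0,\dots,d_n\in\obj\DD$ of
\[
\CC(c_0,c_1)\otimes\cdots\otimes\CC(c_{n-1},c_n)\otimes M(c_n,d_0)_n\otimes\DD(d_0,d_1)\otimes\cdots\otimes\DD(d_{n-1},d_n)\otimes N(d_n,c_0)_n .
\]
The same expansion of $HH(\DD;B(N;\CC;M))_n$ gives the sum of
\[
\DD(d_0,d_1)\otimes\cdots\otimes\DD(d_{n-1},d_n)\otimes N(d_n,c_0)_n\otimes\CC(c_0,c_1)\otimes\cdots\otimes\CC(c_{n-1},c_n)\otimes M(c_n,d_0)_n .
\]
These two are the same cyclically arranged tensor word, just read from a different starting point. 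So I define the rotator $\rho_n$ as the symmetry isomorphism of $\otimes$ in $Ab$ that moves the block $\DD(d_0,d_1)\otimes\cdots\otimes N(d_n,c_0)_n$ to the front. It is a levelwise isomorphism, and it is natural in $M$ and $N$ because it only permutes tensor factors.

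Next I need $\rho$ to commute with the simplicial structure maps. The key observation is that both sides are diagonals of the same bisimplicial abelian group. One simplicial direction, the $\CC$-direction, is formed by the cyclic word $\CC(c_0,c_1),\dots,\CC(c_{n-1},c_n)$ together with the block $X=M\otimes\DD^{\otimes}\otimes N$, which is treated as a single factor. In $HH(\CC;-)$, the faces $d_0,\dots,d_{n-1}$ compose adjacent $\CC$-factors. The last face $d_n$ multiplies $X$ on the left by $\CC(c_{n-1},c_n)$ through the right $\CC$-action on $M$, and wraps $\CC(c_0,c_1)$ around to act on $N$. The other direction, the $\DD$-direction, is formed by the bar faces inside $X$: $M$ is acted on by the first $\DD$-factor, then adjacent $\DD$-factors are composed, then $N$ absorbs the last one. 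The simplicial structure of $M$ and $N$ is also carried along in the diagonal.

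On the other side, $HH(\DD;B(N;\CC;M))$ has exactly the same two families of faces, with the roles of $HH$-faces and bar-faces exchanged. So I will show that $\rho$ extends to an isomorphism of bisimplicial objects, indexed by $(p,q)$, that swaps the two simplicial directions, sending bidegree $(p,q)$ to $(q,p)$. Taking diagonals then gives a simplicial isomorphism, since the diagonal is invariant under transposing a bisimplicial object. Degeneracies insert identities, either $\mathrm{id}_c$ or $\mathrm{id}_d$, and obviously commute with the permutation.

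I expect the main obstacle to be bookkeeping at the two boundary faces, where the cyclic wrap-around meets a module action. For example, the face that multiplies $\CC(c_{n-1},c_n)$ into $M(c_n,d_0)$ in $HH(\CC;B(M;\DD;N))$ must match the face of the bar complex $B(N;\CC;M)$ that composes the last $\CC$-factor into $M$. I would verify this case by case: interior faces, the face at $M$, and the face at $N$. Each check should use only associativity of the actions and the bimodule compatibility in \cref{def: C,D-bimodules}. For the two faces that rotate a factor across the boundary, I would also apply the symmetry of $\otimes$ once more. The bisimplicial reindexing should make this rigorous, and it yields the natural isomorphism claimed in \cref{lemma:Rotator_Isomorphism}.
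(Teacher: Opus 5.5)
Your proposal is correct and follows the same route as the source the paper defers to (the paper gives no proof of its own and cites \cite[Lemma 3.5]{CaryMorita}): both sides are the diagonal of a single bisimplicial object, up to transposing the $\CC$- and $\DD$-directions, and the rotator is the cyclic reordering of the tensor word.

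When you do the deferred bookkeeping, the faces have to match index by index, and this needs a consistent convention: writing the bimodule last, $d_0$ is the wrap-around face (the right $\CC$-action on $N$), $d_1,\dots,d_{p-1}$ compose the $p-1$ adjacent pairs of $\CC$-factors, and $d_p$ is the \emph{left} $\CC$-action on $M$, which match exactly $d_0,\dots,d_p$ of $B(N;\CC;M)$. Your current description gets this wrong in three places: it counts too many interior faces, calls the action on $M$ a right action, and merges two distinct faces into $d_n$. The ordering suggested in \cref{def: cyclic bar complex or Hochschild complex}, with the left action as $d_{p-1}$ and the wrap-around last, would also fail to match: it is off from the bar faces by a cyclic shift.
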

From \cref{prop: Morita equiv between ZM and ZP} and \cref{thm: Morita_Invariance_Hochschild_homology}, we get
\begin{corollary}\label{corr: HH of ZM and ZP}
   $HH(\bbZ\mM(\CC))\simeq HH(\bbZ P(\CC))$. In particular, $HH_{\ast}\bbZ\mM(\CC) \cong HH_{\ast}\bbZ P(\CC)$.
\end{corollary}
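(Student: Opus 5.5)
This corollary is a direct combination of two earlier results, so the plan is to chain them and then pass to homotopy groups. First, I would fix a Morita object $P$ as in \cref{def: ZP}; the statement implicitly assumes one exists. By \cref{prop: Morita equiv between ZM and ZP}, the inclusion $i\colon \bbZ P(\CC)\to \bbZ\mM(\CC)$ is a Morita equivalence of $Ab$-enriched categories in the sense of \cref{def: Morita Equivalence}. It is a Dwyer--Kan embedding by \cref{rem: inclusion of morita object is DK}, and it is surjective up to thick closure because every $Q$ is a retract of some $P^n$ and each $P^n$ is built from $P$ by split short exact sequences.

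Second, I would apply \cref{thm: Morita_Invariance_Hochschild_homology} with $\CC$ replaced by $\bbZ P(\CC)$, $\DD$ by $\bbZ\mM(\CC)$ and $F=i$. The relevant zigzag is the diagram in that proof. Its left arrows are the simplicial homotopy equivalences coming from the Bar Lemma \cref{lemma:Bar_Lemma}, its middle arrows are the rotator isomorphisms of \cref{lemma:Rotator_Isomorphism}, and its right arrow is the equivalence $B(\DD_F;\CC;{}_F\DD)\to\DD$ of \cref{lemma:Key_Morita_lemma}. Together these give $HH(\bbZ P(\CC))\simeq HH(\bbZ\mM(\CC))$ as simplicial abelian groups.

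Third, I would pass to homotopy. Every map in the zigzag is a weak equivalence of simplicial abelian groups, so each induces an isomorphism on $\pi_\ast$. By Dold--Kan, these homotopy groups agree with the homology of the associated normalized chain complexes, which is $HH_\ast$ by \cref{def: cyclic bar complex or Hochschild complex}. Composing the isomorphisms along the zigzag gives $HH_\ast\bbZ\mM(\CC)\cong HH_\ast\bbZ P(\CC)$.

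The only point needing care is the step in \cref{lemma:Key_Morita_lemma}. There, the equivalence of bimodules must be applied inside $HH(\DD;-)$, so one needs to know that Hochschild complexes carry levelwise weak equivalences of bimodules to weak equivalences. I would argue this via the diagonal: $HH(\DD;M)$ is the diagonal of a bisimplicial abelian group, and a levelwise equivalence of its columns gives an equivalence on diagonals by the standard realization lemma. With that in hand, the corollary is immediate.
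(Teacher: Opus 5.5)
Your proposal is correct and is essentially the paper's argument: the corollary is stated as an immediate consequence of \cref{prop: Morita equiv between ZM and ZP}, applied to the inclusion $i\colon \bbZ P(\CC)\to\bbZ\mM(\CC)$, together with \cref{thm: Morita_Invariance_Hochschild_homology}, and the statement about $HH_\ast$ follows by taking homotopy groups along the zigzag. Your extra care about using a levelwise equivalence of bimodules inside $HH(\DD;-)$ covers a detail the paper leaves to the cited reference; note that the column-wise step of your diagonal argument needs one of two facts so that tensoring with $\Hom$ groups preserves the equivalences: either the $\Hom$ groups of $\bbZ\mM(\CC)$ are free (hence flat) abelian groups, or the relevant bimodule maps are simplicial homotopy equivalences rather than just weak equivalences.
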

\begin{lemma}\label{lemma: Hochschild homology of weak assembler}
The Hochschild complex of the one object category $\bbZ P(\CC)$ is isomorphic to the Hochschild complex of the ring of endomorphisms of $P$ in $\bbZ\mM(\CC)$ (equivalently, the ring of endomorphisms of $P$ in $\bbZ P(\CC)$), $\Hom_{\bbZ\mM(\CC)}(P,P)$, where the ring operation is composition. In particular, we have 
$$HH_{\ast}(\bbZ P(\CC)) \cong HH_{\ast}(\Hom_{\bbZ\mM(\CC)}(P,P)).$$    
\end{lemma}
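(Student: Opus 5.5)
This is essentially an unwinding of the definitions in \cref{def: cyclic bar complex or Hochschild complex}. We specialise that definition to $\CC = \bbZ P(\CC)$ and to the bimodule $M = \bbZ P(\CC)$ itself, and compare the result levelwise with the classical cyclic bar complex of the ring $R := \Hom_{\bbZ\mM(\CC)}(P,P)$ from \cref{Notations}(\ref{Not:cyclic_bar_complex}).

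First, $R$ is a unital associative ring. The additive structure is the abelian group from \cref{def: category of scissors correspondences}, and multiplication is composition in $\bbZ\mM(\CC)$. Bilinearity of composition is exactly the statement that $\bbZ\mM(\CC)$ is $Ab$-enriched, and the unit is the identity span $P \xhookleftarrow{} P \xhookrightarrow{} P$. Since $\bbZ P(\CC)$ is the full subcategory on $P$, its only hom group is $\bbZ P(\CC)(P,P) = R$. This also justifies the parenthetical ``equivalently'' in the statement.

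Next, compare the two complexes level by level. There is only one object, so the direct sum over tuples $(c_0,\dots,c_n)$ has a single summand. The bimodule $M=\bbZ P(\CC)$ is constant as a simplicial abelian group, so $M(P,P)_n = R$. Hence
\[HH(\bbZ P(\CC))_n = R^{\otimes (n+1)},\]
which is precisely the $n$th level of the cyclic bar complex $HH(R)$ of the $\bbZ$-algebra $R$.

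Then check that the structure maps agree.
\begin{itemize}
\item For $0\le i<n$, the face $d_i$ composes the $i$th and $(i+1)$st tensor factors, which is multiplication in $R$.
\item The last face uses the right and left actions of the bimodule $\bbZ P(\CC)$ on itself, which are again composition. So it is the cyclic wrap-around $a_0\otimes\cdots\otimes a_n \mapsto a_na_0\otimes a_1\otimes\cdots\otimes a_{n-1}$, up to the ordering convention.
\item Degeneracies insert the identity morphism, which is $1_R$.
\item The cyclic operators agree in the same way.
\end{itemize}
Hence the identity map on each level is an isomorphism of (cyclic) simplicial abelian groups $HH(\bbZ P(\CC)) \cong HH(R)$, and taking homotopy groups gives $HH_\ast(\bbZ P(\CC))\cong HH_\ast(R)$.

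The only point needing care is the order of composition. In the categorical Hochschild complex the factors are written $\CC(c_0,c_1)\otimes\CC(c_1,c_2)\otimes\cdots$, while the ring convention multiplies $a_ia_{i+1}$. If these orderings disagree, the identification is with $HH(R^{op})$ instead of $HH(R)$. I would handle this by fixing the multiplication on $R$ to match the composition convention used in the face maps, i.e.\ declaring the ring product to be composition in the order the faces use. Alternatively, one can invoke the standard isomorphism $HH(R)\cong HH(R^{op})$ induced by reversing the tensor factors, $a_0\otimes a_1\otimes\cdots\otimes a_n\mapsto a_0\otimes a_n\otimes\cdots\otimes a_1$. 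Either way the statement follows.
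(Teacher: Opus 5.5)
Your proposal is correct and follows the same route as the paper, which simply observes that with a single object the Hochschild complex of $\bbZ P(\CC)$ is by definition the cyclic bar complex of $\Hom_{\bbZ P(\CC)}(P,P)\cong\Hom_{\bbZ\mM(\CC)}(P,P)$. Your extra care about the composition order fills in a detail the paper leaves implicit. The fallback $HH_\ast(R)\cong HH_\ast(R^{op})$ also works: the reversal map intertwines $d_i$ with $d_{n-i}$ rather than being a strict simplicial isomorphism, but it still induces an isomorphism on homology.
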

\begin{proof}
The Hochschild complex of $\bbZ P(\CC)$ by definition (\cref{Notations}(\ref{Not:cyclic_bar_complex})) is 
   % https://q.uiver.app/#q=WzAsNCxbMCwwLCJcXEhvbV97XFxiYlogUChcXENDKX0oUCxQKSJdLFsxLDAsIlxcSG9tX3tcXGJiWiBQKFxcQ0MpfShQLFApXFxvdGltZXMgXFxIb21fe1xcYmJaIFAoXFxDQyl9KFAsUCkiXSxbMiwwLCJcXEhvbV97XFxiYlogUChcXENDKX0oUCxQKV57XFxvdGltZXMgM30iXSxbMywwLCJcXGNkb3RzIl0sWzAsMSwiIiwwLHsib2Zmc2V0IjotMiwic3R5bGUiOnsidGFpbCI6eyJuYW1lIjoiYXJyb3doZWFkIn0sImhlYWQiOnsibmFtZSI6Im5vbmUifX19XSxbMSwwLCIiLDEseyJzdHlsZSI6eyJ0YWlsIjp7Im5hbWUiOiJhcnJvd2hlYWQifSwiaGVhZCI6eyJuYW1lIjoibm9uZSJ9fX1dLFswLDEsIiIsMCx7Im9mZnNldCI6Miwic3R5bGUiOnsidGFpbCI6eyJuYW1lIjoiYXJyb3doZWFkIn0sImhlYWQiOnsibmFtZSI6Im5vbmUifX19XSxbMSwyLCIiLDAseyJzdHlsZSI6eyJ0YWlsIjp7Im5hbWUiOiJhcnJvd2hlYWQifSwiaGVhZCI6eyJuYW1lIjoibm9uZSJ9fX1dLFsyLDEsIiIsMCx7Im9mZnNldCI6Miwic3R5bGUiOnsidGFpbCI6eyJuYW1lIjoiYXJyb3doZWFkIn0sImhlYWQiOnsibmFtZSI6Im5vbmUifX19XSxbMSwyLCIiLDAseyJvZmZzZXQiOi00LCJzdHlsZSI6eyJ0YWlsIjp7Im5hbWUiOiJhcnJvd2hlYWQifSwiaGVhZCI6eyJuYW1lIjoibm9uZSJ9fX1dLFsxLDIsIiIsMCx7Im9mZnNldCI6Mn1dLFsyLDEsIiIsMCx7Im9mZnNldCI6LTR9XSxbMiwzXSxbMywyLCIiLDAseyJvZmZzZXQiOi0xfV0sWzIsMywiIiwwLHsib2Zmc2V0IjotMn1dLFszLDIsIiIsMCx7Im9mZnNldCI6MX1dLFsyLDMsIiIsMCx7Im9mZnNldCI6Mn1dLFszLDIsIiIsMCx7Im9mZnNldCI6LTN9XSxbMywyLCIiLDAseyJvZmZzZXQiOjN9XV0=
\[\begin{tikzcd}
	{\Hom_{\bbZ P(\CC)}(P,P)} & {\Hom_{\bbZ P(\CC)}(P,P)\otimes \Hom_{\bbZ P(\CC)}(P,P)} & {\Hom_{\bbZ P(\CC)}(P,P)^{\otimes 3}} & \cdots
	\arrow[shift left=2, tail reversed, no head, from=1-1, to=1-2]
	\arrow[shift right=2, tail reversed, no head, from=1-1, to=1-2]
	\arrow[tail reversed, no head, from=1-2, to=1-1]
	\arrow[tail reversed, no head, from=1-2, to=1-3]
	\arrow[shift left=4, tail reversed, no head, from=1-2, to=1-3]
	\arrow[shift right=2, from=1-2, to=1-3]
	\arrow[shift right=2, tail reversed, no head, from=1-3, to=1-2]
	\arrow[shift left=4, from=1-3, to=1-2]
	\arrow[from=1-3, to=1-4]
	\arrow[shift left=2, from=1-3, to=1-4]
	\arrow[shift right=2, from=1-3, to=1-4]
	\arrow[shift left, from=1-4, to=1-3]
	\arrow[shift right, from=1-4, to=1-3]
	\arrow[shift left=3, from=1-4, to=1-3]
	\arrow[shift right=3, from=1-4, to=1-3]
\end{tikzcd}\]
but this is also the Hochschild complex of the ring $\Hom_{\bbZ P(\CC)}(P,P) \cong \Hom_{\bbZ\mM(\CC)}(P,P)$.
\end{proof}
\begin{example}
Consider the weak assembler $\textbf{FinSet}$. Consider $P$ to be the finite set $\{1\}$. From \cref{ex: Morita object in FinSet}, \cref{prop: Morita equiv between ZM and ZP} and \cref{corr: HH of ZM and ZP}, we have $HH_{\ast}\bbZ\mM(\textbf{FinSet}) \cong HH_{\ast}\bbZ\{1\}$. From \cref{ex: ZM for FinSets} and \cref{lemma: Hochschild homology of weak assembler}, we have $HH_{\ast}\bbZ\{1\} \cong HH_{\ast}(\Hom_{\bbZ\mM(\textbf{FinSet})}(\{1\},\{1\}))$ with $\Hom_{\bbZ\mM(\textbf{FinSet})}(\{1\},\{1\})$ generated by the identity map $$\{1\}\xleftarrow{id}\{1\}\xrightarrow{id}\{1\}$$
as free abelian group. In fact this is the identity element in the ring of endomorphisms of $\{1\}$ and as a ring $\Hom_{\bbZ\mM(\textbf{FinSet})}(\{1\},\{1\}) \cong \bbZ$ giving us
$$HH_{\ast}\bbZ\mM(\textbf{FinSet})\cong \bbZ.$$
\end{example}
\begin{example}\label{ex: HH of P_n}
Consider the category $\PP_{\bbZ}^{n}$ (with only the translation restricted isometry group). Consider $P$ to be $[0,1/n]$ which is a Morita object as discussed in \cref{ex: Morita object in Restricted Polytopes}(2). From \cref{prop: Morita equiv between ZM and ZP}, \cref{corr: HH of ZM and ZP} and \cref{lemma: Hochschild homology of weak assembler}, we have $HH_{\ast}\bbZ\mM(\PP_{\bbZ}^{n}) \cong HH_{\ast}\bbZ [0,1/n] \cong HH_{\ast}(\Hom_{\bbZ\mM(\PP_{\bbZ}^{n})}([0,1/n],[0,1/n]))$. Using \cref{ex: ZM for Restricted Polytopes}(2), we get $\Hom_{\bbZ\mM(\CC)}([0,1/n],[0,1/n])$ is generated by the morphism $$[0,1/n] \xhookleftarrow{id} [0,1/n] \xhookrightarrow{id} [0,1/n]$$ and is isomorphic, not just as a free abelian group but as a ring, to $\bbZ$. Thus, we have
    $$HH_{\ast}\bbZ\mM(\CC)\cong \bbZ.$$
    On the other hand, if we consider the same Morita object in the category $\PP_{\bbZ\rtimes C_2}^{n}$, the calculations are different. In this case, we have $\Hom_{\bbZ\mM(\PP_{\bbZ\rtimes C_2}^{n})}([0,1/n],[0,1/n])$ generated as a free abelian group by morphisms $$e_1:= [0,1/n] \xhookleftarrow{id} [0,1/n] \xhookrightarrow{id} [0,1/n] \text{ and } e_2:= [0,1/n] \xhookleftarrow{id} [0,1/n] \xhookrightarrow{T_{1/n}\circ r} [0,1/n]$$ from \cref{ex: ZM for Restricted Polytopes}(2). To understand the ring structure, note that
    \begin{align*}
        e_1^2 &= e_1\\
        e_1e_2 &= e_2\\
        e_2e_1 &= e_2\\
        e_2^2 &= e_1
    \end{align*}
 which extends to multiplication of any two terms of the form $\alpha e_1 + \beta e_2$. Thus $e_1$ is the identity element of the ring and $e_2$ is an involution. As rings we have the following isomorphism $$\Hom_{\bbZ\mM(\PP_{\bbZ\rtimes C_2}^{n})}([0,1/n],[0,1/n]) \cong \bbZ [C_2] \cong \bbZ [x]/(1-x^2)$$
 where $\bbZ[C_2]$ is the group ring associated the cyclic group of order $2$. Thus, using \cref{lemma: HH of group rings} below, we have
$$HH_{\ast}\bbZ\mM(\PP^n_{\bbZ\rtimes C_2})\cong HH_{\ast}(\bbZ[C_2]) \cong H_{\ast}(C_2)\oplus H_{\ast}(C_2) \cong \begin{cases}
    \bbZ\oplus\bbZ & \text{if } \ast = 0 \\
    \bbZ/2\bbZ\oplus\bbZ/2\bbZ  & \text{if } \ast = 2n+1, n\in \bbZ_{\geq 0}\\
    0 & \text{if } \ast = 2n, n\in \bbZ_{> 0}
\end{cases}.$$
\end{example}
\begin{example}\label{ex: HH of P_mm}
Similarly, consider the category $\PP^{m,m}_{\bbZ^2}$ with the Morita object $I_m\times I_m$ (\cref{ex: Morita object in Restricted Polytopes}(3)). As in the last example, and using \cref{ex: ZM for Restricted Polytopes}(3) we have, $\Hom_{\bbZ\mM(\PP^{m,m}_{\bbZ^2})}(I_m\times I_m,I_m\times I_m) \cong \bbZ$ as a ring, giving us
$$HH_{\ast}(\bbZ\mM(\PP^{m,m}_{\bbZ^2})) \cong \bbZ.$$

On the other hand, for the category $\PP^{m,m}_{\bbZ^2\rtimes D_4}$ with the same Morita object, we have 
$$\Hom_{\bbZ\mM(\PP^{m,m}_{\bbZ^2\rtimes D_4})}(I_m\times I_m,I_m\times I_m)$$
is generated as a free abelian group by 
\begin{align*}
	& e_1 := I_m\times I_m \xleftarrow{id} I_m\times I_m \xrightarrow{id} I_m\times I_m, \  & e_2 := I_m\times I_m \xleftarrow{\rho} I_m\times I_m \xrightarrow{id} I_m\times I_m,\\
	& e_3 := I_m\times I_m \xleftarrow{\rho^2} I_m\times I_m \xrightarrow{id} I_m\times I_m, \  & e_4 := I_m\times I_m \xleftarrow{\rho^3} I_m\times I_m \xrightarrow{id} I_m\times I_m,\\
	& e_5 := I_m\times I_m \xleftarrow{T_{1,0}\circ r} I_m\times I_m \xrightarrow{id} I_m\times I_m, \  & e_6 := I_m\times I_m \xleftarrow{\rho\circ T_{1,0}\circ r} I_m\times I_m \xrightarrow{id} I_m\times I_m,\\
	& e_7 := I_m\times I_m \xleftarrow{\rho^2\circ T_{1,0}\circ r} I_m\times I_m \xrightarrow{id} I_m\times I_m, \  & e_8 := I_m\times I_m \xleftarrow{\rho^3\circ T_{1,0}\circ r} I_m\times I_m \xrightarrow{id} I_m\times I_m,
\end{align*}
using \cref{ex: ZM for Restricted Polytopes}(3) and similar equivalences as in the last example. Here, $\rho$ denotes rotation by $\pi/2$, $r$ denotes the reflection along $y$-axis and $T_{1,0}$ denotes the translation by $(1/m,0)$. We claim that we have an isomorphism of rings
$$f: \Hom_{\PP^{m,m}_{\bbZ \rtimes D_4}}(I_m\times I_m, I_m\times I_m) \overset{\cong}{\longrightarrow} \bbZ[D_4]$$
via the map that sends $e_i$ to the corresponding element of $D_4$ that marks the first map in its zigzag. Note that this map has a natural inverse. The group homomorphism is clear for both $f$ and $f^{-1}$, and to show the claim it is enough to show that the relations that the generators of $D_4$ satisfy are satisfied by the corresponding $e_i$s. One checks 
\begin{align*}
	e_2^4 &= e_1\\
	e_5^2 &= e_1\\
	e_5e_2e_5 &= e_2^3
\end{align*}
proving the claim. Then to use the \cref{lemma: HH of group rings} below, we note that $D_4$ has five conjugacy classes with centralizers $D_4, D_4, C_4, C_2\times C_2, C_2\times C_2$.
Using their group homologies we get
$$HH_{\ast}\bbZ\mM(\PP^{m,m}_{\bbZ^2\rtimes D_4})\cong HH_{\ast}(\bbZ[D_4]) \cong 2H_{\ast}(D_4)\oplus 2H_{\ast}(C_2\times C_2)\oplus H_{\ast}(C_4)$$
where
\[H_q(D_4;\bbZ) \cong \begin{cases}
    \bbZ & \text{if } q = 0 \\
    (\bbZ/2\bbZ)^{(q+3)/2}  & \text{if } q \equiv 1 (\text{mod } 4)\\
    (\bbZ/2\bbZ)^{(q+1)/2} \oplus \bbZ/4\bbZ  & \text{if } q \equiv 3 (\text{mod } 4)\\
    (\bbZ/2\bbZ)^{q/2} & \text{if } q\in 2\bbZ_{> 0}
\end{cases},\]
\[H_q(C_4;\bbZ) \cong \begin{cases}
\bbZ & \text{if } q = 0 \\
   0 & \text{if } q \text{ even} \\
    \bbZ/4\bbZ  & \text{if } q \text{ odd} 
\end{cases},
\]
\[H_q(C_2\times C_2;\bbZ) \cong \begin{cases}
\bbZ & \text{if } q = 0 \\
   (\bbZ/2\bbZ)^{q/2} & \text{if } q \text{ even} \\
    (\bbZ/2\bbZ)^{(q+3)/2}  & \text{if } q \text{ odd} 
\end{cases}.
\]
\end{example}
\begin{remark}
    This can be generalized with no extra techniques to `square polytopes' in dimension $n$, $\PP_{\bbZ^n\rtimes G_n}^{m,m,\cdots ,m}$, where $G_n$ is the group of isometries of $I_m^{\times n}$. One can compute the Hochschild homology completely explicitly as long as the group homology of the centralizer groups of each conjugacy class of $G_n$ is calculable.
\end{remark}
\begin{example}\label{ex: HH of P_01}
  Consider the category $\PP^{[0,1]}_1$ with Morita object $P = [0,1]$. Recall from \cref{ex: ZM for Restricted Polytopes}(1), $\Hom_{\PP^{[0,1]}_1}([0,1],[0,1])$ is generated as a free abelian group by the set of finite disjoint union of closed intervals $R\subset [0,1]$ which we denote as $M$. An element $R$ of $M$ corresponds to the span
   $$[0,1] \xhookleftarrow{\supset} R\xhookrightarrow{\subset} [0,1].$$ The composite (and hence the monoidal product) of two elements $R, R'$ in $M$ is given by their intersection. Hence, $M$ is a commutative monoid with $[0,1]$ being the identity element. This gives us $$\Hom_{\PP^{[0,1]}_1}([0,1],[0,1]) \cong \bbZ[M]$$ as commutative rings which immediately tells us $HH_0(\bbZ \PP^{[0,1]}_1)\cong \bbZ[M]$. Next we show that all the higher Hochschild homology of $\bbZ[M]$ are trivial: 
  
  Let $$S(F_1, F_2 \cdots , F_n)=\left\{ \bigcap_{i\in I}F_i| I \subset \{1,2, \cdots, n\}\right\}$$ be the finite (at most $2^n$ elements) submonoid generated by the closed sub intervals $F_i \subset [0,1]$. Then $\bbZ[M] = \operatorname{colim}_{S\subset M} \bbZ[S]$. $S$ is in fact a poset with partial order given by subset relation. By \cite[Theorem 1, page 605]{Solomon1967}, $\bbZ[S]$ is isomorphic as a ring to direct sum of copies of $\bbZ$. As $HH_n(\bbZ) \cong 0$ for $n>0$, $HH_n(\bbZ[S]) = 0$. Since $\bbZ[M]$ is colimit of such rings, $HH_n(\bbZ[M])=0$ for $n>0$. Thus for $n>0$,
  $$HH_n(\bbZ\PP^{[0,1]}_1)\cong HH_n(\bbZ[M]) \cong 0.$$
  
The category $\PP^{[0,1]}_{C_2}$ also has $P=[0,1]$ as the Morita object. In $\Hom_{\PP^{[0,1]}_{C_2}}([0,1],[0,1])$ we can check by taking composition of morphisms that the ring multiplication is
        $$(D_1,\phi_1) \odot (D_2, \phi_2) = (D_1 \cap \phi_1 (D_2), \phi_1 \phi_2 )$$
        showing that $\Hom_{\bbZ\mM(\PP^{[0,1]}_{C_2})}([0,1],[0,1]) \cong \bbZ[M]_e \oplus \bbZ[M]_u $ graded by the group $C_2$ (subscripts denote grading) as rings. Lets denote this ring by $\RR$. This is strongly graded by $C_2$ so we shall use \cref{lemma: HH of graded rings} to compute the Hochschild homology. We have a decomposition
    $$HH_* (\RR) \cong HH_*(\RR)_{\langle e \rangle} \oplus HH_*(\RR)_{\langle u \rangle}$$
    and spectral sequences,
    $$ H_p(C_2, HH_q(\bbZ[M]_e, \bbZ[M]_z)) \implies HH_{p+q} (\RR)_{\langle z \rangle} $$
    for each $z\in C_2$. Using essentially the same argument as above (\cite[Theorem 1]{Solomon1967} and $HH_*$ commutes with filtered colimits), $$HH_q(\bbZ[M], \bbZ[M]_z) = 0 $$
    for $q>0$ and $z\in C_2$. As $\bbZ[M]$ is a commutative ring,
    $$HH_0(\bbZ[M]_e, \bbZ[M]_e) \cong \bbZ[M] \text{ and }HH_0(\bbZ[M]_e, \bbZ[M]_u) = \bbZ[M]_u/ [\bbZ[M]_e, \bbZ[M]_u]. $$
    We first note that $[\bbZ[M]_e, \bbZ[M]_u]$ is freely generated by elements of the form $(D_1,e)\odot  (D_2, u) - (D_2, u) \odot (D_1,e) = (D_1\cap D_2, u) - (D_2 \cap u(D_1), u)$, hence as a module $$HH_0(\bbZ[M]_e, \bbZ[M]_u) = \bbZ[M]/ \langle X\cap Y  -X \cap u(Y) \rangle_{X,Y \in M}. $$
    It is easy to see that 
    $$X \cap u(X) \sim X\cap u^2(X) \sim  X\text{ and } X\cap u(X)= Y\cap u(Y) \iff X\sim Y.$$
    Hence the map $\bbZ[M]/ \langle X\cap Y  -X \cap u(Y) \rangle \xrightarrow[]{} \bbZ[M^{C_2}]$ given by $[X]\mapsto [X\cap u(X)]$ is an isomorphism. The spectral sequence is concentrated in row $q=0$ and therefore collapses to 
    $$H_n(\RR)_{\langle e \rangle } \cong H_n(C_2; \bbZ[M]) \text{ and }  H_n (\RR)_{\langle u \rangle} \cong H_n(C_2; \bbZ[M^{C_2}]). $$
    The action of $C_2$ on $\bbZ[M]$ by reflecting the generators by about $1/2$, and trivial action on $\bbZ[M^{C_2}]$.  From \cite[Theorem 6.2.2]{weibel1994introduction}, the group homology of the cyclic group is given by
    $$H_n(C_2; \bbZ[M])\cong 
    \begin{cases}
        \frac{\bbZ[M]}{(u-1)\bbZ[M]} & n=0\\
        \frac{\bbZ[M]^{C_2}}{(u+1)\bbZ[M]} & n= \text{odd}\\
        \frac{\{X\in \bbZ[M]| (u+1)X=0\}}{(u-1)\bbZ[M]} & n= \text{even}
    \end{cases}
    =
    \begin{cases}
        \bbZ[M/{C_2}] & n=0\\
        (\bbZ/2)[M^{C_2}] & n= \text{odd}\\
        0 & n= \text{even}
    \end{cases}
    $$
    $$H_n(C_2; \bbZ[M^{C_2}]) \cong \begin{cases}
        \frac{\bbZ[M^{C_2}]}{(u-1)\bbZ[M^{C_2}]} & n=0\\
        \frac{\bbZ[M^{C_2}]^{C_2}}{(u+1)\bbZ[M^{C_2}]} & n= \text{odd}\\
        \frac{\{X\in \bbZ[M^{C_2}]| (u+1)X=0\}}{(u-1)\bbZ[M^{C_2}]} & n= \text{even}
    \end{cases}
    =
    \begin{cases}
        \bbZ[M^{C_2}] & n=0\\
        (\bbZ/2)[M^{C_2}] & n= \text{odd}\\
        0 & n= \text{even}
    \end{cases}
    $$
    Putting this together gives 
    $$HH_n(\RR) \cong 
    \begin{cases}
        \bbZ[M/{C_2}]\oplus \bbZ[M^{C_2}] & n=0\\
        (\bbZ/2)[M^{C_2}]\oplus (\bbZ/2)[M^{C_2}] & n= \text{odd}\\
        0 & n = \text{even}
    \end{cases}.$$
\end{example}    
\begin{example}
 Consider the weak assembler $\PP^{\mathcal{E}^1}_{G}$ for $G:= \bbR$ or $G:= \bbR\rtimes C_2$. $P:= [0,1]$ is a Morita object from \cref{ex: Morita object in Polytopes}. From \cref{prop: Morita equiv between ZM and ZP} and \cref{corr: HH of ZM and ZP}, we have $HH_{\ast}\bbZ\mM(\PP_G^{\mathcal{E}^1}) \cong HH_{\ast}\bbZ[0,1]$. From \cref{lemma: Hochschild homology of weak assembler}, $HH_{\ast}\bbZ[0,1] \cong HH_{\ast}(\Hom_{\bbZ\mM(\PP_G^{\mathcal{E}^1})}([0,1],[0,1]))$. This is a really complicated ring and its hard to say what its Hochschild homology is. We know that similar to previous examples, we can reduce the free abelian group generators to morphisms of the type $[0,1] \xhookleftarrow{inc}R \xhookrightarrow{}[0,1]$ where the first map is just the set inclusion. The ring multiplication is composition and we know its a non-commutative ring. For example the following two morphisms
\[
f:= % https://q.uiver.app/#q=WzAsMyxbMSwwLCJbMCwxLzJdIl0sWzAsMSwiWzAsMV0iXSxbMiwxLCJbMCwxXSJdLFswLDEsImlkIiwyLHsic3R5bGUiOnsidGFpbCI6eyJuYW1lIjoiaG9vayIsInNpZGUiOiJib3R0b20ifX19XSxbMCwyLCJUX3sxLzJ9IiwwLHsic3R5bGUiOnsidGFpbCI6eyJuYW1lIjoiaG9vayIsInNpZGUiOiJ0b3AifX19XV0=
\begin{tikzcd}[cramped, column sep = small, row sep = small]
	& {[0,1/2]} & \\
	{[0,1]} && {[0,1]}
	\arrow["id"', hook', from=1-2, to=2-1]
	\arrow["{T_{1/2}}", hook, from=1-2, to=2-3]
\end{tikzcd},
\ g:=
% https://q.uiver.app/#q=WzAsMyxbMSwwLCJbMCwxLzJdIl0sWzAsMSwiWzAsMV0iXSxbMiwxLCJbMCwxXSJdLFswLDEsImlkIiwyLHsic3R5bGUiOnsidGFpbCI6eyJuYW1lIjoiaG9vayIsInNpZGUiOiJib3R0b20ifX19XSxbMCwyLCJUX3sxLzN9IiwwLHsic3R5bGUiOnsidGFpbCI6eyJuYW1lIjoiaG9vayIsInNpZGUiOiJ0b3AifX19XV0=
\begin{tikzcd}[cramped, column sep = small, row sep = small]
	& {[0,1/2]} & \\
	{[0,1]} && {[0,1]}
	\arrow["id"', hook', from=1-2, to=2-1]
	\arrow["{T_{1/3}}", hook, from=1-2, to=2-3]
\end{tikzcd}
\]
give
\[
fg = % https://q.uiver.app/#q=WzAsMyxbMSwwLCJcXGVtcHR5c2V0Il0sWzAsMSwiWzAsMV0iXSxbMiwxLCJbMCwxXSJdLFswLDEsIiIsMix7InN0eWxlIjp7InRhaWwiOnsibmFtZSI6Imhvb2siLCJzaWRlIjoiYm90dG9tIn19fV0sWzAsMiwiIiwwLHsic3R5bGUiOnsidGFpbCI6eyJuYW1lIjoiaG9vayIsInNpZGUiOiJ0b3AifX19XV0=
\begin{tikzcd}[cramped, column sep = small, row sep = small]
	& \emptyset & \\
	{[0,1]} && {[0,1]}
	\arrow[hook', from=1-2, to=2-1]
	\arrow[hook, from=1-2, to=2-3]
\end{tikzcd},
\text{ }
gf=% https://q.uiver.app/#q=WzAsMyxbMSwwLCJbMS8zLCAxLzJdIl0sWzAsMSwiWzAsMV0iXSxbMiwxLCJbMCwxXSJdLFswLDEsIlRfey0xLzN9IiwyLHsic3R5bGUiOnsidGFpbCI6eyJuYW1lIjoiaG9vayIsInNpZGUiOiJib3R0b20ifX19XSxbMCwyLCJUX3sxLzJ9IiwwLHsic3R5bGUiOnsidGFpbCI6eyJuYW1lIjoiaG9vayIsInNpZGUiOiJ0b3AifX19XV0=
\begin{tikzcd}[cramped, column sep = small, row sep = small]
	& {[1/3, 1/2]} & \\
	{[0,1]} && {[0,1]}
	\arrow["{T_{-1/3}}"', hook', from=1-2, to=2-1]
	\arrow["{T_{1/2}}", hook, from=1-2, to=2-3]
\end{tikzcd}
\]
and therefore we do have a non trivial commutator for calculating even $HH_0$. Lot of classes like $gf$ above will get killed when we mod out by the commutator for $HH_0$ computation. However, so far we are unable to say more about the structure of this ring and its Hochschild homology.
\end{example}
\begin{lemma}\label{lemma: HH of group rings}\cite[Theorem 7.4.6]{Loday}
The Hochschild homology of the group ring $\bbZ[G]$ is given by
$$ HH_{\ast} (\bbZ[G]) \cong \bigoplus_{\langle z \rangle \in \operatorname{Conj}(G)} H_{\ast}(G_z; \bbZ)$$
where $\operatorname{Conj(G)}$ is the set of conjugacy classes of the group $G$, and $G_z$ is the centralizer of $z\in G$.
\end{lemma}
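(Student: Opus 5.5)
The plan is to prove the isomorphism $HH_*(\bbZ[G]) \cong \bigoplus_{\langle z\rangle} H_*(G_z;\bbZ)$ by decomposing the cyclic bar complex of $\bbZ[G]$ by conjugacy class, and then identifying each summand with the bar construction computing the homology of a centralizer. In level $n$ the Hochschild complex is $\bbZ[G]^{\otimes(n+1)} \cong \bbZ[G^{n+1}]$, which is the free abelian group on the cyclic nerve $B^{cyc}_n(G)$ from \cref{Notations}(\ref{Not:cyclic_nerve}). The face maps multiply adjacent entries, and the last face map moves $g_n$ to the front. Hence $HH(\bbZ[G]) \cong \bbZ[B^{cyc}G]$ as simplicial abelian groups.

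The first step is to note that the product $g_0 g_1 \cdots g_n$ is preserved up to conjugacy by every face and degeneracy map. All faces except the last preserve it exactly. The last face replaces it by the conjugate $g_n (g_0\cdots g_{n-1})$. This gives a splitting of simplicial sets
\[
B^{cyc}G = \coprod_{\langle z\rangle \in \operatorname{Conj}(G)} B^{cyc}_{\langle z\rangle}G,
\]
and therefore a splitting of $HH_*(\bbZ[G])$ into summands indexed by conjugacy classes. Since homology of a free simplicial abelian group on a simplicial set is the integral homology of that simplicial set, it remains to show that each $B^{cyc}_{\langle z\rangle}G$ is weakly equivalent to $BG_z$.

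The second step identifies $B^{cyc}_{\langle z\rangle}G$ with a homotopy orbit space. Consider the simplicial set $E_n = G^{n+1}$ of tuples $(h_0,\dots,h_n)$ viewed as the nerve of the translation groupoid of $G$ acting on itself by conjugation. Concretely, I would use the map sending a tuple $(g_0,\dots,g_n)$ to the string of composable arrows
\[
x_0=g_0g_1\cdots g_n \xrightarrow{g_1} \cdots,
\]
where each object is the conjugate of $z$ obtained by cyclically rotating the product. This gives an isomorphism of $B^{cyc}G$ with the nerve of the action groupoid $G\ltimes G^{ad}$; this is Loday's standard identification. The component for $\langle z\rangle$ is then the nerve of the connected groupoid whose objects are the conjugates of $z$, and the automorphism group of $z$ in it is $G_z$. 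A connected groupoid is equivalent to the automorphism group of any one object, so its nerve is weakly equivalent to $BG_z$. Therefore
\[
H_*(B^{cyc}_{\langle z\rangle}G;\bbZ) \cong H_*(G_z;\bbZ).
\]
Summing over conjugacy classes gives the claim.

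The main obstacle is checking carefully that the cyclic nerve, with its particular last face map, matches the nerve of the conjugation action groupoid simplicially, rather than merely levelwise. If the explicit matching of faces becomes messy, I would instead argue homologically. The summand for $\langle z\rangle$ is $\bbZ[G]\otimes_{\bbZ[G]}$-coinvariants of a bar resolution with coefficients in $\bbZ[\langle z\rangle]$, where $G$ acts by conjugation. So it computes $H_*(G;\bbZ[G/G_z])$, and Shapiro's lemma converts this to $H_*(G_z;\bbZ)$. For this route I would use that $HH_*(\bbZ[G]) \cong H_*(G;\bbZ[G]^{ad})$, via $\bbZ[G]^e \cong \bbZ[G\times G]$ and induction from the diagonal subgroup.
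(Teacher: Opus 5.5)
Your argument is correct. The paper gives no proof of its own and simply cites Loday; what you describe is the standard argument behind that citation: identify the Hochschild complex with $\bbZ[B^{cyc}G]$, split it by the conjugacy class of $g_0g_1\cdots g_n$, and identify each piece with the nerve of a connected groupoid whose vertex group is $G_z$. Your fallback via $HH_*(\bbZ[G])\cong H_*(G;\bbZ[G]^{ad})$ and Shapiro's lemma works equally well.

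There is one indexing slip to fix in the explicit isomorphism. Take arrows $g\colon x\to g^{-1}xg$ labelled $g_1,\dots,g_n$. Then the $j$th object must be $g_{j+1}\cdots g_n g_0\cdots g_j$, so the string starts at $g_1\cdots g_ng_0$ and \emph{ends} at $g_0g_1\cdots g_n$. With this choice all face and degeneracy maps match exactly.
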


\begin{lemma}\label{lemma: HH of graded rings}\cite{Lorenz1992}
    Let $S$ be an algebra strongly graded by group $G$ i.e. $S = \bigoplus_{g\in G} S_g$ such that $S_gS_h = S_{gh}$ for all $g, h \in G$. Then there is a canonical decomposition with components labeled by each conjugacy class of $G$, 
    $$HH_*(S) \cong \bigoplus_{\langle z \rangle \in \operatorname{Conj}(G)} HH_*(S)_{\langle z \rangle} $$
    and each component has a description in terms of the following spectral sequence,
    $$E^2_{p,q} = H_p(G_z, HH_q(R, S_z)) \implies HH_{p+q} (S)_{\langle z \rangle}$$
    where $HH_q(R, S_z)$ is the Hochschild homology of the identity component $R:= S_e$ of $S$ with coefficients in the bimodule $S_z$, and $H_p(G_z, \cdot )$ is the group homology of the centralizer $C_z$ of $z\in G$.
\end{lemma}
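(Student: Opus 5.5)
The plan is to identify $HH_*(S)$ with $\operatorname{Tor}^{S^e}_*(S,S)$ and analyze it through the $G$-grading in two stages. First comes a combinatorial splitting of the Hochschild complex. Then comes a change-of-rings argument producing a Lyndon--Hochschild--Serre type spectral sequence. Shapiro's lemma finally localizes each summand to the centralizer $G_z$.

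\emph{Step 1: the decomposition.} Any element of $HH(S)_n=S^{\otimes n+1}$ is a sum of homogeneous tensors $s_0\otimes\cdots\otimes s_n$ with $s_i\in S_{g_i}$. To such a tensor I assign the conjugacy class of the product $g_0g_1\cdots g_n$. The face maps $d_i$ for $i<n$ multiply adjacent factors, so they preserve this product exactly. The last face map sends the tensor to $s_ns_0\otimes s_1\otimes\cdots$, whose degree product is $g_ng_0\cdots g_{n-1}$. This is conjugate to $g_0\cdots g_n$. Degeneracies insert $1\in S_e$ and so also preserve the product. Hence the Hochschild complex splits as a direct sum of subcomplexes $HH(S)_{\langle z\rangle}$, one for each $\langle z\rangle\in\operatorname{Conj}(G)$, and this gives the decomposition on homology. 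This step does not use strong grading.

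\emph{Step 2: change of rings.} Consider the subring $\Delta=\bigoplus_{g\in G}S_g\otimes S_{g^{-1}}^{op}\subset S^e$. It is $G$-graded with identity component $R^e$, and it is strongly graded because $S$ is. The key claims are the following.
\begin{itemize}
\item[(a)] $S^e$ is flat, indeed projective, over $\Delta$. To prove this, decompose $S^e=\bigoplus_{h}\big(\bigoplus_g S_{g}\otimes S^{op}_{g^{-1}h}\big)$ as $\Delta$-modules, and use strong grading to see that each summand is an invertible $\Delta$-module, hence finitely generated projective.
\item[(b)] $S\cong S^e\otimes_\Delta R$ as $S^e$-modules, where $R$ carries the $\Delta$-action $(a\otimes b)\cdot r=arb$.
\end{itemize}
Given (a) and (b), for any $S$-bimodule $M$ we get
\[\operatorname{Tor}^{S^e}_*(S,M)\cong\operatorname{Tor}^\Delta_*(R,M).\]
Next I would apply the standard spectral sequence for a strongly graded ring over its identity component, namely the analogue of Lyndon--Hochschild--Serre for the crossed-product-like ring $\Delta$ over $R^e$. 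It reads
\[H_p\big(G,\operatorname{Tor}^{R^e}_q(R,M)\big)\Rightarrow\operatorname{Tor}^\Delta_{p+q}(R,M).\]
Here $G$ acts on $HH_q(R,M)$ via the units $\sum x_i\otimes y_i\in S_g\otimes S_{g^{-1}}$ with $\sum x_iy_i=1$, which exist by strong grading.

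\emph{Step 3: localizing to $G_z$.} Take $M=S$ and restrict to the $\langle z\rangle$-summand, so $M=\bigoplus_{g\in\langle z\rangle}S_g$; I will check that this is compatible with Step 1. The $G$-action permutes the summands $HH_q(R,S_g)$ by conjugation, $g\mapsto hgh^{-1}$. Therefore the $G$-module $\bigoplus_{g\in\langle z\rangle}HH_q(R,S_g)$ is induced from the $G_z$-module $HH_q(R,S_z)$. Shapiro's lemma then gives
\[E^2_{p,q}=H_p\big(G_z,HH_q(R,S_z)\big).\]

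\emph{Main obstacle.} I expect the main obstacle to be Step 2: establishing the projectivity and induction isomorphism (a)--(b), and, more importantly, constructing the spectral sequence so that the $G$-action is well defined on $HH_q(R,M)$ and independent of the choice of the decompositions of $1$. My first attempt would be to build a double complex: the bar resolution of $R$ over $R^e$, tensored up along $\Delta$ and combined with a $G$-bar construction. I would then read off the $E^2$ page from the row filtration, verifying that strong grading makes $\Delta\otimes_{R^e}-$ behave like a group-ring extension.
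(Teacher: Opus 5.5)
The paper gives no proof of this lemma; it only cites Lorenz, so there is no internal argument to compare yours against. Your outline is a sound and standard way to prove it. Its ingredients are the conjugacy-class splitting of the Hochschild complex, a change of rings to the diagonal subring $\Delta=\bigoplus_g S_g\otimes S_{g^{-1}}^{\mathrm{op}}$, a Lyndon--Hochschild--Serre type spectral sequence for the strongly graded ring $\Delta$ over $\Delta_e=R^e$, and Shapiro's lemma. The real payoff of passing to $\Delta$ is that $S_{\langle z\rangle}=\bigoplus_{g\in\langle z\rangle}S_g$ is a $\Delta$-submodule of $S$, even though it is not a sub-bimodule. That is what lets the spectral sequence split along conjugacy classes. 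A direct Grothendieck spectral sequence for $M\mapsto HH_0(R,M)$ on $S$-bimodules would give the $G$-spectral sequence, but not this splitting as easily.

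Several points need repair or completion.

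\textbf{Step (a).} The summands $M_h=\bigoplus_g S_g\otimes S^{\mathrm{op}}_{g^{-1}h}$ are only right $\Delta$-submodules, since left multiplication by $\Delta_k$ carries $M_h$ into $M_{khk^{-1}}$. So calling them invertible $\Delta$-modules does not make sense. Instead, grade $M_h$ by $(M_h)_g=S_g\otimes S^{\mathrm{op}}_{g^{-1}h}$ and apply Dade's theorem to the strongly graded ring $\Delta$. This gives $M_h\cong(R\otimes S_h^{\mathrm{op}})\otimes_{R^e}\Delta$, which is finitely generated projective because $S_h$ is finitely generated projective over $R$. It also gives (b) at once, since $(R\otimes S_h^{\mathrm{op}})\otimes_{R^e}R\cong S_h$.

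\textbf{The $G$-action.} The elements $\sum x_i\otimes y_i$ are not units in general. What makes the action well defined is that multiplication $\Delta_{g^{-1}}\otimes_{R^e}\Delta_g\to R^e$ is an isomorphism, so the preimage of $1$ is canonical and commutes with $R^e$. The obstacle you anticipate disappears if you obtain the spectral sequence as a Grothendieck spectral sequence, using three facts:
\begin{itemize}
\item $M\otimes_\Delta L\cong(M\otimes_{R^e}L)_G$;
\item $\Delta$ is $R^e$-projective, so the derived functors of $M\otimes_{R^e}-$ on $\Delta$-modules are $\operatorname{Tor}^{R^e}_*(M,-)$;
\item $M\otimes_{R^e}\Delta=\bigoplus_g M\otimes_{R^e}\Delta_g$ is induced from the trivial subgroup, hence $G$-acyclic.
\end{itemize}

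\textbf{Compatibility with Step 1.} This is a genuine step, not a routine check. Compute $\operatorname{Tor}^\Delta_*(S,R)$ using the total-degree-$e$ part $\operatorname{Bar}(S)_e$ of the bar resolution. The bar differential preserves the product of degrees and $\Delta_k$ conjugates it by $k$, so $\operatorname{Bar}(S)_e$ is a $\Delta$-flat resolution of $R$. Moreover $S^e\otimes_\Delta\operatorname{Bar}(S)_e\cong\operatorname{Bar}(S)$. Hence $S\otimes_\Delta\operatorname{Bar}(S)_e$ is the Hochschild complex, and $S_{\langle z\rangle}\otimes_\Delta\operatorname{Bar}(S)_e$ is exactly its $\langle z\rangle$-summand.

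\textbf{Flatness hypothesis.} The identifications $HH_*(S)\cong\operatorname{Tor}^{S^e}_*(S,S)$ and $HH_q(R,S_z)\cong\operatorname{Tor}^{R^e}_q(S_z,R)$ require $S$ to be flat over the ground ring $\bbZ$. The lemma as stated omits this hypothesis. It does hold in the paper's application, where $\bbZ[M]$ is free.
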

%%%%%%%%%%%%%%%%%%%%%%%%%%%%%%%%%%%%%%%%%%%%%%%%%%%%%%%
%%%%%%%%%%%NEW SECTION%%%%%%%%%%%%%%%%%%%%%%%%%%%%%%%%%
%%%%%%%%%%%%%%%%%%%%%%%%%%%%%%%%%%%%%%%%%%%%%%%%%%%%%%%
\section{The Dennis Trace Map for Weak Assemblers}\label{Subsec:Dennis_Trace_for_Assemblers}

In this section, we construct the Dennis trace map (\cref{const: Dennis trace map}) for a weak assembler (\cref{def:Assemblers}) with a Morita object (\cref{def: ZP}), $\CC$. The construction follows the construction of Dennis trace map for rings outlined in \cref{sec:Dennis_trace_for_rings}. The analogue to the first object in \eqref{Eq:Dennis_trace_sequence}, for us, will be $B\gG(\CC)$ from \cref{def:Assemblers_K_theory} and the analogue to the second object will be the Hochschild complex of $\bbZ\mM(\CC)$. The analogue of the last object will be Hochschild complex of $\bbZ P(\CC)$. We also explicitly study the Dennis trace map at level $0$ (\cref{thm: Dennis trace at level 0}). 

\begin{construction}\label{const: Dennis trace map}
The Dennis trace map will be a composite of a few maps, each of which we first construct individually:
\begin{enumerate}
    \item\label{const: Dennis trace map 1} For a groupoid, $G$, there is a natural map from $BG$, the nerve of $G$, to $B^{cyc}G$, the cyclic nerve of $G$ as in \cref{Notations}(\ref{Not:cyclic_nerve}). We use it to map $$cyc: B\gG(\CC) \to B^{cyc}\gG(\CC).$$ This will be our first map towards Dennis trace map.
    \item\label{const: Dennis trace map 2} The composite of the map from $\gG(\CC) \to \mM(\CC)$ in \eqref{eq:WC to GC to MC} and the map from $\mM(\CC) \to \bbZ\mM(\CC)$ in \eqref{eq:MC to ZMC} induces a map
    \begin{equation*}
     B^{cyc}\gG(\CC) \to B^{cyc}\bbZ\mM(\CC).   
    \end{equation*}
    Composing this with the map $$B^{cyc}\bbZ\mM(\CC) \to HH(\bbZ\mM(\CC))$$ from \cref{Notations}(\ref{Not:cyclic_bar_complex}) gives
    $$I: B^{cyc}\gG(\CC) \to HH(\bbZ\mM(\CC)).$$ This will be our second map.
    \item\label{const: Dennis trace map 3} Next we have the equivalence $HH(\bbZ\mM(\CC))\simeq HH(\bbZ P(\CC))$ from \cref{corr: HH of ZM and ZP}. This will be the final part of our Dennis trace construction.   
\end{enumerate}
 The Dennis trace map is defined using the composite of (\ref{const: Dennis trace map 1}), (\ref{const: Dennis trace map 2}) and (\ref{const: Dennis trace map 3}) above
 \begin{equation}\label{eq:pre dennis trace}
     B\gG(\CC) \to HH(\bbZ\mM(\CC)) \simeq HH(\bbZ P(\CC))
 \end{equation}
 by taking the group completion of the associated map of topological spaces
 \begin{align*}
    & \Omega B|B\gG(\CC)| \to \Omega B|HH(\bbZ\mM(\CC))| \simeq \Omega B|HH(\bbZ P(\CC))|\\
    \overset{\cref{def:Assemblers_K_theory}}{\Longrightarrow}& DTr: K(\CC) \to |HH(\bbZ\mM(\CC))| \xrightarrow[]{\simeq} |HH(\bbZ P(\CC))|
 \end{align*}
 in the homotopy category of spaces. For the last implication we use that $\Omega B|HH(\bbZ\mM(\CC))|$ and $\Omega B|HH(\bbZ P(\CC))|$ are group completions of $|HH(\bbZ\mM(\CC))|$ and $|HH(\bbZ P(\CC))|$ which are already group objects. We call this map the Dennis trace map and denote it as $DTr$. 
\end{construction}

To compute where a $K$-theory class goes under the Dennis trace map, we use \eqref{eq:pre dennis trace} and the zigzag between $HH(\bbZ\mM(\CC))$ and $HH(\bbZ P(\CC))$ from \cref{thm: Morita_Invariance_Hochschild_homology}.  
\begin{theorem}\label{thm: Dennis trace at level 0}
Let $[Q]$ be a generating class of $K_0(\CC)$ as in \cref{rmk: K0} and $n$ be the smallest positive integer such that there is an inclusion $Q \to P^n$ as in \cref{def: ZP}, aka there exists a morphism class in $\bbZ\mM(\CC)$
% https://q.uiver.app/#q=WzAsMyxbMCwxLCJRIl0sWzIsMSwiUF57bn0iXSxbMSwwLCJRXzFRXzJcXGNkb3RzIFFfbiJdLFsyLDBdLFsyLDEsIlxccHNpXzFcXHBzaV8yXFxjZG90cyBcXHBzaV9uIiwxLHsic3R5bGUiOnsidGFpbCI6eyJuYW1lIjoiaG9vayIsInNpZGUiOiJ0b3AifX19XV0=
\[\begin{tikzcd}[cramped, column sep=small]
	& {Q_1Q_2\cdots Q_n} & \\
	Q && {P^{n}}
	\arrow[from=1-2, to=2-1]
	\arrow["{\psi_1\psi_2\cdots \psi_n}"{description}, hook, from=1-2, to=2-3]
\end{tikzcd}\]
with $Q_i$ mapping into the $i$th copy of $P$ via a morphism $\psi_i$ in $\Mor(\wW^{par}\CC)$. Then the image of $[Q] \in K_0(\CC)$ under Dennis trace is the class of following element in $HH_0(\bbZ P(\CC))$:
% https://q.uiver.app/#q=WzAsMyxbMSwwLCJRXzEiXSxbMCwxLCJQIl0sWzIsMSwiUCJdLFswLDEsIlxccHNpXzEiLDIseyJzdHlsZSI6eyJ0YWlsIjp7Im5hbWUiOiJob29rIiwic2lkZSI6ImJvdHRvbSJ9fX1dLFswLDIsIlxccHNpXzEiLDAseyJzdHlsZSI6eyJ0YWlsIjp7Im5hbWUiOiJob29rIiwic2lkZSI6InRvcCJ9fX1dXQ==
\[
\begin{tikzcd}[cramped, row sep=small, column sep=small]
	& {Q_1} & \\
	P && P
	\arrow["{\psi_1}"', hook', from=1-2, to=2-1]
	\arrow["{\psi_1}", hook, from=1-2, to=2-3]
\end{tikzcd}
+
\begin{tikzcd}[cramped, row sep=small, column sep=small]
	& {Q_2} & \\
	P && P
	\arrow["{\psi_2}"', hook', from=1-2, to=2-1]
	\arrow["{\psi_2}", hook, from=1-2, to=2-3]
\end{tikzcd}
+
\ \cdots \
+
\begin{tikzcd}[cramped, row sep=small, column sep=small]
	& {Q_n} & \\
	P && P.
	\arrow["{\psi_n}"', hook', from=1-2, to=2-1]
	\arrow["{\psi_n}", hook, from=1-2, to=2-3]
\end{tikzcd}
\]
\end{theorem}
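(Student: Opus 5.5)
We trace the class $[Q]$ through each stage of \cref{const: Dennis trace map} at simplicial level $0$. On $\pi_0$, group completion is the Grothendieck construction on $\pi_0|B\gG(\CC)|$. Since the target is already a group, it suffices to compute the image of the vertex $Q \in B\gG(\CC)_0$. The map $cyc$ sends $Q$ to the $0$-simplex $(id_Q)$ of $B^{cyc}\gG(\CC)$, because the empty product has inverse the identity. The map $I$ then sends this to $id_Q \in \Hom_{\bbZ\mM(\CC)}(Q,Q) \subset HH(\bbZ\mM(\CC))_0$. Thus $DTr_0[Q]$ is the image of $[id_Q] \in HH_0(\bbZ\mM(\CC))$ under the isomorphism of \cref{corr: HH of ZM and ZP}.

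Next we replace $id_Q$ by something living on powers of $P$. Let $\iota: Q \to P^n$ be the given inclusion and $\pi: P^n \to Q$ the corresponding projection $P^n \xhookleftarrow{} Q_1\cdots Q_n \to Q$. By \cref{rem: projection is retract}, $\pi\circ\iota = id_Q$. In $HH_0$ of any $Ab$-category we have the trace relation $[fg]=[gf]$ for $f:a\to b$, $g:b\to a$, coming from the face maps on $\CC(a,b)\otimes\CC(b,a)$. Hence
\[[id_Q]=[\pi\circ\iota]=[\iota\circ\pi]\in HH_0(\bbZ\mM(\CC)).\]
Composing the spans as in \cref{rem:completing cospan of partial covers} (the middle cospan $Q_1\cdots Q_n \to Q \leftarrow Q_1\cdots Q_n$ refines to the identity), we identify
\[\iota\circ\pi = \bigl(P^n \xhookleftarrow{\psi_1\cdots\psi_n} Q_1\cdots Q_n \xhookrightarrow{\psi_1\cdots\psi_n} P^n\bigr).\]
By \cref{rem: Hom of concatenations} together with relation (3) of \cref{def: category of scissors correspondences}, this endomorphism of $P^n$ corresponds to the $n\times n$ matrix whose $(i,j)$ entry is $P\xhookleftarrow{}S_{ij}\xhookrightarrow{}P$. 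Because $Q_i$ lands only in the $i$-th copy of $P$ on both sides, disjointness forces the off-diagonal pieces $S_{ij}$ ($i\ne j$) to be empty, hence $0$. The matrix is therefore diagonal with entries $e_i:=(P\xhookleftarrow{\psi_i}Q_i\xhookrightarrow{\psi_i}P)$, and we can write
\[\iota\pi=\sum_i i_k\, e_k\, \pi_k,\]
where $i_k$ and $\pi_k$ are the coordinate inclusions and projections.

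Applying the trace relation once more gives $[i_k e_k \pi_k]=[e_k\pi_k i_k]=[e_k]$, using $\pi_k i_k = id_P$. Therefore
\[[id_Q]=\sum_k [e_k]\in HH_0(\bbZ\mM(\CC)),\]
and each summand lies in the image of $HH_0(\bbZ P(\CC))\to HH_0(\bbZ\mM(\CC))$. It remains to check that the equivalence of \cref{thm: Morita_Invariance_Hochschild_homology}, read on $\pi_0$, is inverse to the map induced by the inclusion $i$. This holds because the zigzag diagram there commutes with the vertical maps induced by $F=i$. So the class $\sum_k[e_k]\in HH_0(\bbZ P(\CC))$ is the image, as claimed.

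The main obstacle is justifying the identification of $\iota\circ\pi$ with the diagonal sum $\sum_k i_k e_k\pi_k$. This requires checking, through relation (3) and the splitting maps $F,G$, that the cross terms genuinely vanish, that is, that the refinements $S_{P\cap T}$ computed there recover exactly $Q_k$ in the $k$-th slot. The minimality of $n$ plays no role in this argument. It only fixes a choice of representative; independence of the choice follows from $HH_0$ being well defined.
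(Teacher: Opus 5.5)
Your argument is correct, but it is organized differently from the paper's proof. The paper never identifies the Morita isomorphism on $HH_0$ with $i_*^{-1}$. Instead it pushes an explicit level-$0$ element through the bottom zigzag of the diagram in \cref{thm: Morita_Invariance_Hochschild_homology}. Namely, $\sum_i (P\xhookleftarrow{\psi_i}Q_i\xrightarrow{id}Q_i)\otimes(Q_i\xleftarrow{id}Q_i\xhookrightarrow{\psi_i}P)\in\bigoplus_R\Hom(P,R)\otimes\Hom(R,P)$ maps to $\sum_i e_i$ at one end and to $\sum_i id_{Q_i}$ at the other. The paper then shows separately that $[id_Q]=[id_{Q_1\cdots Q_n}]=\sum_i[id_{Q_i}]$ in $HH_0(\bbZ\mM(\CC))$. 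For this it uses the $1$-simplex built from an isomorphism $f\colon Q_1\cdots Q_n\to Q$ and its inverse, together with relation (3) applied to the source $Q_1\cdots Q_n$. So the paper splits on the $Q$ side and lets the bar-construction zigzag carry each $Q_i$ over to $P$. You instead conjugate $id_Q=\pi\iota$ to $\iota\pi$ on $P^n$, split on the $P^n$ side, conjugate each summand $i_ke_k\pi_k$ down to $e_k$, and stay inside $HH_0(\bbZ\mM(\CC))$ throughout.

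Your version avoids the bar constructions entirely and uses only $[fg]=[gf]$ and relation (3). This also makes it transparent that neither the chosen inclusion nor the minimality of $n$ matters. Its cost is that you must identify the Morita isomorphism with $i_*$ on $HH_0$, and commutativity of the diagram alone does not give this. You also need the top row to induce the identity on $HH_0(\bbZ P(\CC))$. That is immediate at level $0$: an element $a\otimes b$ of the middle term goes to $ab$ at the left end and to $ba$ at the right end, and $[ab]=[ba]$. You should state this explicitly.

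Finally, the step you flag as the main obstacle is routine. Apply relation (3) only on the source $P^n$ to $T=Q_1\cdots Q_n$. This gives $S_{P_k\cap T}=Q_k$ on the nose, because the index map of the partial cover sends the pieces of $Q_k$ to the $k$-th copy of $P$. Direct composition then gives $(P^n\xhookleftarrow{}Q_k\xhookrightarrow{}P^n)=i_ke_k\pi_k$, so no target-side matrix decomposition is needed.
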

\begin{proof}
Since $B\gG(\CC) \simeq \sqcup_{[Q]}B\Aut(Q)$ where $[Q]$ varies over isomorphism classes of objects in $\gG(\CC)$, a $K_0(\CC)$ class is given by an isomorphism class $[Q]$ (there are formal inverses in $K_0$ group but it is enough to find the image under Dennis trace of these generators). Under the map \eqref{eq:pre dennis trace}, $Q$ in $B\gG(\CC)_0$ maps to $id: Q\xrightarrow[]{} Q$ in $B^{cyc}\gG(\CC)_0$ and therefore further to 
\begin{equation}\label{eq:K0 class}
    id: Q\xrightarrow[]{} Q
\end{equation}
in $\Hom_{\bbZ\mM(\CC)}(Q,Q)$ in $HH(\bbZ\mM(\CC))_0$. Next we use \cref{thm: Morita_Invariance_Hochschild_homology}, to find an element of $HH(\bbZ P(\CC))_0$ that corresponds to this element of $HH(\bbZ\mM(\CC))_0$. Writing out the zig-zag for the categories $\bbZ P(\CC)$ and $\bbZ\mM(\CC)$ at simplicial level $0$, gives the following commuting diagram where for ease we just write $\Hom$ without clarifying the category we are taking $\Hom$ in. All the $\Hom$ are in fact $\Hom_{\bbZ\mM(\CC)}$ in this commuting diagram:
 \[
\begin{tikzcd}[cramped]
    \Hom(P,P) \ar[d,"="] & \Hom(P,P)\otimes\Hom(P,P) \ar[l,"multiply"'] \ar[r,"switch"] \ar[d] & \Hom(P,P)\otimes\Hom(P,P) \ar[d] \ar[r,"multiply"] & \Hom(P,P) \ar[d]\\
    \Hom(P,P) & \bigoplus\limits_{R}(\Hom(P,R)\otimes\Hom(R,P)) \ar[l,"multiply"'] \ar[r,"switch"] & \bigoplus\limits_{R} (\Hom(R,P)\otimes \Hom(P,R))
    \ar[r,"multiply"] & \bigoplus\limits_{R}\Hom(R,R)
\end{tikzcd}
\]  
and in the direct sums, $R$ varies over all objects of $\bbZ\mM(\CC)$. Note that all vertical maps except the first are inclusion into the direct summands. Consider the object 
% https://q.uiver.app/#q=WzAsMyxbMCwxLCJQIl0sWzEsMCwiUV9pIl0sWzIsMSwiUV9pIl0sWzEsMCwiXFxwc2lfaSIsMix7InN0eWxlIjp7InRhaWwiOnsibmFtZSI6Imhvb2siLCJzaWRlIjoiYm90dG9tIn19fV0sWzEsMiwiXFxvdmVybGluZXtpZH1fe1xccGhpfSJdXQ==
\[\sum\limits_{i=1}^n
\left(\begin{tikzcd}[cramped, row sep=small, column sep=small]
	& {Q_i} & \\
	P && {Q_i}
	\arrow["{\psi_i}"', hook', from=1-2, to=2-1]
	\arrow["{id}", from=1-2, to=2-3]
\end{tikzcd}
\otimes
% https://q.uiver.app/#q=WzAsMyxbMCwxLCJRX2kiXSxbMSwwLCJRX2kiXSxbMiwxLCJQIl0sWzEsMiwiXFxwc2lfaSIsMCx7InN0eWxlIjp7InRhaWwiOnsibmFtZSI6Imhvb2siLCJzaWRlIjoidG9wIn19fV0sWzEsMCwiXFxvdmVybGluZXtpZH1fe1xccGhpfSIsMl1d
\begin{tikzcd}[cramped, row sep=small, column sep=small]
	& {Q_i} & \\
	{Q_i} && P
	\arrow["{id}"', from=1-2, to=2-1]
	\arrow["{\psi_i}", hook, from=1-2, to=2-3]
\end{tikzcd}\right)
\]
in $\oplus_{R}(\Hom(P,R)\otimes\Hom(R,P))$. Under the map to the left of $\oplus_{R}(\Hom(P,R)\otimes\Hom(R,P))$ in the diagram, this object goes to 
% https://q.uiver.app/#q=WzAsMyxbMCwxLCJQIl0sWzEsMCwiUV9pIl0sWzIsMSwiUCJdLFsxLDAsIlxccHNpX2kiLDIseyJzdHlsZSI6eyJ0YWlsIjp7Im5hbWUiOiJob29rIiwic2lkZSI6ImJvdHRvbSJ9fX1dLFsxLDIsIlxccHNpX2kiLDAseyJzdHlsZSI6eyJ0YWlsIjp7Im5hbWUiOiJob29rIiwic2lkZSI6InRvcCJ9fX1dXQ==
\begin{equation}\label{eqn:target}
\sum\limits_{i=1}^n
\begin{tikzcd}[cramped, row sep=small, column sep=small]
	& {Q_i} & \\
	P && P
	\arrow["{\psi_i}"', hook', from=1-2, to=2-1]
	\arrow["{\psi_i}", hook, from=1-2, to=2-3]
\end{tikzcd}
\end{equation}
in $\Hom(P,P)$. Whereas, under the map to the right this object goes to first
% https://q.uiver.app/#q=WzAsMyxbMCwxLCJRX2kiXSxbMSwwLCJRX2kiXSxbMiwxLCJQIl0sWzEsMiwiXFxwc2lfaSIsMCx7InN0eWxlIjp7InRhaWwiOnsibmFtZSI6Imhvb2siLCJzaWRlIjoidG9wIn19fV0sWzEsMCwiXFxvdmVybGluZXtpZH1fe1xccGhpfSIsMl1d
\[\sum\limits_{i=1}^n
\left(\begin{tikzcd}[cramped, row sep=small, column sep=small]
	& {Q_i} & \\
	{Q_i} && P
	\arrow["{id}"', from=1-2, to=2-1]
	\arrow["{\psi_i}", hook, from=1-2, to=2-3]
\end{tikzcd}
\otimes
\begin{tikzcd}[cramped, row sep=small, column sep=small]
	& {Q_i} & \\
	P && {Q_i}
	\arrow["{\psi_i}"', hook', from=1-2, to=2-1]
	\arrow["{id}", from=1-2, to=2-3]
\end{tikzcd}\right)
\]
and then to
% https://q.uiver.app/#q=WzAsMyxbMSwwLCJRX2kiXSxbMCwxLCJRX2kiXSxbMiwxLCJRX2kiXSxbMCwxLCJpZCIsMl0sWzAsMiwiaWQiXV0=
\begin{equation}\label{eqn:domain}
\sum\limits_{i=1}^n
\begin{tikzcd}[cramped, row sep=small, column sep=small]
	& {Q_i} & \\
	{Q_i} && {Q_i}
	\arrow["id"', from=1-2, to=2-1]
	\arrow["id", from=1-2, to=2-3]
\end{tikzcd}
\end{equation}
in $\oplus_{R}\Hom(R,R)$. So far we have shown, under the zigzag, object \eqref{eqn:domain} in $HH(\bbZ\mM(\CC))_0$ corresponds to \eqref{eqn:target} in $HH(\bbZ P(\CC))_0$. 

Now note that we have the following two objects in $HH(\bbZ\mM(\CC))_1$, where $f$ is any isomorphism between $Q_1Q_2\cdots Q_n$ and $Q$:
\begin{equation}
% https://q.uiver.app/#q=WzAsMyxbMCwxLCJRXzFRXzJcXGNkb3RzIFFfbiJdLFsyLDEsIlEiXSxbMSwwLCJRXzFRXzJcXGNkb3RzIFFfbiJdLFsyLDAsImlkIiwyXSxbMiwxLCJmIl1d
\begin{tikzcd}[cramped, row sep=small, column sep=small]
	& {Q_1Q_2\cdots Q_n} & \\
	{Q_1Q_2\cdots Q_n} && Q
	\arrow["id"', from=1-2, to=2-1]
	\arrow["f", from=1-2, to=2-3]
\end{tikzcd}
\ \otimes \
% https://q.uiver.app/#q=WzAsMyxbMCwxLCJRIl0sWzIsMSwiUV8xUV8yXFxjZG90cyBRX24iXSxbMSwwLCJRXzFRXzJcXGNkb3RzIFFfbiJdLFsyLDAsImYiLDJdLFsyLDEsImlkIl1d
\begin{tikzcd}[cramped, row sep=small, column sep=small]
	& {Q_1Q_2\cdots Q_n} & \\
	Q && {Q_1Q_2\cdots Q_n}
	\arrow["f"', from=1-2, to=2-1]
	\arrow["id", from=1-2, to=2-3]
\end{tikzcd}
\end{equation}
and
\begin{equation}
\sum\limits_{i=1}^n
    % https://q.uiver.app/#q=WzAsMyxbMSwwLCJRX2kiXSxbMCwxLCJRX2kiXSxbMiwxLCJRXzFRXzJcXGNkb3RzIFFfbiJdLFswLDEsImlkIiwyXSxbMCwyLCIiLDAseyJzdHlsZSI6eyJ0YWlsIjp7Im5hbWUiOiJob29rIiwic2lkZSI6InRvcCJ9fX1dXQ==
\left(\begin{tikzcd}[cramped, row sep=small, column sep=small]
	& {Q_i} & \\
	{Q_i} && {Q_1Q_2\cdots Q_n}
	\arrow["id"', from=1-2, to=2-1]
	\arrow[hook, from=1-2, to=2-3]
\end{tikzcd}
\ \otimes \
% https://q.uiver.app/#q=WzAsMyxbMSwwLCJRX2kiXSxbMCwxLCJRXzFRXzJcXGNkb3RzIFFfbiJdLFsyLDEsIlFfaSJdLFswLDIsImlkIl0sWzAsMSwiIiwyLHsic3R5bGUiOnsidGFpbCI6eyJuYW1lIjoiaG9vayIsInNpZGUiOiJib3R0b20ifX19XV0=
\begin{tikzcd}[cramped, row sep=small, column sep=small]
	& {Q_i} & \\
	{Q_1Q_2\cdots Q_n} && {Q_i}
	\arrow[hook', from=1-2, to=2-1]
	\arrow["id", from=1-2, to=2-3]
\end{tikzcd}\right)
\end{equation}
Taking $d_0$ and $d_1$ of these objects and then using the relation $d_0 = d_1$ in $HH_{0}(\bbZ\mM(\CC))$ gives the following equality of $HH_0$ classes:
\[
\begin{aligned}
\begin{tikzcd}[cramped, row sep=small, column sep=small]
  & {Q_1Q_2\cdots Q_n} & \\
  {Q_1Q_2\cdots Q_n} && {Q_1Q_2\cdots Q_n}
  \arrow["id"', from=1-2, to=2-1]
  \arrow["id", from=1-2, to=2-3]
\end{tikzcd}
&=
\begin{tikzcd}[cramped, row sep=small, column sep=small]
  & {Q_1Q_2\cdots Q_n} & \\
  Q && Q
  \arrow["f"', from=1-2, to=2-1]
  \arrow["f", from=1-2, to=2-3]
\end{tikzcd}
\\[1ex]
&=
\begin{tikzcd}[cramped, row sep=small, column sep=small]
  & Q & \\
  Q && Q
  \arrow["id"', from=1-2, to=2-1]
  \arrow["id", from=1-2, to=2-3]
\end{tikzcd}
\end{aligned}
\]
where the second equality is due to the two morphisms being equivalent in $\mM(\CC)$ via
% https://q.uiver.app/#q=WzAsNSxbMSwwLCJRXzFRXzJcXGNkb3RzIFFfbiJdLFswLDEsIlEiXSxbMiwxLCJRIl0sWzEsMiwiUSJdLFsxLDEsIlFfMVFfMlxcY2RvdHMgUV9uIl0sWzAsMSwiZiIsMl0sWzAsMiwiZiJdLFszLDEsImlkIl0sWzMsMiwiaWQiLDJdLFswLDQsImlkIiwyXSxbNCwzLCJmIiwyXV0=
\[\begin{tikzcd}[cramped]
	& {Q_1Q_2\cdots Q_n} & \\
	Q & {Q_1Q_2\cdots Q_n} & Q\\
	& Q
	\arrow["f"', from=1-2, to=2-1]
	\arrow["id"', from=1-2, to=2-2]
	\arrow["f", from=1-2, to=2-3]
	\arrow["f"', from=2-2, to=3-2]
	\arrow["id", from=3-2, to=2-1]
	\arrow["id"', from=3-2, to=2-3]
\end{tikzcd}\]
and
\[
\begin{aligned}
\sum_{i=1}^n
\begin{tikzcd}[cramped, row sep=small, column sep=small]
  & {Q_i} & \\
  {Q_i} && {Q_i}
  \arrow["id"', from=1-2, to=2-1]
  \arrow["id", from=1-2, to=2-3]
\end{tikzcd}
&=
\sum_{i=1}^n
\begin{tikzcd}[cramped, row sep=small, column sep=small]
  & {Q_i} & \\
  {Q_1Q_2\cdots Q_n} && {Q_1Q_2\cdots Q_n}
  \arrow[hook', from=1-2, to=2-1]
  \arrow[hook, from=1-2, to=2-3]
\end{tikzcd}
\\[1ex]
&=
\begin{tikzcd}[cramped, row sep=small, column sep=small]
  & {Q_1Q_2\cdots Q_n} & \\
  {Q_1Q_2\cdots Q_n} && {Q_1Q_2\cdots Q_n}
  \arrow["id"', from=1-2, to=2-1]
  \arrow["id", from=1-2, to=2-3]
\end{tikzcd}
\end{aligned}
\]
where the last equality is due to \cref{def: category of scissors correspondences}(3). Thus, in $HH_{0}\bbZ\mM(\CC)$, we have class \eqref{eq:K0 class} equals class \eqref{eqn:domain}. Hence, object $Q \xleftarrow[]{id}Q\xrightarrow[]{id}Q$ maps to object \eqref{eqn:target} under the map $HH_0(\bbZ\mM(\CC)) \to HH_0(\bbZ P(\CC))$ and therefore, the Dennis trace map takes a generating class $[Q]$ to \eqref{eqn:target}.
\end{proof}
\begin{example}
    For the weak assembler $\textbf{FinSet}$, the Dennis trace map at level $0$ is  
    $$DTr_0: K_0(\textbf{FinSet}) \to HH_0(\bbZ) \cong \bbZ.$$
    Here, $K_{0}$ is also isomorphic to $\bbZ$, where a set $X$ is in class $n$ if its cardinality is $n$. If we choose a class representative for $n$ to be $Q:= \{1,2,\cdots ,n\}$, then under the Dennis trace map this goes to $$n(\{1\} \xleftarrow[]{id}\{1\}\xrightarrow{id}\{1\})$$ (using \cref{ex: Morita object in FinSet}), i.e. it maps to $n\in \bbZ \cong HH_0(\bbZ)$. So, $DTr_0$ is identity map for $\textbf{FinSet}$.
\end{example}
\begin{example}
Similar to \cref{rem: image of K0(R)}, in $\PP_G^X$, the image of a $K_0$ polytope class $[Q]$ is given by its `dimension' aka the embedding of its pieces into the Morita object.
\end{example}
%%%%%%%%%%%%%%%%%%%%%%%%%%%%%%%%%%%%%%%%%%%%%%%%%%%%%%%%%%%%%%%%%
%%%%%%%%%%%%%%%%%%%%%%%%%%%Subsection 3.5%%%%%%%%%%%%%%%%%%%%%%%
%%%%%%%%%%%%%%%%%%%%%%%%%%%%%%%%%%%%%%%%%%%%%%%%%%%%%%%%%%%%%%%
\section{Dennis Trace as Refinement of Trace/Regulator}\label{subsection:Comparision_with_BGMMZ}

In \cite{bohmann2024trace}, the authors construct a map from assembler $K$-theory to group homology which they call the \emph{regulator} or \emph{trace} map. Here we show that their map factors through the Dennis trace map $DTr$, thus showing that the group homology is in fact a trace invariant. To make this comparison, we first define weak $G$-assembler and a measure $A$ on it following \cite[Definition 4.1, 4.3, 6.1]{bohmann2024trace}. We then construct a simplicial map $T_{\wW}$ from $HH(\bbZ\mM(\CC_{hG}))$ to the double sided bar complex associated to group homology of $G$ with coefficients in $A$, $B(\bbZ,\bbZ[G],A)$ (\cref{const: T_M(C)}, \cref{prop: well-defined}, \cref{prop: simplicial}) and use it show that the regulator map factors through the Dennis trace in \cref{thm: regulator factors through Dennis trace}.
\begin{definition}\label{def:G weak assembler}
    A \emph{weak $G$-assembler} is a functor $F: G \rightarrow \textbf{wAsm}$. 
\end{definition}
We shall interchangeably call the image of the functor $F$ say $\CC=F(\ast)$ as the weak $G$-assembler. $\CC$ inherits an action of the group $G$ on objects as well the covering families i.e. if $\mathcal{F}_Q=\{P_i \to Q\}$ is a cover then $g\cdot \mathcal{F}_Q:=\{gP_i\to gQ\}$ is also a cover, and $(g h)\cdot  \mathcal{F}_Q = g\cdot (h \cdot \mathcal{F}_Q)$.
%\rbnote[]{do I give one example here?}

\begin{definition}\label{def:homotopy orbit}
    For a weak $G$-assembler $\CC$, the \emph{homotopy orbit} $\CC_{hG}$ %\rbnote[]{check if this is a weak assembler and describe the covering family structure}
    is the category whose objects are same as $obj(\CC)$ and the morphisms are pairs $(f,g):P \xrightarrow{} Q$ such that $f: gP \xrightarrow{} Q $ is a morphism in $\CC$ and $g\in G$. Composition of two such morphisms is given by $(f_1,g_1)\circ (f_0,g_0) = (f_1 \circ g_1(f_0), g_1g_0)$: 
    \[P\xrightarrow[]{(f_0,g_0)}Q \xrightarrow[]{(f_1,g_1)}R := g_1g_0P\xrightarrow[]{g_1f_0}g_1Q \xrightarrow[]{f_1}R\in \CC\]
\end{definition}

Since $\varnothing\in \CC$ is the unique initial object and the distinguished base point and it does not have any non-trivial automorphisms, $g\cdot \varnothing = \varnothing$ for all $g\in G$. Any morphism $\varnothing \to B$ in $\CC_{hG}$ is defined to be the corresponding unique morphism in $\CC$. This ensures the group action is pointed. Under the hypothesis that a collection $$\{P_i\xrightarrow[]{(f_i,g_i)} Q\}_{i\in I}$$ is a cover in $\CC_{hG}$ if $\{f_i:g_i P_i\xrightarrow[]{} Q\}_{i\in I}$ is a cover in $\CC$, $\CC_{hG}$ becomes a weak assembler. 

\begin{lemma}
    $\CC_{hG}$ is a weak assembler.
\end{lemma}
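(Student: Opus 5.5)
We verify the axioms of \cref{def: category with families} and \cref{def:Assemblers} for $\CC_{hG}$ one at a time, translating each statement about $\CC_{hG}$ into a statement about $\CC$ and the $G$-action. The key tool is that each $g\in G$ acts by a functor $F(g)\colon\CC\to\CC$ in $\textbf{wAsm}$. Such a functor is an isomorphism of weak assemblers with inverse $F(g^{-1})$. Hence it preserves covers, initial objects, pullbacks and monomorphisms, and it reflects them as well.

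\textbf{Covering family structure.} The identity of $A$ in $\CC_{hG}$ is $(Id_A,e)$, which is a cover because $\{Id_A\}$ is a cover in $\CC$. For composition, take a cover $\{(g_j',h_j)\colon B_j\to C\}$ and, for each $j$, a cover $\{(f_{ij},k_{ij})\colon A_{ij}\to B_j\}$. Each composite is $(g_j'\circ h_j(f_{ij}),\,h_jk_{ij})$. Since $h_j$ acts by a map in $\textbf{wAsm}$, the family $\{h_j(f_{ij})\colon h_jk_{ij}A_{ij}\to h_jB_j\}_i$ is a cover in $\CC$. Composing with the cover $\{g_j'\colon h_jB_j\to C\}$ gives a cover in $\CC$, as required. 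The basepoint conditions follow from $g\varnothing=\varnothing$ and the stated convention on maps out of $\varnothing$. We use that $\CC(c,\varnothing)=\emptyset$ for $c\neq\varnothing$, so $\CC_{hG}(c,\varnothing)=\emptyset$. Every $(\varnothing\to\varnothing)$ is identified with the unique morphism, so $\CC_{hG}(\varnothing,\varnothing)=\{Id\}$. Finally, families of copies of $\varnothing$ are covers.

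\textbf{(I) and (M).} Because of the convention, $\varnothing$ is initial in $\CC_{hG}$, and the empty family covers it. For (M), suppose $(f,g)\circ(a,h)=(f,g)\circ(b,k)$. Comparing group components gives $gh=gk$, so $h=k$. Comparing $\CC$-components gives $f\circ g(a)=f\circ g(b)$. Since $f$ is monic, $g(a)=g(b)$, and since $g$ acts faithfully, $a=b$. The only subtlety is the identification of all maps out of $\varnothing$, and this case is trivially monic.

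\textbf{(D) and (R).} We expect (D) to be the main obstacle, because pullbacks in $\CC_{hG}$ are not simply pullbacks in $\CC$. Given a cover $\{(f_i,g_i)\colon A_i\to B\}$, we propose to show that for $i\neq j$ the object $\varnothing$ is a pullback of $A_i$ and $A_j$ over $B$. Suppose $X$ admits maps $(a,h)\colon X\to A_i$ and $(b,k)\colon X\to A_j$ that agree after composing to $B$. Then $hX\cong g_i^{-1}$-translate data, and we obtain a cone in $\CC$ over $f_i\colon g_iA_i\to B$ and $f_j\colon g_jA_j\to B$ with vertex $g_ihX=g_jkX$. The disjointness axiom in $\CC$ makes this pullback initial. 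So $g_ihX$ maps to $\varnothing$, which forces $g_ihX=\varnothing$ and hence $X=\varnothing$. Therefore the only cones come from $\varnothing$, and $\varnothing$ is the pullback. One must check that this works with the convention on maps out of $\varnothing$; it does, because every map out of $\varnothing$ is unique.

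For (R), take two covers $\{(f_i,g_i)\}$ and $\{(f'_j,g'_j)\}$ of $B$. Their translates $\{f_i\}$ and $\{f'_j\}$ are covers in $\CC$, so they have a common refinement there. This refinement consists of covers $\{c_{il}\colon D_{il}\to g_iA_i\}$ and covers of each $g'_jA'_j$ whose composites agree. Pull each refinement piece back to a morphism $(g_i^{-1}(c_{il}),e)$ into $A_i$, after translating $D_{il}$ by $g_i^{-1}$, or equivalently use $(c_{il}\text{-translate},\,g_i^{-1})$. Then check that the resulting composite families in $\CC_{hG}$ coincide as families of morphisms. Matching the group labels exactly needs care, so we would choose the refined pieces to be objects $D$ with maps $(c,g_i^{-1})$, so that the composites are $(f_i c,e)$ on both sides.
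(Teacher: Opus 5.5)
Your proposal is correct and follows essentially the same route as the paper. Like the paper, you translate each axiom into $\CC$ via the $G$-action. For (D) you show any cone vertex $X$ gives $g_ihX\to g_iA_i\times_B g_jA_j\cong\varnothing$, which forces $X=\varnothing$; for (R) you transport a common refinement from $\CC$ so that all composite group labels are $e$; and for (M) you cancel $g$ and then the monic $f$.

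You are somewhat more careful than the paper about the covering-family axioms and the convention on maps out of $\varnothing$. In (R), only your second option works, and the leg into $A_i$ should be written $(g_i^{-1}(c_{il}),g_i^{-1})\colon D_{il}\to A_i$, whose composite with $(f_i,g_i)$ is $(f_i\circ c_{il},e)$. The first option, with label $e$ on $g_i^{-1}D_{il}$, would leave labels $g_i$ and $g'_j$ on the two sides, so the families would not literally coincide.
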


\begin{proof}
It is easy to see that this is a category with covering family. We check the axioms for weak assembler. \textbf{(I)} holds as the initial object $\varnothing$ acts as the distinguished base point in $\CC_{hG}$. To check \textbf{(D)}, consider for any covering family $$\{P_i\xrightarrow[]{(f_i,g_i)} Q\}_{i\in I}$$ of $\CC_{hG}$ and any $i \neq j$, the following commutative diagram in $\CC_{hG}$ and its corresponding diagram in $\CC$ respectively,
% https://q.uiver.app/#q=WzAsOCxbMCwwLCJSIl0sWzEsMCwiUF9pIl0sWzAsMSwiUF9qIl0sWzEsMSwiUSJdLFszLDAsImdfaWhfaVIiXSxbNCwwLCJnX2lQX2kiXSxbMywxLCJnX2pQX2oiXSxbNCwxLCJRIl0sWzAsMSwiKGEsaF9pKSJdLFsxLDMsIihmX2ksZ19pKSJdLFswLDIsIihiLGhfaikiLDJdLFsyLDMsIihmX2osZ19qKSIsMl0sWzQsNSwiZ19paF9pIl0sWzQsNiwiZ19qaF9qIiwyXSxbNiw3LCJmX2oiXSxbNSw3LCJmX2kiXSxbMCwzLCJcXGluIFxcQ0Nfe2h7R319IiwxLHsibGFiZWxfcG9zaXRpb24iOjYwLCJzaG9ydGVuIjp7InNvdXJjZSI6NjB9LCJzdHlsZSI6eyJib2R5Ijp7Im5hbWUiOiJub25lIn0sImhlYWQiOnsibmFtZSI6Im5vbmUifX19XSxbNCw3LCJcXGluIFxcQ0MiLDEseyJzdHlsZSI6eyJib2R5Ijp7Im5hbWUiOiJub25lIn0sImhlYWQiOnsibmFtZSI6Im5vbmUifX19XV0=
\[\begin{tikzcd}
	R & {P_i} && {g_ih_iR} & {g_iP_i} \\
	{P_j} & Q && {g_jP_j} & Q
	\arrow["{(a,h_i)}", from=1-1, to=1-2]
	\arrow["{(b,h_j)}"', from=1-1, to=2-1]
	\arrow["{\in \CC_{h{G}}}"{description, pos=0.6}, draw=none, from=1-1, to=2-2]
	\arrow["{(f_i,g_i)}", from=1-2, to=2-2]
	\arrow["{g_ih_i}", from=1-4, to=1-5]
	\arrow["{g_jh_j}"', from=1-4, to=2-4]
	\arrow["{\in \CC}"{description}, draw=none, from=1-4, to=2-5]
	\arrow["{f_i}", from=1-5, to=2-5]
	\arrow["{(f_j,g_j)}"', from=2-1, to=2-2]
	\arrow["{f_j}", from=2-4, to=2-5]
\end{tikzcd}\]
The diagram commutes in $\CC_{hG}$ iff the corresponding diagram in $\CC$ commutes and $g_ih_i = g_j h_j$. By definition of covers in $\CC_{hG}$ and universal property of pullback, we have a map $g_ih_iR \to g_iP_i \times_{Q} g_jP_j \cong \varnothing$. Because any object mapping into $\varnothing$ must be isomorphic to $\varnothing$ in $\CC$, it follows that $g_ih_iR \cong \varnothing$. Since $G$ acts via automorphisms, $R \cong (g_ih_i)^{-1}\varnothing = \varnothing$ showing that the pullback of covering family exists and is $\varnothing$ in $\CC_{hG}$.

To verify axiom \textbf{(R)} for $\CC_{hG}$, let $$\{P_i \xrightarrow{(a^i, g^i)} Q\}_{i \in I} \text{ and } \{R_j \xrightarrow{(b^j, h^j)} Q\}_{j \in J}$$ be two covering families of $Q$ in $\CC_{hG}$. Then $\{g^i P_i \xrightarrow{a^i} Q\}_{i \in I}$ and $\{h^j R_j \xrightarrow{b^j} Q\}_{j \in J}$ are covering families of $Q$ in $\CC$. Let $S_{ij} = g^i P_i \times_Q h^j R_j$ be the common refinement pieces in $\CC$, equipped with covering maps $$S_{ij} \xrightarrow{u_{ij}} g^i P_i \text{ and } S_{ij} \xrightarrow[]{v_{ij}} h^j R_j.$$ Then $$\{S_{ij} \xrightarrow{(a^i \circ u_{ij}, e)} Q\}_{(i,j) \in I \times J}$$ is the required refinement in $\CC_{hG}$.

To check \textbf{(M)}, suppose $(a,g)\circ (b_1,h_1)= (a,g) \circ (b_2,h_2)$. Expanding this gives $(a\circ gb_1, gh_1) = (a\circ gb_2, gh_2)$. Equality of second coordinate dictates $h_1=h_2$ and equality of first coordinate gives $a\circ gb_1 = a\circ gb_2 \in \CC$. As all morphisms in $\CC$ are monomorphism, $ gb_1 = gb_2$, which yields $b_1= b_2$. Therefore $(b_1,h_1)=(b_2,h_2)$.
\end{proof}

\begin{definition} \label{def: measure}
    A \emph{measure} on a weak $G$-assembler $\CC$ with values in $A \in \bbZ[G]\text{-}\operatorname{Mod}$ is a $G$-equivariant function $\mu: obj (\CC) \rightarrow A$ which is additive on covering families: $\mu(Q)=\sum\mu(P_i)$ for every covering family $\{P_i \rightarrow Q\}_{i \in I}$ in $\CC$. 
\end{definition}

Any such measure $\mu$ on weak $G$-assembler $\CC$ gives rise to an explicit trace map/regulator map (see \cite[Lemma 6.8, Definition 6.11]{bohmann2024trace}) from K-groups $K_{n}(\CC_{hG})$ to group homology $H_n(G;A)$. We recall the construction briefly here. 

\begin{notation}
    We fix a notation for morphisms between covers in $\wW(\CC_{hG})$. A map of covering families $$P=\{P_i\}_{i\in I }\xrightarrow{(\alpha, f,g)} \{Q_j\}_{j\in J}=Q$$ is a set map $\alpha: I \to J$ and for each $i \in I $, a morphism $P_i \xrightarrow{f_i,g_i} Q_{\alpha(i)}$ in $\CC_{hG}$, such that for each $j \in J $, the collection $$\{ P_i \xrightarrow{f_i,g_i} Q_j\}_{i\in \alpha^{-1}(j)}$$ forms a cover in $\CC_{hG}$. Note here that $f$ and $g$ are placeholders for the names of inclusion maps and the group elements. Henceforth unless otherwise mentioned we use $I, J, K$ for indexing $P, Q, R$ respectively. A subscript like $P_i$ would generally mean a object in the $P=\{P_i\}_{i\in I}$ and a superscript like $P^i$ would refer to an object of $\wW(\CC_{hG})$ (or $\gG(\CC_{hG})$ or $\mM(\CC_{hG})$).
\end{notation}

\begin{notation}
    For any map $P\xhookrightarrow{\overline{f}_{P'}} Q$ of partial covers, we henceforth suppress the $P'$ and by the earlier notation focus on $f|_{P}=(\alpha, a, g)$ which track where each piece is moving. 
\end{notation}

\begin{construction}\label{const: regulator map}\cite[Lemma 6.8]{bohmann2024trace}
    We recall the construction of the regulator map $K(\CC_{hG})\to H(G;A)$. Let $B_{\bullet}^{\otimes}$ be the two sided bar complex with respect to $\otimes$. Then the simplicial map
    $$\operatorname{T_\wW}: B\wW(\CC_{hG}) \xrightarrow[]{} B(\bbZ, \bbZ[G], A)$$
    is defined by sending a $q$-tuple of covers $P^0 \xleftarrow{\beta^1,\,b^1,\,h^1} P^1 \xleftarrow{\beta^2,\,b^2,\,h^2} \cdots \xleftarrow{\beta^q,\,b^q,\,h^q} P^q$ to 

    $$\sum_{i\in I_q} h^1_{i_1} \otimes h^2_{i_2} \otimes \cdots \otimes h^{q-1}_{i_{q-1}} \otimes h^q_{i_q} \otimes \mu (P^q_i) $$
    where $i_k \in I_k$ is defined by first taking $i_q= i$ and iteratively defining $i_{k-1} = \beta^k(i_k)$ for $1\leq k \leq q$. Upon group completion we get the regulator map
    $$\operatorname{tr}: K(\CC_{hG}) := \Omega B |B(\CC_{hG})| \to (HA)_{hG}$$
    or equivalently the map on homotopy groups
    $$\operatorname{tr}: K_n(\CC_{hG}) \to H_n(G;A)$$
\end{construction}

At simplicial level, the regulator map tracks the sequence of symmetries from $G$ that are applied to a piece $P^q_i$ of $P^q$ as it moves through $P^r$s along with its measure.

\begin{construction}\label{const: T_M(C)}
    We construct a simplicial map $$ T_{\mM} : HH(\bbZ\mM(\CC_{hG})) \to B(\bbZ, \bbZ[G], A).$$ 
    \begin{itemize}
    \item First note that the set of $q$-simplices $HH_q (\bbZ \mM(\CC_{hG}))$ is
    \[
    \bigoplus_{P^0,P^1,\ldots,P^q} 
    \Hom_{\mathbb{Z}\mathcal{M}(\mathcal{C}_{hG})}(P^0,P^q) 
    \otimes_{\mathbb{Z}}
    \Hom_{\mathbb{Z}\mathcal{M}(\mathcal{C}_{hG})}(P^1,P^0)
    \otimes_{\mathbb{Z}}
    \cdots
    \otimes_{\mathbb{Z}}
    \Hom_{\mathbb{Z}\mathcal{M}(\mathcal{C}_{hG})}(P^{q},P^{q-1}),
    \] 
    and is generated by $q+1$ composable morphisms (simple tensors) $Z^0\otimes Z^1\otimes\cdots\otimes Z^{q-1}\otimes Z^q$ in $\mathcal{M}(\mathcal{C}_{hG})$,
    \begin{equation}\label{eq: q-simplex in MC}
        P^q
        \xleftarrow{\,Z^0\,}
        P^0
        \xleftarrow{\,Z^1\,}
        P^1
        \xleftarrow{\,Z^2\,}
        \cdots
        \xleftarrow{\,Z^{q-1}\,}
        P^{q-1}
        \xleftarrow{\,Z^q\,}
        P^q.
    \end{equation}
    where $Z^m$ is the equivalence class of zigzags 
    \[
    P^q
    \xhookleftarrow{\beta^0,b^0,h^0}
    Q^0
    \xhookrightarrow{\alpha^0,a^0,g^0}
    P^0, \quad \text{for $m=0$}.
    \]
    \[
    P^{m-1}
    \xhookleftarrow{\beta^m,b^m,h^m}
    Q^m
    \xhookrightarrow{\alpha^m,a^m,g^m}
    P^m,\quad \text{for $1\le m\le q$}
    \]
    Let $R$ be the common refinement of the zigzags of partial cover of the cycle \eqref{eq: q-simplex in MC}. 
    \item We define $T_\mM$ on these simple tensors as follows and extend linearly over $\bbZ$ to the entire Hochschild complex,
    \begin{equation}\label{eq: expanded q-simplex in MC}
    \begin{tikzcd}
    	& {P^1} && {P^2} & \\
    	{P^0} & {\textcolor{blue}{Q^1}} & {\textcolor{blue}{Q^2}} & {\textcolor{blue}{Q^3}} & \vdots \\
    	& {\textcolor{blue}{Q^0}} & {\textcolor{red}{R}} \\
    	{P^q} & {\textcolor{blue}{Q^q}} && {\textcolor{blue}{Q^m}} & {P^{m-1}} \\
    	& \cdots && {P^{m}}
    	\arrow[color={rgb,255:red,68;green,116;blue,238}, hook, from=2-2, to=1-2]
    	\arrow[color={rgb,255:red,68;green,116;blue,238}, hook', from=2-2, to=2-1]
    	\arrow[color={rgb,255:red,68;green,116;blue,238}, hook', from=2-3, to=1-2]
    	\arrow[color={rgb,255:red,68;green,116;blue,238}, hook, from=2-3, to=1-4]
    	\arrow[color={rgb,255:red,68;green,116;blue,238}, hook', from=2-4, to=1-4]
    	\arrow[color={rgb,255:red,68;green,116;blue,238}, hook, from=2-4, to=2-5]
    	\arrow[color={rgb,255:red,68;green,116;blue,238}, hook, from=3-2, to=2-1]
    	\arrow[color={rgb,255:red,68;green,116;blue,238}, hook', from=3-2, to=4-1]
    	\arrow[color={rgb,255:red,214;green,92;blue,92}, hook', from=3-3, to=2-2]
    	\arrow[color={rgb,255:red,214;green,92;blue,92}, hook', from=3-3, to=2-3]
    	\arrow[color={rgb,255:red,214;green,92;blue,92}, hook', from=3-3, to=2-4]
    	\arrow[color={rgb,255:red,214;green,92;blue,92}, hook', from=3-3, to=3-2]
    	\arrow[color={rgb,255:red,214;green,92;blue,92}, hook', from=3-3, to=4-2]
    	\arrow["{(\gamma^m, c^m,r^m)}", color={rgb,255:red,214;green,92;blue,92}, hook', from=3-3, to=4-4]
    	\arrow[color={rgb,255:red,68;green,116;blue,238}, hook, from=4-2, to=4-1]
    	\arrow[color={rgb,255:red,68;green,116;blue,238}, hook', from=4-2, to=5-2]
    	\arrow["{(\beta^m,b^m,h^m)}", color={rgb,255:red,68;green,116;blue,238}, hook', from=4-4, to=4-5]
    	\arrow["{(\alpha^m,a^m,g^m)}"', color={rgb,255:red,68;green,116;blue,238}, hook, from=4-4, to=5-4]
    \end{tikzcd} 
    \end{equation}

    \vspace{1em}

    \[
    T_\mM: Z^0\otimes Z^1\otimes\cdots\otimes Z^{q-1}\otimes Z^q \longmapsto 
    \sum_{k\in K}
    w^1_k
    \otimes
    \cdots
    \otimes
    w^q_k
    \otimes
    \mu\!\left((g^q_{\gamma^q(k)} r^q_k) R_k\right),
    \]
    where $w^m_k= h^m_{\gamma^m(k)}(g^m_{\gamma^m(k)})^{-1} $. Each piece $R_k$ in $R$ lands in the piece $Q^m_{\gamma^m(k)}$ of $Q^m$ and this piece $Q^m_{\gamma^m(k)}$ moves from $P^m$ to $P^{m-1}$ by first being pulled by $g^m_{\gamma^m(k)}$ from $P^m$ and then being pushed by $h^m_{\gamma^m(k)}$ to $P^{m-1}$. $T_\mM$ tracks these symmetries analogous to $T_\wW$. One needs to be careful regarding which measure to use. We first push the piece $R_k$ to $P^q$ via the appropriate group element $g^q_{\gamma^q(k)}r^q_k$ and then take its measure. This is necessary to ensure that $T_\mM$ remains well defined for any $A\in \bbZ[G]\text{-}\operatorname{Mod}$. 
    \end{itemize}
    We show below (\cref{prop: well-defined}, \cref{prop: simplicial}) that this construction is well-defined and in fact simplicial.
\end{construction}

\begin{remark}
    The common refinement of a cycle  of zigzags as in \eqref{eq: expanded q-simplex in MC} can be found by first opening up the cycle at say $Z^0$ and taking a common refinement (composing in $\mM_{\CC_{hG}}$) to get say $P^0 \xhookleftarrow{} R^1 \xhookrightarrow{} P^q$, and then composing with the unused zigzag $Z^0=P^0 \xhookleftarrow{} Q^0\xhookrightarrow{} P^q $ to get $R$.
\end{remark}

\begin{remark}\label{rem: TM depends on cyclicity}
    In the definition of $T_\mM$ it might appear that as $$w^0_k=h^0_{\gamma^0(k)}(g^0_{\gamma^0(k)})^{-1}$$ does not appear, one could define $T_\mM$ by opening up the cycle $Z^0 \otimes Z^1 \otimes \cdots \otimes Z^q$, throwing away $Z^0$ and taking a refinement (or composition in $\mM(\CC_{hG})$) $\widetilde{R}$ of the remaining pieces which would produce a similar looking formula as above. But the main difference is this refinement would be in general strictly larger i.e. $R \xhookrightarrow{} \widetilde{R}$ and hence would have more summands. In fact, this map would not be a simplicial map because it would not commute with $d_0$.
\end{remark}

\begin{proposition}\label{prop: well-defined}
    The map $T_\mM$ is a well defined. 
\end{proposition}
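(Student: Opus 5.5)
The plan is to check that the formula for $T_\mM$ on a simple tensor $Z^0\otimes\cdots\otimes Z^q$ is independent of every choice made in \cref{const: T_M(C)}, and then that it respects the relations imposed in \cref{def: category of scissors correspondences}. There are three kinds of choices:
\begin{enumerate}
\item the common refinement $R$ of the cycle \eqref{eq: expanded q-simplex in MC};
\item the representative zigzag of each class $Z^m$;
\item the presentation of an element as a $\bbZ$-combination of simple tensors.
\end{enumerate}
The last is harmless: the formula is defined on generators and extended multilinearly, and the $\Hom$ groups are generated by classes of spans. So it suffices to prove invariance on generators together with compatibility with the two relations of \cref{def: category of scissors correspondences}.

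\emph{Step 1: invariance under refinement of $R$.} Suppose $R'\to R$ is a cover in $\wW(\CC_{hG})$, sending the pieces $R'_l$, $l\in \delta^{-1}(k)$, into $R_k$ with group elements $s_l$. Composing along the cycle gives $\gamma'^m(l)=\gamma^m(\delta(l))$ and $r'^m_l=r^m_{\delta(l)}s_l$. Hence the group-element factors $w^m_l=h^m_{\gamma'^m(l)}(g^m_{\gamma'^m(l)})^{-1}$ are exactly those of the piece $R_{\delta(l)}$. The measure term becomes $\mu(g^q r^q_k s_l R'_l)$. Since $\{s_lR'_l\to R_k\}$ is a cover and the action of $G$ preserves covers, additivity and $G$-equivariance of $\mu$ give
\[
\sum_{l\in\delta^{-1}(k)}\mu(g^q r^q_k s_l R'_l)=\mu(g^q r^q_k R_k).
\]
So the sum for $R'$ equals the sum for $R$. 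By axiom \textbf{(R)}, in the form of \cref{rem: Refinement in WC}, applied pieces by pieces, any two common refinements of the cycle have a further common refinement. Therefore the value does not depend on the choice of $R$.

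\emph{Step 2: invariance under change of representative.} Suppose $Z^m$ is represented by $Q^m$ and by $Q'^m$, which are equivalent through some $S$ with covers $S\to Q^m$ and $S\to Q'^m$ (\cref{def: unenriched scissors correspondences}). Take a common refinement $R$ of the cycle built with $S$ in place of $Q^m$. Then $R$ is simultaneously a common refinement of the cycle using $Q^m$ and of the cycle using $Q'^m$.

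We must check that the factor $w^m$ computed through $S$ agrees with the factor computed through $Q^m$. If a piece $S_j$ maps into $Q^m_i$ with group element $s$, commutativity of the equivalence diagram gives legs $(h s, g s)$ for $S_j$. Hence
\[
(hs)(gs)^{-1}=hg^{-1},
\]
and $w^m$ is unchanged. The measure term $\mu(g^q r^q_k R_k)$ is also unchanged, because it depends only on the composite group element pushing $R_k$ into $P^q$. Combined with Step 1, this gives invariance.

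\emph{Step 3: compatibility with the relations.} The zero span $P\hookleftarrow\emptyset\hookrightarrow Q$ in any slot forces $R=\emptyset$, so the sum is empty and equals $0$.

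For the splitting relation (3), replace a slot $T$ by the two pieces $S_{P\cap T}$ and $S_{Q\cap T}$. A common refinement $R$ of the cycle through $T$ splits as $R=R_P\sqcup R_Q$, where $R_P$ and $R_Q$ are common refinements of the two cycles through the respective pieces. The sum over $k\in K$ then splits accordingly, so the relation is respected.

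I expect Step 1 to be the main obstacle. The delicate point is that a refinement of a \emph{closed} cycle of partial covers must be shown to exist and to be unique up to further refinement. I would first open the cycle at $Z^0$, as in the remark following \cref{const: T_M(C)}, and reduce to the well-definedness of composition in $\mM(\CC_{hG})$ from \cref{rem: well defined}. I would then close the cycle with $Z^0$ by one more application of \cref{rem:completing cospan of partial covers}.
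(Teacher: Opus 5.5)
Your proposal is correct and follows essentially the same route as the paper's proof. The paper shows independence of the representative of each $Z^m$ through the same cancellation $(hs)(gs)^{-1}=hg^{-1}$ on a common refinement, and treats the composite group element pushing $R_k$ into $P^q$ the same way. It then uses additivity of $\mu$ to get invariance under passing to a finer $R$, and checks the zero and splitting relations via $R=\emptyset$ and the decomposition of $R$ into disjoint parts. Your Step 1 is slightly more explicit than the paper's one-line appeal to additivity, since you invoke the $G$-action on covers to apply additivity to the family $\{g^q r^q_k s_l R'_l\}$.
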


\begin{proof}
Verification of well definedness of $T_\mM$ requires us to show that it is independent of choice of representatives of zigzags $Z^m$ and is compatible with the additive relations imposed by the $\bbZ$-enrichment in $\bbZ\mM(\CC_{hG})$ (as in \cref{def: category of scissors correspondences}).

Suppose we choose an equivalent representative of $Z^m$ say
\[
P^{m-1}
\xhookleftarrow{\widetilde{\beta}^m,\; \widetilde{b}^m,\; \widetilde{h}^m}
\widetilde{Q}^m
\xhookrightarrow{\widetilde{\alpha}^m,\; \widetilde{a}^m,\; \widetilde{g}^m}
P^m.
\]

There exists a common refinement $\widehat{Q}^m$ which covers $Q^m$ and $\widetilde{Q}^m$ such that the following diagram commutes
% https://q.uiver.app/#q=WzAsNixbMiwxLCJQXm0iXSxbMSwxLCJcXHdpZGVoYXR7UX1ebSJdLFsxLDAsIlFebSJdLFsxLDIsIlxcd2lkZXRpbGRle1F9Xm0iXSxbMCwxLCJQXnttLTF9Il0sWzQsMSwiUiJdLFsyLDAsIlxcYWxwaGFebSwgYV5tLCBnXm0gIiwwLHsic3R5bGUiOnsidGFpbCI6eyJuYW1lIjoiaG9vayIsInNpZGUiOiJib3R0b20ifX19XSxbMywwLCJcXHdpZGV0aWxkZXtcXGFscGhhfV5tLCBcXHdpZGV0aWxkZXthfV5tLCBcXHdpZGV0aWxkZXtnfV5tIiwyLHsic3R5bGUiOnsidGFpbCI6eyJuYW1lIjoiaG9vayIsInNpZGUiOiJib3R0b20ifX19XSxbMSwyLCJcXGRlbHRhXm0sIGRebSwgbF5tICIsMV0sWzEsMywiXFx3aWRldGlsZGV7XFxkZWx0YX1ebSwgXFx3aWRldGlsZGV7ZH1ebSwgXFx3aWRldGlsZGV7bH1ebSIsMV0sWzIsNCwiXFxiZXRhXm0sIGJebSwgaF5tIiwyLHsic3R5bGUiOnsidGFpbCI6eyJuYW1lIjoiaG9vayIsInNpZGUiOiJib3R0b20ifX19XSxbMyw0LCJcXHdpZGV0aWxkZXtcXGJldGF9Xm0sIFxcd2lkZXRpbGRle2J9Xm0sIFxcd2lkZXRpbGRle2h9Xm0iLDAseyJzdHlsZSI6eyJ0YWlsIjp7Im5hbWUiOiJob29rIiwic2lkZSI6InRvcCJ9fX1dLFs1LDIsIlxcZ2FtbWFebSwgY15tLCByXm0iLDIseyJjdXJ2ZSI6Mywic3R5bGUiOnsidGFpbCI6eyJuYW1lIjoiaG9vayIsInNpZGUiOiJib3R0b20ifX19XSxbNSwzLCJcXHdpZGV0aWxkZXtcXGdhbW1hfV5tLCBcXHdpZGV0aWxkZXtjfV5tLCBcXHdpZGV0aWxkZXtyfV5tIiwwLHsiY3VydmUiOi0zLCJzdHlsZSI6eyJ0YWlsIjp7Im5hbWUiOiJob29rIiwic2lkZSI6ImJvdHRvbSJ9fX1dLFs1LDEsIlxcd2lkZWhhdHtcXGdhbW1hfV5xLCBcXHdpZGVoYXR7Y31ecSwgXFx3aWRlaGF0e3J9XnEiLDIseyJjdXJ2ZSI6Miwic3R5bGUiOnsidGFpbCI6eyJuYW1lIjoiaG9vayIsInNpZGUiOiJib3R0b20ifX19XV0=
\begin{equation}\label{diagram: well definedness of TM}
\begin{tikzcd}
	& {Q^m} &&& \\
	{P^{m-1}} & {\widehat{Q}^m} & {P^m} && R \\
	& {\widetilde{Q}^m}
	\arrow["{\beta^m, b^m, h^m}"', hook', from=1-2, to=2-1]
	\arrow["{\alpha^m, a^m, g^m }", hook', from=1-2, to=2-3]
	\arrow["{\delta^m, d^m, l^m }"{description}, from=2-2, to=1-2]
	\arrow["{\widetilde{\delta}^m, \widetilde{d}^m, \widetilde{l}^m}"{description}, from=2-2, to=3-2]
	\arrow["{\gamma^m, c^m, r^m}"', curve={height=18pt}, hook', from=2-5, to=1-2]
	\arrow["{\widehat{\gamma}^q, \widehat{c}^q, \widehat{r}^q}"', curve={height=12pt}, hook', from=2-5, to=2-2]
	\arrow["{\widetilde{\gamma}^m, \widetilde{c}^m, \widetilde{r}^m}", curve={height=-18pt}, hook', from=2-5, to=3-2]
	\arrow["{\widetilde{\beta}^m, \widetilde{b}^m, \widetilde{h}^m}", hook, from=3-2, to=2-1]
	\arrow["{\widetilde{\alpha}^m, \widetilde{a}^m, \widetilde{g}^m}"', hook', from=3-2, to=2-3]
\end{tikzcd}
\end{equation}
where $R$ remains (maybe after taking a finer cover) the overarching common refinement, and partially covers $Q^m$, $\widetilde{Q}^m$ and now $\widehat{Q}_m$. A piece $R_k$ of $R$ is pulled and pushed in this $m$-th level by two a priori different pair of group element: $h^m_{\gamma^q(k)}(g^m_{\gamma^q(k)})^{-1}$ and $\widetilde{h}^m_{\widetilde{\gamma}^q(k)}(\widetilde{g}^m_{\widetilde{\gamma}^q(k)})^{-1}$ for $w^m_k$. We can rewrite them using the commutative diagram above as
$$h^m_{\gamma^q(k)}(g^m_{\gamma^q(k)})^{-1} = h^m_{\delta^m\circ\widehat{\gamma}^q(k)}l^m_{\widehat{\gamma}^q(k)} (g^m_{\delta^m\circ\widehat{\gamma}^q(k)}l^m_{\widehat{\gamma}^q(k)})^{-1}= (h^m\circ l^m)_{\widehat{\gamma}(k)}(g^m \circ l^m)_{\widetilde{\gamma}(k)}^{-1}$$
and 
$$ \widetilde{h}^m_{\widetilde{\gamma}^q(k)}(\widetilde{g}^m_{\widetilde{\gamma}^q(k)})^{-1} = (\widetilde{h}^m \circ \widetilde{l}^m)_{\widehat{\gamma}(k)} (\widetilde{g}^m \circ \widetilde{l}^m)_{\widehat{\gamma}(k)}^{-1}$$
which are clearly equal as \eqref{diagram: well definedness of TM} commutes. In a similar fashion, it is easy to check that $$g^q_{\gamma^q(k)}r^q_k = \widetilde{g}^q_{\widetilde{\gamma}^q(k)}\widetilde{r}^q_k$$ and hence $$\mu((g^q_{\gamma^q(k)}r^q_k)R_k) = \mu((\widetilde{g}^q_{\widetilde{\gamma}^q(k)}\widetilde{r}^q_k)R_k).$$ As $\mu$ is additive for covering families the expression for $T_M$ remains same for a finer cover $R$, and as both the group trajectories and the measure remain same irrespective of the choice of representative of $Z^m$, $T_\mM$ is well defined.

We now check comparability with the $\bbZ$-enrichment. If any of the $Z^m$'s is the zero map $$P^{m-1}\xhookleftarrow{} \varnothing \xhookrightarrow{} P^{m},$$ then the common refinement $R=\varnothing$ and the sum in the expression of 
$$T_M(Z^0\otimes Z^1\otimes\cdots\otimes Z^{q-1}\otimes Z^q)$$ is over an empty set making the sum $0$, respecting the zero relation. Similarly if a zigzag $Z^m$ satisfies $Z^m = Z^m_1+Z^m_2$, the common refinement $R$ would split into disjoint components tracing through $Q^m_1$ and $Q^m_2$. Therefore we have
$$T_\mM(Z^0\otimes Z^1\otimes\cdots\otimes Z^{m}\otimes \cdots\otimes Z^q)= T_\mM(Z^0\otimes Z^1\otimes \cdots\otimes Z_1^m \otimes\cdots \otimes Z^q)+T_\mM(Z^0\otimes Z^1\otimes \cdots\otimes Z_2^m \otimes\cdots \otimes Z^q)$$ 
showing that $T\mM$ is compatible with both the relations.   
\end{proof}

\begin{proposition}\label{prop: simplicial}
    $T_\mM$ is a simplicial map.
\end{proposition}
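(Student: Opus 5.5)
To show that $T_\mM$ is simplicial, I would verify directly that it commutes with all face maps $d_j$ and degeneracy maps $s_j$. These are the structure maps of the Hochschild complex $HH(\bbZ\mM(\CC_{hG}))$ on one side and of the bar complex $B(\bbZ,\bbZ[G],A)$ on the other. By linearity and \cref{prop: well-defined}, it suffices to check the identities on a simple tensor $Z^0\otimes\cdots\otimes Z^q$ as in \eqref{eq: q-simplex in MC}. For each such tensor we use the common refinement $R$ of the whole cycle from \eqref{eq: expanded q-simplex in MC}. Recall the bar complex conventions. The face $d_0$ drops the first group element, since $\bbZ\otimes_{\bbZ[G]}-$ is the augmentation. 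The inner faces $d_j$ multiply $w^j\cdot w^{j+1}$. The last face $d_q$ lets $w^q$ act on the measure in $A$. The degeneracies insert $e\in G$.

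\emph{Degeneracies.} The degeneracy $s_j$ inserts the identity zigzag $P^j\xhookleftarrow{id}P^j\xhookrightarrow{id}P^j$. This leaves the common refinement $R$ unchanged and contributes the group element $w_k=e\cdot e^{-1}=e$ in the corresponding slot. That is exactly $s_j$ on the bar complex, so this case is immediate.

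\emph{Inner faces $1\le j\le q-1$.} The face $d_j$ replaces $Z^j\otimes Z^{j+1}$ by the composite $Z^j\circ Z^{j+1}$, computed via \cref{rem:completing cospan of partial covers}. The key observation is that the refinement $R$ of the original cycle still serves as a common refinement for the new cycle. The composite zigzag is refined by the completed cospan, and $R$ factors through it. By the additivity of $\mu$ and the invariance under finer refinements established in \cref{prop: well-defined}, we may compute using this $R$. For a piece $R_k$, the group element attached to the composite is the product of the pull/push elements along the two legs. In formulas, it is $h^j(g^j)^{-1}h^{j+1}(g^{j+1})^{-1}$, evaluated at the appropriate indices, which equals $w^j_k w^{j+1}_k$. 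This uses the composition rule $(f_1,g_1)\circ(f_0,g_0)=(f_1\circ g_1f_0,g_1g_0)$ in $\CC_{hG}$ together with commutativity of the refinement diagram. The measure term is unaffected, so we obtain $d_j$ of the bar complex.

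\emph{Outer faces.} Here the cyclic structure of the Hochschild complex matters, and I expect this to be the main obstacle. The face $d_0$ composes $Z^0\circ Z^1$. Since $w^0$ never appears in the formula, the new first slot element $w^0_kw^1_k$ must be matched against the bar complex $d_0$, which simply discards $w^1$. The plan is to check that both sides agree termwise once we use the cycle refinement $R$. This is exactly the point flagged in \cref{rem: TM depends on cyclicity}: with $R$ built from the whole cycle, no extra summands appear. The face $d_q$ composes $Z^q\circ Z^0$, which changes the endpoint object from $P^q$ to $P^{q-1}$. Here I would use $G$-equivariance of $\mu$, namely $\mu(gX)=g\mu(X)$. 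The measure $\mu((g^{q}r^q_k)R_k)$ pushed into $P^q$ must be converted into the measure of $R_k$ pushed into $P^{q-1}$. The identity needed is
$$w^q_k\cdot\mu\bigl((g^q_{\gamma^q(k)}r^q_k)R_k\bigr)=\mu\bigl((h^q_{\gamma^q(k)}r^q_k)R_k\bigr),$$
which follows from equivariance and $w^q_k=h^q(g^q)^{-1}$. This should be precisely the bar complex $d_q$. The delicate part is the index bookkeeping: one has to check that the group element $r^q_k$ used for the refinement map of $R$ into $Q^q$ is compatible with the new composite zigzag. I would handle this by again choosing the refinement of the composite to factor through $R$, and by appealing to the independence of representatives shown in \cref{prop: well-defined}.
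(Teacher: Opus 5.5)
Your proposal is essentially the paper's proof. Degeneracies contribute $e$. The inner faces use the commuting refinement diamond over $P^m$ to identify the composite's group element with $w^m_kw^{m+1}_k$. For $d_0$, the composite lands in the unrecorded zeroth slot while the bar-complex $d_0$ drops $w^1_k$. For $d_q$, you use $G$-equivariance of $\mu$.

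The bookkeeping you leave open at $d_q$ is settled in the paper by the commuting square of $R$ over $P^{q-1}$. That square gives $h^q_{\gamma^q(k)}r^q_k=g^{q-1}_{\gamma^{q-1}(k)}r^{q-1}_k$, which identifies $\mu\bigl((h^q_{\gamma^q(k)}r^q_k)R_k\bigr)$ with the measure term $\mu\bigl((g^{q-1}_{\gamma^{q-1}(k)}r^{q-1}_k)R_k\bigr)$ of $T_\mM\circ d_q$. As in the paper, your claim that $R$ is unchanged by $d_q$ requires $R$ to be the refinement on which the whole cycle commutes, including the two legs into $P^q$. If $R$ is only the domain of the loop opened at $P^q$, the indexing set changes under $d_q$.
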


\begin{proof}
First let us recall the face and degeneracy maps of the Hochschild complex as well as the two sided bar construction. 

The face and degeneracy maps at $q$-th level  $N^\otimes_q(\bbZ, \bbZ[G], A)$ are
\begin{align*}
    d_0(g_1\otimes g_2 \otimes \cdots  \otimes g_q \otimes a) & = g_2\otimes \cdots \otimes g_q \otimes a\\
    d_m(g_1\otimes \cdots \otimes g_m\otimes g_{m+1}\otimes \cdots\otimes g_q\otimes a) & = g_1\otimes\cdots \otimes (g_mg_{m+1}) \otimes \cdots \otimes g_q \otimes a\\
    &\quad \quad \text{for $0< m < q$}\\
    d_q(g_1\otimes \cdots \otimes g_{q-1} \otimes g_q \otimes a) & = g_1 \otimes \cdots \otimes g_{q-1} \otimes (g_q\cdot a)\\
    s_{m}(g_1\otimes \cdots \otimes g_q \otimes a) & = g_1\otimes \cdots g_m \otimes e \otimes g_{m+1} \otimes \cdots \otimes g_q \otimes a\\
    & \quad \quad \text{ for $0\leq m \leq q$}
\end{align*}

Here $d_0$ absorbs the $g_1$ owing to the right module action on $\bbZ$ being trivial. The face and degeneracy map on a simple tensor in $HH_{q}(\bbZ\mM(\CC_{hG}))$ is given by 
\begin{align*}
    d_m(Z^0\otimes \cdots \otimes Z^m \otimes Z^{m+1} \otimes \cdots \otimes Z^q) & = Z^0 \otimes \cdots \otimes (Z^m\circ Z^{m+1}) \otimes \cdots \otimes Z^q\\
    & \quad \quad \text{for $0\leq m < q$ }\\
    d_q(Z^0 \otimes Z^1 \otimes \cdots \otimes Z^{q-1} \otimes Z^q) & = (Z^q\circ Z^0)\otimes Z^1 \otimes \cdots \otimes Z^{q-1}\\
    s_m(Z^0 \otimes \cdots \otimes Z^q) & = Z^0 \otimes \cdots Z^m \otimes \operatorname{id}_{P^m} \otimes Z^{m+1} \otimes \cdots \otimes Z^q\\
    & \quad \quad \text{for $0\leq m \leq q$}
\end{align*}
It is easy to check that $T_\mM$ commutes with the degeneracies $s_m$ for $0\leq m \leq q$ since introducing $\operatorname{id}_{P^m}$ amount to introducing an extra group element $e$ by $T_\mM$. We now check that it commutes with the internal face maps $d_m$ for $0 < m < q$. Let $Z^0 \otimes \cdots \otimes Z^q$ be a generating $q$-simplex. The face map $d_m$ algebraically composes the zig-zags $Z^m$ and $Z^{m+1}$ by taking their common refinement over $P^m$, as illustrated in the following commutative diagram in $\mM(\CC_{hG})$,
% https://q.uiver.app/#q=WzAsNyxbMCwwLCJQXnttLTF9Il0sWzIsMCwiUF57bX0iXSxbNCwwLCJQXnttKzF9Il0sWzEsMSwiUV5tIl0sWzMsMSwiUV57bSsxfSJdLFsyLDIsIlEiXSxbMiwzLCJSIl0sWzYsNSwiKFxcZ2FtbWEsIGMsIHIpIiwxLHsic3R5bGUiOnsidGFpbCI6eyJuYW1lIjoiaG9vayIsInNpZGUiOiJib3R0b20ifX19XSxbNCwxLCIoXFxiZXRhXnttKzF9LGJee20rMX0saF57bSsxfSkiLDEseyJzdHlsZSI6eyJ0YWlsIjp7Im5hbWUiOiJob29rIiwic2lkZSI6ImJvdHRvbSJ9fX1dLFs0LDIsIihcXGFscGhhXnttKzF9LGFee20rMX0sZ157bSsxfSkiLDIseyJzdHlsZSI6eyJ0YWlsIjp7Im5hbWUiOiJob29rIiwic2lkZSI6InRvcCJ9fX1dLFszLDEsIihcXGFscGhhXm0sIGFebSxnXm0pIiwwLHsic3R5bGUiOnsidGFpbCI6eyJuYW1lIjoiaG9vayIsInNpZGUiOiJ0b3AifX19XSxbMywwLCIoXFxiZXRhXm0sYl5tLGhebSkiLDAseyJzdHlsZSI6eyJ0YWlsIjp7Im5hbWUiOiJob29rIiwic2lkZSI6ImJvdHRvbSJ9fX1dLFs2LDMsIiIsMix7ImN1cnZlIjotMywic3R5bGUiOnsidGFpbCI6eyJuYW1lIjoiaG9vayIsInNpZGUiOiJib3R0b20ifSwiYm9keSI6eyJuYW1lIjoiZG90dGVkIn19fV0sWzYsNCwiIiwwLHsiY3VydmUiOjMsInN0eWxlIjp7InRhaWwiOnsibmFtZSI6Imhvb2siLCJzaWRlIjoidG9wIn0sImJvZHkiOnsibmFtZSI6ImRvdHRlZCJ9fX1dLFs1LDQsIihcXGFscGhhLCBhLCBnKSIsMix7InN0eWxlIjp7InRhaWwiOnsibmFtZSI6Imhvb2siLCJzaWRlIjoidG9wIn19fV0sWzUsMywiKFxcYmV0YSwgYiwgaCkiLDAseyJzdHlsZSI6eyJ0YWlsIjp7Im5hbWUiOiJob29rIiwic2lkZSI6ImJvdHRvbSJ9fX1dXQ==
\[\begin{tikzcd}
	{P^{m-1}} && {P^{m}} && {P^{m+1}} \\
	& {Q^m} && {Q^{m+1}} \\
	&& Q \\
	&& R
	\arrow["{(\beta^m,b^m,h^m)}", hook', from=2-2, to=1-1]
	\arrow["{(\alpha^m, a^m,g^m)}", hook, from=2-2, to=1-3]
	\arrow["{(\beta^{m+1},b^{m+1},h^{m+1})}"{description}, hook', from=2-4, to=1-3]
	\arrow["{(\alpha^{m+1},a^{m+1},g^{m+1})}"', hook, from=2-4, to=1-5]
	\arrow["{(\beta, b, h)}", hook', from=3-3, to=2-2]
	\arrow["{(\alpha, a, g)}"', hook, from=3-3, to=2-4]
	\arrow[curve={height=-18pt}, dotted, hook', from=4-3, to=2-2]
	\arrow[curve={height=18pt}, dotted, hook, from=4-3, to=2-4]
	\arrow["{(\gamma, c, r)}"{description}, hook', from=4-3, to=3-3]
\end{tikzcd}\]
Because refinement is associative, $R$ remains the overarching refinement for the entire composed sequence of zig-zags. Applying $T_\mM \circ d_m$ to the $q$-simplex evaluates the sequence with $Z^m \circ Z^{m+1}$ merged. For a given piece $k \in K$, the new accrued group element at this composed step is formed by the outer legs of the diamond $Q$. Thus, we have,
\begin{align*}
    & T_\mM\circ d_m (Z^0\otimes \cdots \otimes Z^m \otimes Z^{m+1} \otimes \cdots \otimes Z^q) \\
    =& T_\mM(Z^0 \otimes \cdots \otimes (Z^m\circ Z^{m+1})\otimes \cdots \otimes Z^q)\\
    =& \sum_{k\in K}
    w^1_k
    \otimes
    \cdots
    \otimes 
    w^{m-1}_k \otimes w^m\otimes w^{m+2}_k
    \otimes
    \cdots
    \otimes
    w^q_k\otimes 
    \mu\!\left((g^q_{\gamma^q(k)} \circ r_k) R_k\right)
\end{align*}
Here $w=\Big( h^{m}_{\beta\circ \gamma(k)}h_{\gamma(k)} (g^{m+1}_{\alpha\circ \gamma (k)}g_{\gamma(k)})^{-1} \Big)$. On the other hand, in the target two-sided bar complex, $d_m \circ T_\mM$ acts by multiplying the $m$-th and $(m+1)$-th group elements in the tensor product. For a fixed $k$, this product is
\[ w^m_k w^{m+1}_k= \Big( h^m_{\gamma^m(k)}(g^m_{\gamma^m(k)})^{-1} \Big) \Big( h^{m+1}_{\gamma^{m+1}(k)}(g^{m+1}_{\gamma^{m+1}(k)})^{-1} \Big). \]
To see that these two expressions $w$ and $w^m_k w^{m+1}_k$ are identical, we use the commutativity of the refinement diamond over $P^m$. The left and right paths from $Q$ to $P^m$ must accrue the same group elements, giving the relation
\[ g^{m}_{\beta\circ \gamma (k)} h_{\gamma(k)} = h^{m+1}_{\alpha\circ \gamma(k)} g_{\gamma(k)}. \]
Rearranging this by right-multiplying by $g_{\gamma(k)}^{-1}$ and left-multiplying by $(g^{m}_{\beta\circ \gamma (k)})^{-1}$ yields
\[ h_{\gamma(k)} g_{\gamma(k)}^{-1} = (g^{m}_{\beta\circ \gamma (k)})^{-1}  h^{m+1}_{\alpha\circ \gamma(k)}. \]
Substituting this identity into the middle factor of our $T_\mM \circ d_m$ expression gives:
\begin{align*}
    h^{m}_{\beta\circ \gamma(k)}h_{\gamma(k)} (g^{m+1}_{\alpha\circ \gamma (k)}g_{\gamma(k)})^{-1}
    &= h^{m}_{\beta\circ \gamma(k)} \Big[ h_{\gamma(k)} g_{\gamma(k)}^{-1} \Big] (g^{m+1}_{\alpha\circ \gamma (k)})^{-1}\\
    &= h^{m}_{\beta\circ \gamma(k)} \Big[ (g^{m}_{\beta\circ \gamma (k)})^{-1}  h^{m+1}_{\alpha\circ \gamma(k)} \Big] (g^{m+1}_{\alpha\circ \gamma (k)})^{-1}\\
    &= \Big( h^{m}_{\gamma^{m}(k)}(g^{m}_{\gamma^{m}(k)})^{-1} \Big) \Big( h^{m+1}_{\gamma^{m+1}(k)}(g^{m+1}_{\gamma^{m+1}(k)})^{-1} \Big)
\end{align*}
Hence, $T_\mM \circ d_m = d_m \circ T_\mM$ for $0< m < q$. %%%%%

For $d_0$, the Hochschild face map composes the $0$-th and $1$-st zig-zags, $(Z^0 \circ Z^1) \otimes Z^2 \otimes \cdots \otimes Z^q$. The common refinement of the new $q-1$ cycle $(Z^0 \circ Z^1) \otimes Z^2 \otimes \cdots \otimes Z^q)$ is same as that of $(Z^0\otimes Z^1 \otimes \cdots \otimes Z^q)$. So, 
\begin{align*}
    T_\mM\circ d_0 (Z^0\otimes Z^1 \otimes \cdots \otimes Z^q)
    = \sum_{k\in K}
    w^2_k
    \otimes
    \cdots
    \otimes
    w^q_k
    \otimes
    \mu\!\left((g^q_{\gamma^q(k)} r^q_k) R_k\right).
\end{align*}
On the other hand, $d_0$ in the two-sided bar complex absorbs the first group element due to the trivial right module action on $\bbZ$. Thus, it drops $w^1_k = h^1_{\gamma^1(k)}(g^1_{\gamma^1(k)})^{-1}$ entirely,
\begin{align*}
    & d_0 \circ T_\mM (Z^0\otimes Z^1 \otimes \cdots \otimes Z^q) \\
    =& d_0 \left( \sum_{k\in K} w^1_k \otimes w^2_k \otimes \cdots \otimes w^q_k \otimes \mu\!\left((g^q_{\gamma^q(k)} r^q_k) R_k\right) \right) \\
    =& \sum_{k\in K} w^2_k \otimes \cdots \otimes w^q_k \otimes \mu\!\left((g^q_{\gamma^q(k)} r^q_k) R_k\right)
\end{align*}
Because the overarching refinement $R$ remains same, the two expressions are identical, and $T_\mM \circ d_0 = d_0 \circ T_\mM$.

Finally, we check the cyclic face map $d_q$. In $HH_q(\bbZ\mM(\CC_{hG}))$, $d_q$ wraps around and composes $Z^q$ with $Z^0$. The common refinement is still $R$. So,
\begin{align*}
    & T_\mM\circ d_q (Z^0\otimes Z^1 \otimes \cdots \otimes Z^q) \\
    =& T_\mM((Z^q \circ Z^0) \otimes Z^1 \otimes \cdots \otimes Z^{q-1})\\
    =& \sum_{k\in K}
    w^1_k
    \otimes
    \cdots
    \otimes
    w^{q-1}_k
    \otimes
    \mu\!\left((g^{q-1}_{\gamma^{q-1}(k)} r^{q-1}_k) R_k\right)
\end{align*}
In the two-sided bar complex, $d_q$ acts by letting the last group element act on the module $A$. Thus,
\begin{align*}
    & d_q \circ T_\mM (Z^0\otimes Z^1 \otimes \cdots \otimes Z^q) \\
    =& \sum_{k\in K} w^1_k \otimes \cdots \otimes w^{q-1}_k \otimes \Big( w^q_k \cdot \mu\!\left((g^q_{\gamma^q(k)} r^q_k) R_k\right) \Big)
\end{align*}
To see that these are equal, recall that $w^q_k = h^q_{\gamma^q(k)}(g^q_{\gamma^q(k)})^{-1}$. Because the measure $\mu$ is $G$-equivariant, we can absorb the action of $w^q_k$ inside the measure,
\begin{align*}
    w^q_k \cdot \mu\!\left((g^q_{\gamma^q(k)} r^q_k) R_k\right) 
    &= \mu\!\left( h^q_{\gamma^q(k)}(g^q_{\gamma^q(k)})^{-1} (g^q_{\gamma^q(k)} r^q_k) R_k \right) \\
    &= \mu\!\left( (h^q_{\gamma^q(k)} r^q_k) R_k \right)
\end{align*}
These two measures are equal because the following diagram commutes and hence the group acting on $R_k$ are the same
% https://q.uiver.app/#q=WzAsNixbMCwwLCJcXHRleHRjb2xvcntyZWR9e1J9Il0sWzIsMCwiXFx0ZXh0Y29sb3J7Ymx1ZX17UV57cS0xfX0iXSxbMCwyLCJcXHRleHRjb2xvcntibHVlfXtRXntxfX0iXSxbMiwyLCJQXntxLTF9Il0sWzMsMSwiXFx0ZXh0Y29sb3J7cmVkfXtSX2t9Il0sWzUsMSwiUF57cS0xfSJdLFswLDIsIihcXGdhbW1hXnEsIGNecSxyXnEpIiwyLHsiY29sb3VyIjpbMCw2MCw2MF0sInN0eWxlIjp7InRhaWwiOnsibmFtZSI6Imhvb2siLCJzaWRlIjoidG9wIn19fSxbMCw2MCw2MCwxXV0sWzIsMywiKFxcYmV0YV5xLGJecSxoXnEpIiwyLHsiY29sb3VyIjpbMjQ1LDg3LDYwXSwic3R5bGUiOnsidGFpbCI6eyJuYW1lIjoiaG9vayIsInNpZGUiOiJ0b3AifX19LFsyNDUsODcsNjAsMV1dLFswLDEsIihcXGdhbW1hXntxLTF9LCBjXntxLTF9LHJee3EtMX0pIiwwLHsiY29sb3VyIjpbMCw2MCw2MF0sInN0eWxlIjp7InRhaWwiOnsibmFtZSI6Imhvb2siLCJzaWRlIjoidG9wIn19fSxbMCw2MCw2MCwxXV0sWzEsMywiKFxcYWxwaGFee3EtMX0sYV57cS0xfSxnXntxLTF9KSIsMix7ImNvbG91ciI6WzI0NSw4Nyw2MF0sInN0eWxlIjp7InRhaWwiOnsibmFtZSI6Imhvb2siLCJzaWRlIjoidG9wIn19fSxbMjQ1LDg3LDYwLDFdXSxbNCw1LCIoYV57cS0xfV97XFxnYW1tYV57cS0xfShrKX1cXGNpcmMgZ157cS0xfV97XFxnYW1tYV57cS0xfShrKX1hXntxLTF9X2ssZ157cS0xfV97XFxnYW1tYV57cS0xfShrKX0gcl57cS0xfV9rKSIsMCx7ImN1cnZlIjotM31dLFs0LDUsIihiXnFfe1xcZ2FtbWFecShrKX1cXGNpcmMgaF5xX3tcXGdhbW1hXnEoayl9Y15xX3trfSxoXnFfe1xcZ2FtbWFecShrKX0gcl5xX2spIiwyLHsiY3VydmUiOjN9XV0=
\[\begin{tikzcd}
	{\textcolor{red}{R}} && {\textcolor{blue}{Q^{q-1}}} &&& \\
	&&& {\textcolor{red}{R_k}} && {P^{q-1}} \\
	{\textcolor{blue}{Q^{q}}} && {P^{q-1}}
	\arrow["{(\gamma^{q-1}, c^{q-1},r^{q-1})}", color={rgb,255:red,214;green,92;blue,92}, hook, from=1-1, to=1-3]
	\arrow["{(\gamma^q, c^q,r^q)}"', color={rgb,255:red,214;green,92;blue,92}, hook, from=1-1, to=3-1]
	\arrow["{(\alpha^{q-1},a^{q-1},g^{q-1})}"', color={rgb,255:red,79;green,64;blue,242}, hook, from=1-3, to=3-3]
	\arrow["{(a^{q-1}_{\gamma^{q-1}(k)}\circ g^{q-1}_{\gamma^{q-1}(k)}a^{q-1}_k,g^{q-1}_{\gamma^{q-1}(k)} r^{q-1}_k)}", curve={height=-18pt}, from=2-4, to=2-6]
	\arrow["{(b^q_{\gamma^q(k)}\circ h^q_{\gamma^q(k)}c^q_{k},h^q_{\gamma^q(k)} r^q_k)}"', curve={height=18pt}, from=2-4, to=2-6]
	\arrow["{(\beta^q,b^q,h^q)}"', color={rgb,255:red,79;green,64;blue,242}, hook, from=3-1, to=3-3]
\end{tikzcd}.\]    
\end{proof}

Thus we have a simplicial map from the Hochschild complex associated to $\CC$ to the group homology complex associated to $\CC$. 
\begin{remark}
  As pointed out in \cref{rem: TM depends on cyclicity}, this map does depend on the cyclic nature of Hochschild complex. The composite $T_\mM\circ DTr$ is thus a trace invariant of $\CC$, giving us trace invariants for $\CC$ associated to each measure $A$ that exists on $\CC$.   
\end{remark}
Now, $T_\mM$ induces a homomorphism
\begin{equation}\label{eqn: tr_M}
    \operatorname{tr}_\mM: HH_n(\bbZ\mM(\CC_{hG})) \to H_n(G;A).
\end{equation}
We now show the factoring of the regulator map via this $\operatorname{tr}_\mM$:

\begin{theorem}\label{thm: regulator factors through Dennis trace}
    The regulator map of \cite{bohmann2024trace} factors through the Dennis trace map, i.e. we have a commutative diagram
    \[\begin{tikzcd}
	{K_n(\CC_{hG})} && {HH_n(\bbZ\mM(\CC_{hG}))} \\
	& {H_n(G; A)}
	\arrow["Dtr_n", from=1-1, to=1-3]
	\arrow["{\operatorname{tr}}"', from=1-1, to=2-2]
	\arrow["{\operatorname{tr}_\mM}", from=1-3, to=2-2]
\end{tikzcd}\]
\end{theorem}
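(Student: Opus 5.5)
The plan is to reduce the statement to a comparison of simplicial maps before group completion, and then check that comparison level by level on simplices. The Dennis trace (\cref{const: Dennis trace map}) is obtained by group completing the composite $I\circ cyc: B\gG(\CC_{hG})\to HH(\bbZ\mM(\CC_{hG}))$. The regulator $\operatorname{tr}$ (\cref{const: regulator map}) is obtained by group completing $T_\wW: B\wW(\CC_{hG})\to B(\bbZ,\bbZ[G],A)$, using $B\wW(\CC_{hG})\simeq B\gG(\CC_{hG})$ via the localization functor $L_W$ of \eqref{eq:W(C)_to_G(C)}. Since the target $B(\bbZ,\bbZ[G],A)$ is a simplicial abelian group, group completion is harmless there. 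It therefore suffices to show that the square
\[
T_\mM\circ I\circ cyc\circ B(L_W) \;=\; T_\wW \colon B\wW(\CC_{hG})\to B(\bbZ,\bbZ[G],A)
\]
commutes, at least up to simplicial homotopy. All maps involved are monoidal (additive) with respect to concatenation and $\oplus$, so the commutation passes to group completions and hence to $\pi_n$. This yields $\operatorname{tr}=\operatorname{tr}_\mM\circ DTr_n$. One caveat: $DTr_n$ is stated with target $HH_n(\bbZ\mM)$, i.e.\ before the Morita step (3) of \cref{const: Dennis trace map}, so that step plays no role here.

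The key computation is carried out on a $q$-simplex $P^0\xleftarrow{(\beta^1,b^1,h^1)}P^1\leftarrow\cdots\xleftarrow{(\beta^q,b^q,h^q)}P^q$ of $B\wW(\CC_{hG})$.

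First, $B(L_W)$ sends each cover to the span $P^{m-1}\xleftarrow{}P^m\xrightarrow{=}P^m$ written in the opposite orientation. Concretely, in the notation of \cref{const: T_M(C)}, $Z^m$ is the zigzag with $Q^m=P^m$, left leg the cover $(\beta^m,b^m,h^m)$, and right leg the identity, so $g^m=e$.

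Second, $cyc$ prepends $Z^0=(Z^1\cdots Z^q)^{-1}$. By \cref{rmk: inverse in GC}, this is the composite zigzag read backwards, with apex refining to $P^q$.

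Third, we compute the common refinement $R$ of the cycle. All legs are genuine covers pointing toward smaller indices, and $Z^0$ has apex $P^q$ mapping by the identity to $P^q$. Hence $R$ can be taken to be $P^q$ itself, with $\gamma^m$ equal to the composite index map $i\mapsto i_m$ and $r^q_k=e$.

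Plugging into the formula of \cref{const: T_M(C)}, we get $w^m_k=h^m_{i_m}\cdot e^{-1}=h^m_{i_m}$ and $\mu((g^q r^q_k)R_k)=\mu(P^q_i)$. This reproduces exactly $\sum_{i\in I_q}h^1_{i_1}\otimes\cdots\otimes h^q_{i_q}\otimes\mu(P^q_i)$, which is $T_\wW$ of the simplex. I should double-check that the orientation convention in \cref{const: regulator map} agrees with the reversed ordering used in $HH$, where the morphisms point $P^{m-1}\leftarrow P^m$. If the conventions instead produce $h^{-1}$'s, I will insert the inverse-map automorphism of $B(\bbZ,\bbZ[G],A)$, or reverse the order of $B\wW$ before comparing.

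I expect the main obstacle to be the identification of the common refinement $R$ of the cycle that contains $Z^0=(Z^1\circ\cdots\circ Z^q)^{-1}$. Specifically, one must check that no extra pieces appear and that the group elements $r^m_k$ and $c^m_k$ are trivial, or at least cancel in $w^m_k$. The well-definedness argument of \cref{prop: well-defined} shows that the answer is independent of the refinement chosen. I will therefore choose $R=P^q$ explicitly and verify that it partially covers every $Q^m$ compatibly, as in \eqref{eq: expanded q-simplex in MC}. A secondary point is the equivalence $B\wW\simeq B\gG$: $T_\wW$ is defined on $B\wW$, and the commutation above holds strictly on $B\wW$, so it suffices to precompose with $L_W$, which is a homotopy equivalence inducing the identification used in \cref{def:Assemblers_K_theory}.
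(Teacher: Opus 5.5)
Your proposal is correct and follows the paper's proof. You precompose with $L_W$ so each cover becomes $P^{m-1}\leftarrow P^m\xrightarrow{=}P^m$, let $cyc$ close the cycle with the reversed composite, take $P^q$ itself as the common refinement, and read off $w^m_k=h^m_{\gamma^m(k)}$ and $\mu(P^q_k)$, which is exactly $T_\wW$; since all right legs are identities ($g^m=e$, $r^q_k=e$), the factors $(g^m_{\gamma^m(k)})^{-1}$ are trivial and your orientation hedge is unnecessary.

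One small correction to your group-completion remark, which the paper does not treat carefully either: $I\circ cyc$ is additive for concatenation only up to homotopy, via the biproduct structure of $\bbZ\mM(\CC_{hG})$, not strictly.
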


\begin{proof}
Let $P^0 \xleftarrow{\beta^1,\,b^1,\,h^1} P^1 \xleftarrow{\beta^2,\,b^2,\,h^2} \cdots \xleftarrow{\beta^q,\,b^q,\,h^q} P^q$ be a $q$-simplex in $B\wW(\CC_{hG})$. At simplicial level, $Dtr$ maps this to the following simplex of $\gG(\CC_{hG})$,
% https://q.uiver.app/#q=WzAsOCxbNiwwLCJQXnEiXSxbMiwwLCJQXjEiXSxbMCwwLCJQXjAiXSxbNSwxLCJcXHRleHRjb2xvcntibHVlfXtQXnF9Il0sWzEsMSwiXFx0ZXh0Y29sb3J7Ymx1ZX17UF4xfSJdLFs0LDAsIlxcY2RvdHMiXSxbMywxLCJcXHRleHRjb2xvcntibHVlfXtQXjJ9Il0sWzMsMiwiXFx0ZXh0Y29sb3J7cmVkfXtQXnF9Il0sWzMsMCwiPSIsMSx7ImNvbG91ciI6WzIyOCwxMDAsNjBdfSxbMjI4LDEwMCw2MCwxXV0sWzQsMSwiPSIsMSx7ImNvbG91ciI6WzIyOCwxMDAsNjBdfSxbMjI4LDEwMCw2MCwxXV0sWzQsMiwiXFxiZXRhXjEsXFwsYl4xLFxcLGheMSIsMSx7ImNvbG91ciI6WzIyOCwxMDAsNjBdfSxbMjI4LDEwMCw2MCwxXV0sWzYsMSwiXFxiZXRhXjIsXFwsYl4yLFxcLGheMiIsMSx7ImNvbG91ciI6WzIyOCwxMDAsNjBdfSxbMjI4LDEwMCw2MCwxXV0sWzYsNSwiPSIsMSx7ImNvbG91ciI6WzIyOCwxMDAsNjBdfSxbMjI4LDEwMCw2MCwxXV0sWzMsNSwiXFxiZXRhXnEsXFwsYl5xLFxcLGhecSIsMSx7ImNvbG91ciI6WzIyOCwxMDAsNjBdfSxbMjI4LDEwMCw2MCwxXV0sWzcsNCwiXFxnYW1tYV4xLGNeMSxyXjEiLDEseyJjb2xvdXIiOlswLDYwLDYwXX0sWzAsNjAsNjAsMV1dLFs3LDYsIlxcZ2FtbWFeMixjXjIscl4yIiwxLHsiY29sb3VyIjpbMCw2MCw2MF19LFswLDYwLDYwLDFdXSxbNywzLCI9IiwxLHsiY29sb3VyIjpbMCw2MCw2MF19LFswLDYwLDYwLDFdXV0=
\[\begin{tikzcd}
	{P^0} && {P^1} && \cdots && {P^q} \\
	& {\textcolor{blue}{P^1}} && {\textcolor{blue}{P^2}} && {\textcolor{blue}{P^q}} \\
	&&& {\textcolor{red}{P^q}}
	\arrow["{\beta^1,\,b^1,\,h^1}"{description}, color={rgb,255:red,51;green,92;blue,255}, from=2-2, to=1-1]
	\arrow["{=}"{description}, color={rgb,255:red,51;green,92;blue,255}, from=2-2, to=1-3]
	\arrow["{\beta^2,\,b^2,\,h^2}"{description}, color={rgb,255:red,51;green,92;blue,255}, from=2-4, to=1-3]
	\arrow["{=}"{description}, color={rgb,255:red,51;green,92;blue,255}, from=2-4, to=1-5]
	\arrow["{\beta^q,\,b^q,\,h^q}"{description}, color={rgb,255:red,51;green,92;blue,255}, from=2-6, to=1-5]
	\arrow["{=}"{description}, color={rgb,255:red,51;green,92;blue,255}, from=2-6, to=1-7]
	\arrow["{\gamma^1,c^1,r^1}"{description}, color={rgb,255:red,214;green,92;blue,92}, from=3-4, to=2-2]
	\arrow["{\gamma^2,c^2,r^2}"{description}, color={rgb,255:red,214;green,92;blue,92}, from=3-4, to=2-4]
	\arrow["{=}"{description}, color={rgb,255:red,214;green,92;blue,92}, from=3-4, to=2-6]
\end{tikzcd}\]
As $P^q$ is the finest cover in this chain, it acts as the common refinement and the map $(\gamma^m,c^m,r^m)$ to $P^m$ is just composition of the covering maps in the chain that occur before $P^m$. The $cyc$ map in \cref{const: Dennis trace map} composes the chain and attaches its inverse to complete the cyclic chain. By \cref{rmk: inverse in GC}, it amounts to completing the cycle by flipping the composition zigzag. Let $(\gamma^0, c^0, r^0)$ be the composition of all the map in the chain in line with the previous notation. Then $cyc$ evaluates the above chain to  
\[\begin{tikzcd}
	& {P^1} && {P^2} & \\
	{P^0} & {\textcolor{blue}{P^1}} & {\textcolor{blue}{P^2}} & {\textcolor{blue}{P^3}} & \vdots \\
	& {\textcolor{blue}{P^q}} & {\textcolor{red}{P^q}} \\
	{P^q} & {\textcolor{blue}{P^q}} && {\textcolor{blue}{P^m}} & {P^{m-1}} \\
	& \cdots && {P^m}
	\arrow["{=}"{description}, color={rgb,255:red,51;green,92;blue,255}, from=2-2, to=1-2]
	\arrow["{\beta^1, b^1, h^1}"', color={rgb,255:red,51;green,92;blue,255}, from=2-2, to=2-1]
	\arrow["{\beta^2, b^2, h^2}"', color={rgb,255:red,51;green,92;blue,255}, from=2-3, to=1-2]
	\arrow["{=}"{description}, color={rgb,255:red,51;green,92;blue,255}, from=2-3, to=1-4]
	\arrow[color={rgb,255:red,51;green,92;blue,255}, from=2-4, to=1-4]
	\arrow["{=}"{description}, color={rgb,255:red,51;green,92;blue,255}, from=2-4, to=2-5]
	\arrow["{\gamma^0, c^0, r^0}", color={rgb,255:red,51;green,92;blue,255}, from=3-2, to=2-1]
	\arrow["{=}"{description}, color={rgb,255:red,51;green,92;blue,255}, from=3-2, to=4-1]
	\arrow[color={rgb,255:red,214;green,92;blue,92}, from=3-3, to=2-2]
	\arrow[color={rgb,255:red,214;green,92;blue,92}, from=3-3, to=2-3]
	\arrow[color={rgb,255:red,214;green,92;blue,92}, from=3-3, to=2-4]
	\arrow["{=}"{description}, color={rgb,255:red,214;green,92;blue,92}, from=3-3, to=3-2]
	\arrow["{=}"{description}, color={rgb,255:red,214;green,92;blue,92}, from=3-3, to=4-2]
	\arrow["{\gamma^m, c^m, r^m}"{description}, color={rgb,255:red,214;green,92;blue,92}, from=3-3, to=4-4]
	\arrow["{=}"{description}, color={rgb,255:red,51;green,92;blue,255}, from=4-2, to=4-1]
	\arrow["{=}"{description}, color={rgb,255:red,51;green,92;blue,255}, from=4-2, to=5-2]
	\arrow["{\beta^m, b^m , h^m}", color={rgb,255:red,51;green,92;blue,255}, from=4-4, to=4-5]
	\arrow["{=}"{description}, color={rgb,255:red,51;green,92;blue,255}, from=4-4, to=5-4]
\end{tikzcd}\]
This is exactly the simple tensor that lands in $HH(\bbZ\mM(\CC_{hG}))$ and $T_\mM$ evaluates it to
$$\sum_{k\in I_q} h^1_{\gamma^1(k)}e^{-1} \otimes h^2_{\gamma^2(k)}e^{-1} \otimes \cdots \otimes h^q_{\gamma^q(k)}e^{-1} \otimes \mu (e\cdot P^q_i) = \sum_{i\in I^q}h^1_{\gamma^1(k)}\otimes \cdots \otimes h^q_{\gamma^q(k)}\otimes \mu(P^q_i)$$
which matches with the image $T_\wW$, completing the proof.
\end{proof}

\begin{remark}
In \cite{bohmann2024trace}, the authors use these trace invariants to conclude various interesting results regarding $K$-theory of certain polytope categories using the map $T_{\mM}\circ DTr$ thus showing such trace invariant computations to be powerful. Currently, we do not know if there are other trace invariants that can be extracted from the Hochschild complex associated to $\CC$.    
\end{remark}

\begin{remark}
    If we take $A$ to be the \emph{universal measure} $K_0(\CC)$ (\cite[Example 7.10]{bohmann2024trace}), then by \cite{Malkiewich2022OnHigherScissors} and \cite[Corollary 1.2]{bohmann2024trace}, $\tr$ is a rational isomorphism:
    $$K_n(\PP_G^X)\otimes \mathbb{Q} \cong H_i(G; K_0(\PP^{X}))\otimes \mathbb{Q} $$
    and thus $Dtr$ is a rational injection and $\tr_\mM$ is a rational surjection
    $$K_n(\PP_G^X)\otimes \mathbb{Q} 
    \rightarrowtail 
    HH_n(\bbZ\mM(\PP^{X})) \otimes \mathbb{Q} 
    \text{ and }
    HH_n(\bbZ\mM(\PP^{X})) \otimes \mathbb{Q} 
    \twoheadrightarrow 
    H_i(G; K_0(\PP^{X}))\otimes \mathbb{Q} .$$  
  From \cref{ex: HH of P_n} we can see that this is not a rational isomorphism as $K_0(\PP^n_{\bbZ\rtimes C_2}) \cong \bbZ$ (a scissors congruence class here is completely determined by length which is given by integer multiple of $1/n$) and $H_0(\bbZ\rtimes C_2;\bbZ) \cong \bbZ$ but $HH_0 \cong \bbZ^2$.
\end{remark}
\end{document}